\DeclareMathOperator{\Ker}{Ker}
\DeclareMathOperator{\Sym}{Sym}
\DeclareMathOperator{\ord}{ord}
\renewcommand\Re{\hbox{{\rm Re}}\,}
\newcommand{\abs}[1]{\lvert#1\rvert}
\newcommand{\Abs}[1]{\left\lvert#1\right\rvert}
\newcommand{\norm}[1]{\lVert#1\rVert}
\newcommand{\jap}[1]{\langle#1\rangle}
\newcommand{\hel}{\alpha}
\newcommand{\han}{\beta}
\newcommand{\bD}{{\mathbf D}}
\newcommand{\bc}{{\mathbf c}}
\newcommand{\ba}{{\mathbf a}}
\newcommand{\bbT}{{\mathbb T}}
\newcommand{\bbR}{{\mathbb R}}
\newcommand{\bbC}{{\mathbb C}}
\newcommand{\bbN}{{\mathbb N}}
\newcommand{\bbD}{{\mathbb D}}
\newcommand{\calN}{\mathcal{N}}
\newcommand{\scrB}{\mathscr{B}}
\numberwithin{equation}{section}
\theoremstyle{plain}
\newtheorem{theorem}{\bf Theorem}[section]
\newtheorem*{theorem*}{Theorem 1.1$'$}
\newtheorem{lemma}[theorem]{\bf Lemma}
\newtheorem{proposition}[theorem]{\bf Proposition}
\newtheorem{corollary}[theorem]{\bf Corollary}
\theoremstyle{definition}
\theoremstyle{remark}
\newtheorem*{remark*}{\bf Remark}
\newtheorem{remark}[theorem]{\bf Remark}
\newtheorem{example}[theorem]{\bf Example}
\newtheorem{question}[theorem]{\bf Question}
\newcommand{\wt}{\widetilde}
\newcommand{\eps}{\varepsilon}
\renewcommand{\[}{\begin{equation}}
\renewcommand{\]}{\end{equation}}
\DeclareMathOperator{\dom}{dom}
\begin{document} 
\title[Helson matrices]{On Helson matrices: moment problems, non-negativity, boundedness, and finite rank}
\date{\today} 

\author{Karl-Mikael Perfekt}
\address{Department of Mathematics and Statistics, 
  University of Reading, Reading RG6 6AX, United Kingdom}
\email{k.perfekt@reading.ac.uk}

\author{Alexander Pushnitski} 
\address{Department of Mathematics,
King's College London,
Strand, London WC2R 2LS,
United Kingdom}
\email{alexander.pushnitski@kcl.ac.uk}

\subjclass[2010]{47B32,47B35}

\begin{abstract}
We study Helson matrices (also known as multiplicative Hankel matrices), 
i.e. infinite matrices of the form $M(\hel) = \{\hel(nm)\}_{n,m=1}^\infty$, 
where $\hel$ is a sequence of complex numbers. 
Helson matrices are considered as linear operators on $\ell^2(\bbN)$. 
By interpreting Helson matrices as Hankel matrices in countably many variables we use the theory of multivariate moment problems to show that $M(\hel)$ is non-negative if and only if $\hel$ is the moment sequence of a measure $\mu$ on $\bbR^\infty$, assuming that $\hel$ does not grow too fast. 
We then characterize the non-negative bounded Helson matrices $M(\hel)$ as those where the corresponding  
moment measures $\mu$ are Carleson measures for the Hardy space of countably many variables. 
Finally, we give a complete description of the Helson matrices of finite rank, in parallel with the classical Kronecker theorem on Hankel matrices.
\end{abstract}

\maketitle
\section{Introduction}
\subsection{Overview}

Let $\{\hel(n)\}_{n=1}^\infty$ be a sequence of complex numbers.
We denote by $M(\hel)$ the (potentially unbounded) operator 
on $\ell^2(\bbN)$ with ``matrix entries'' $\{\hel(nm)\}_{n,m=1}^\infty$, 
i.e. the  $(n,m)$'th entry depends only on the product $nm$.  
Following \cite{Queffelec},
we will call $M(\hel)$ the \emph{Helson matrix}
corresponding to the sequence $\hel$, 
in honor of Henry Helson's initial investigations \cite{Helson06} and \cite{Helson10}. 

Recall that for a sequence of complex numbers $\{\han(j)\}_{j=0}^\infty$, the corresponding \emph{Hankel matrix}
is the operator on $\ell^2(\bbN_0)$ ($\bbN_0=\{0,1,2,\dots\}$) 
with the matrix entries $\{\han(j+k)\}_{j,k=0}^\infty$. We denote this 
Hankel matrix by $H(\han)$. 

Our aim with this paper is to compare the theory of Helson matrices, a theory in its infancy, 
with the well-established theory of Hankel matrices. 
More precisely, we will address the following questions for Helson matrices:
\begin{enumerate}[label=(\alph*)]
\item \label{q:bounded}
What is the class of all bounded $M(\hel)$?
\item \label{q:nonneg}
What is the class of all non-negative $M(\hel)$?
\item \label{q:finiterank}
What is the class of all $M(\hel)$ of finite rank?
\item \label{q:spectra}
Can one describe the spectra of $M(\hel)$ for some concrete sequences $\hel$?
\end{enumerate}
These questions are well understood for Hankel matrices $H(\han)$. 
In this context,
Question~\ref{q:bounded} is answered by Nehari's theorem, 
Question~\ref{q:nonneg} is answered by the solution to the Hamburger moment problem,
and Question~\ref{q:finiterank} is answered by Kronecker's theorem.  
As for Question~\ref{q:spectra}, many of its answers originate from the description of the spectrum of the Hilbert matrix 
$\{1/(1+j+k)\}_{j,k=0}^\infty$ given by M.~Rosenblum \cite{Rosenblum1,Rosenblum2}.
More details of these classical facts will appear throughout the paper, as our investigation of Helson matrices is guided by the analogy with Hankel matrices, and we have tried to make this analogy as explicit as possible.

To summarize the main findings of the paper, we will fully answer Question~\ref{q:finiterank} with a statement analogous to the classical Kronecker theorem. We will observe that Question~\ref{q:nonneg} is almost settled by the existing theory of multivariate moment problems, showing that when $\hel$ is polynomially bounded, then 
$M(\hel)$ is non-negative if and only if $\hel$ is the moment sequence of a positive measure on $\bbR^\infty$. 
To treat Question~\ref{q:bounded} we use this description of $\alpha$ to characterize the bounded \textit{non-negative} Helson matrices in terms of Carleson measures for the Hardy space $H^2(\bbT^\infty)$ of the infinite polytorus. However, such Carleson measures are notoriously difficult to understand, and we leave the topic with many remaining question marks. 
We will also discuss the spectral structure of a particular Helson matrix, known as the \emph{multiplicative Hilbert matrix}; 
this gives an answer to Question~\ref{q:spectra}.

\subsection{Helson matrices as Hankel matrices in infinitely many variables} \label{sec:prelim}
Observe that by restricting a Helson matrix $M(\hel)$ to indices of the form $n = p^j$ 
for some fixed prime $p$ and $j=0,1,\ldots$, one obtains a Hankel matrix $\{\hel(p^{j+k})\}_{j,k=0}^\infty$. 
To develop this further, we need some standard notation. Let $p = \{p_j\}_{j=1}^\infty$ be the monotone enumeration of the primes.
Every natural number $n$ can be written $n=p^\kappa$, where
 $\kappa=(\kappa_1,\kappa_2,\dots)$ is a  multi-index with all components $\kappa_j\in\bbN_0$ and 
\[
p^\kappa=\prod_{j=1}^\infty p_j^{\kappa_j}.
\label{a1}
\]
In this parameterisation $\kappa$ runs through the subset of 
$\bbN_0^\infty=\bbN_0\times\bbN_0\times\dots$
which consists of finite sequences: $\kappa_j=0$ for all sufficiently large $j$.
We will denote this subset by $\bbN_{0}^{(\infty)}$. 
Formula \eqref{a1} establishes a bijection between $\bbN$ and $\bbN_{0}^{(\infty)}$. 
We will sometimes write $\kappa=\kappa(n)$ or $n=n(\kappa)$ to express 
the relationship $n=p^\kappa$, 
allowing us to alternatively index the sequence $\{\hel(n)\}_{n=1}^\infty$ by $\kappa$.
Obviously, we can also view $\ell^2(\bbN)$ as $\ell^2(\bbN_{0}^{(\infty)})$.

Now let $n=p^\kappa$ and $m=p^{\kappa'}$ for two multi-indices $\kappa,\kappa'\in\bbN_{0}^{(\infty)}$. Then we have that
\[
\hel(nm)=\hel(p^\kappa p^{\kappa'})=\hel(p^{\kappa+\kappa'}), \quad \kappa,\kappa'\in\bbN_{0}^{(\infty)},
\label{a2}
\]
and so the Helson matrix $M(\hel)$ can be viewed as a Hankel matrix on infinitely many variables, 
i.e. as a Hankel matrix acting on $\ell^2(\bbN_{0}^{(\infty)})$. 
To make this clearer, let us consider an intermediate model, namely, Hankel matrices on finitely many variables.
Fix $d\in\bbN$ and let $\{\han(\kappa)\}_{\kappa\in\bbN_0^d}$ be a sequence of complex numbers. 
Then the Hankel matrix $H_d(\han)$ on $d$ variables is the linear operator
on $\ell^2(\bbN_0^d)$ formally given by 
$$
(H_d(\han)a)_\kappa=\sum_{\kappa'\in\bbN_0^d}\han(\kappa+\kappa')a_{\kappa'}, \quad \kappa\in\bbN_0^d,
$$
where $a=\{a_\kappa\}_{\kappa\in\bbN_0^d}\in\ell^2(\bbN_0^d)$. 
Thus, \eqref{a2} establishes a one-to-one correspondence between $M(\hel)$ and the analogue of $H_d(\han)$ for countably many variables ($d = \infty$).

\subsection{Hardy spaces} \label{sec:prelimhardy}

To treat Hankel and Helson matrices, we will make use of the Hardy spaces of the finite polytori $\bbT^d$, $d\in\bbN$, and of 
the countably infinite polytorus $\bbT^\infty=\prod_{j=1}^\infty \bbT$. 
For a variable $z = (z_1, z_2, \ldots)\in\bbT^d$ and for a multi-index $\kappa\in\bbN_0^d$
($\kappa\in\bbN_{0}^{(\infty)}$ if $d=\infty$) we use the same shorthand as in \eqref{a1}: 
$$
z^\kappa=\prod_{j} z_j^{\kappa_j}. 
$$ 
The Hardy space $H^2(\bbT^d)$  
consists of power series 
$f(z) = \sum_{\kappa} f_\kappa z^\kappa$ with the finite norm
$$
\|f\|_{H^2(\mathbb{T}^d)}^2 = \sum_{\kappa} | f_\kappa|^2 < \infty. 
$$
Here  the summation is over   $\kappa\in\bbN_0^d$ if $d<\infty$ and 
over $\kappa\in\bbN_{0}^{(\infty)}$ if $d=\infty$. 
Clearly, 
$H^2(\mathbb{T}^d)$ is a Hilbert space with the standard inner product
$$
\jap{f,g}_{H^2(\bbT^d)}=\sum_{\kappa} f_\kappa\overline{g_\kappa}.
$$
It can be naturally understood as a subspace of $L^2(\bbT^d)$, the space of square-integrable functions 
with respect to the Haar measure $dm_d$ on $\bbT^d$ \cite{CG86, HLS}. 
It is the subspace of $L^2(\bbT^d)$-functions whose
Fourier coefficients 
$f_\kappa$, $\kappa = (\kappa_1, \kappa_2, \ldots)$, are non-zero only if $\kappa_j \geq 0$ for every $j$. 
For this reason, $H^2(\mathbb{T}^\infty)$ is sometimes also known as the narrow cone.

An important fact is that each $f \in H^2(\mathbb{T}^d)$, $d<\infty$, defines a holomorphic function 
on $\mathbb{D}^d$. 
The reproducing kernel at the point $w \in \mathbb{D}^d$ is given by
\[ \label{eq:kdkernel}
K_d(z,w) = \prod_{j=1}^d \frac{1}{1-z_j \overline{w}_j} = 
\sum_{\kappa} z^\kappa \overline{w}^\kappa, \quad z \in \mathbb{D}^d.
\]
For $d=\infty$, in order for the holomorphic extension to make sense one must consider
it on  $\mathbb{D}^\infty \cap \ell^2(\bbN)$ \cite{CG86}. 
The reproducing kernel at the point $w \in \mathbb{D}^\infty \cap \ell^2(\bbN)$ is given by the same formula,
$$
K_\infty(z,w) = \prod_{j=1}^\infty \frac{1}{1-z_j \overline{w}_j} 
= 
\sum_{\kappa} z^\kappa \overline{w}^\kappa, \quad z \in \mathbb{D}^\infty \cap \ell^2(\bbN).
$$
A third name for $H^2(\bbT^\infty)$ is hence the Hardy space of the infinite polydisc.

\subsection{The structure of the paper} 
In Section~\ref{sec:bddnehari}, which contains no new results, we recall some background information
on the boundedness of Hankel and Helson matrices. We recall Nehari's description \cite{Nehari} of bounded Hankel matrices in one variable as well as the deep extension of this theorem to the multivariate setting due to Ferguson--Lacey--Terwilleger \cite{FergusonLacey, LaceyTerwilleger}. We also recall that for Helson matrices, the natural analogue of Nehari's theorem fails, as was demonstrated by Ortega-Cerd\`a and Seip \cite{OS}.

In Section~\ref{sec:nonneg} we discuss the connection of Hankel and Helson matrices to the 
theory of multivariate moment problems. 
The solution to the classical Hamburger moment problem says that the one-variable
Hankel matrix $H_1(\han)$ is non-negative (in the quadratic form sense)
if and only if it can be represented as the moment sequence of a positive measure on $\bbR$:
$$
\han(\kappa)=\int_\bbR t^\kappa d\mu(t), \quad \kappa\in\bbN_0.
$$
In Section~\ref{sec:nonneg}, we discuss the known multi-variable analogue of this statement 
for Hankel matrices $H_d(\han)$, $d<\infty$; in this case, $\beta$ is the moment sequence 
of a measure on $\bbR^d$. 
Further, under some growth 
restriction on $\hel$, we will give the (essentially known) result for Helson matrices: $M(\hel)$ is 
non-negative if and only if $\hel$ is a moment sequence for some measure $\mu$ on $\bbR^\infty$:
$$
\hel(n)=\int_{\bbR^\infty}t^\kappa d\mu(t), \quad n=p^\kappa. 
$$

In Section~\ref{sec:bounded} we use the moment sequence description of $\hel$ to characterize bounded Helson matrices in the special case when $M(\hel)$ is non-negative. In this case, we show that $M(\hel)$ is bounded if and only if the moment measure $\mu$ of $\hel$ is a Carleson measure for the Hardy space $H^2(\bbT^\infty)$, and this holds if and only if a certain integral operator is bounded on $L^2(\mu)$. Our result parallels Widom's \cite{Widom66} description of bounded non-negative Hankel matrices in a single variable. A notable difference is that the single variable problem has a natural extremal solution, namely, the Hilbert matrix. No such extremal exists for non-negative Helson matrices.
  
 In Section~\ref{sec:bohr} we consider a natural subclass of non-negative Helson matrices, inspired by the customary identification (the Bohr lift) between $H^2(\bbT^\infty)$ and the Hardy space $\mathscr{H}^2$ of Dirichlet series \cite{HLS}. For this subclass the description of boundedness can be taken further. 
In this context, we also discuss the spectral analysis of the \emph{multiplicative Hilbert matrix} 
\begin{equation}
\mathbf{H} \colon   \ell^2(\bbN_2) \to \ell^2(\bbN_2), \quad\bbN_2=\{2,3,4,\dots\},
\qquad 
\mathbf{H} = \left\{\frac{1}{\sqrt{nm}\log(nm)}\right\}_{n,m=2}^\infty,
\label{multH}
\end{equation}
which fits into this framework. 
A partial description of the spectrum of $\mathbf{H}$ was given in \cite{BPSSV}.
Here we report on further progress in this direction, while the details are given in a separate 
publication \cite{PP2}.  

In Sections~\ref{sec:finiterank} and \ref{sec:frproof} we characterize the finite rank Helson matrices. The material of Section~\ref{sec:nonneg} yields a description of the non-negative finite rank Helson matrices, but the general situation is significantly more difficult to handle. We are inspired by earlier results \cite{Pow82,JPR,Rochberg95}, but our setting requires further algebraic and analytic analysis.
As an algebraic question, the characterization of finite rank is asking for the study of finite-codimensional ideals in the commutative ring $\bbC[z_1, z_2, \ldots]$ of polynomials in countably many variables. 
The material of Sections~\ref{sec:finiterank} and \ref{sec:frproof} is hence based on basic commutative algebra to identify the correct algebraic description of finite rank Helson matrices. We then use the theory of $H^2(\bbT^\infty)$ to determine which of these yield densely defined (and hence bounded) Helson matrices. Loosely speaking, our result says that finite rank Helson matrices are generated by directional derivatives in a finite number of directions in $\bbC^\infty$  and by point evaluations. 

In the Appendix we study basic properties of measures $\mu$ on $\bbR^\infty$ which are Carleson for $H^2(\bbT^\infty)$ --- i.e. the measures that generate bounded non-negative Helson matrices. In addition to technical material needed for Section~\ref{sec:bounded}, we also give a proof that the natural reproducing kernel testing condition is insufficient to characterize the Carleson measures of this type.

\subsection{Acknowledgments}
We give special mention to Steffen Oppermann for providing and clarifying many of the algebraic details in Section~\ref{sec:finiterank}. We are also grateful to Ole Fredrik Brevig, William T. Sanders, and Herv\'e Queff\'elec for helpful discussions.

\section{Boundedness: Nehari's theorem} \label{sec:bddnehari}

\subsection{Bounded Hankel matrices} \label{sec:finitenehari}
First we recall Nehari's theorem in the finite variable setting. 
For $d\in\bbN$, let $\{\han(\kappa)\}_{\kappa\in\bbN_0^d}$ be a sequence of complex numbers
such that 
$$
b(z)=\sum_{\kappa\in\bbN_0^d} \han(\kappa) z^\kappa\in H^2(\bbT^d). 
$$
Consider the $d$-variable Hankel matrix $H_d(\han)$ on $\ell^2(\bbN_0^d)$; 
in this context $b$ is known as the \emph{analytic symbol} of $H(\han)$. 
For any polynomials of the form
$$
f(z)=\sum_{\kappa\in\bbN_0^d} \overline{f_\kappa} z^\kappa, \quad g(z)=\sum_{\kappa\in\bbN_0^d} g_\kappa z^\kappa,
$$
Parseval's theorem shows that
$$
\jap{H_d(\han)\{f\},\{g\}}_{\ell^2(\bbN_0^d)}
=
\jap{b,f g}_{H^2(\bbT^d)}.
$$
Now suppose that there exists $B\in L^\infty(\bbT^d)$ such that the Fourier 
coefficients $B_\kappa$ with  all components $\kappa_j\geq0$ 
coincide with $\han(\kappa)$: 
$$
\jap{B,z^\kappa}_{L^2(\bbT^d)}=\han(\kappa), \quad \kappa\in\bbN_0^d.
$$
In this case we say that $H_d(\han)$ has a \textit{bounded symbol} $B$. 
Since $\jap{b,fg}_{H^2(\bbT^d)}=\jap{B,fg}_{L^2(\bbT^d)}$ we then have that
$$
\abs{\jap{H_d(\han)\{f\},\{g\}}_{\ell^2(\bbN_0^d)}}
\leq 
\norm{B}_{L^\infty(\bbT^d)}\norm{f}_{H^2(\bbT^d)}\norm{g}_{H^2(\bbT^d)}.
$$
This gives the easy ``if'' part of the following theorem.
\begin{theorem}\cite{FergusonLacey,LaceyTerwilleger, Nehari}\label{thm.b1}
The Hankel operator $H_d(\han)$ is bounded on $\ell^2(\bbN_0^d)$ if and only if 
there exists $B\in L^\infty(\bbT^d)$ 
such that
\[
\jap{B,z^\kappa}_{L^2(\bbT^d)}=\han(\kappa), \quad \forall \kappa\in\bbN_0^d.
\label{b1}
\]
Further, there exists a constant $C_d$ such that
\[
\inf\{\norm{B}_{L^\infty(\bbT^d)}\colon  \eqref{b1} \; \mathrm{ holds}\}\leq C_d\norm{H_d(\han)}. 
\label{b2}
\]
When $d=1$, we have $C_1=1$. 
\end{theorem}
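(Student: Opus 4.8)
The plan is to reduce the boundedness of $H_d(\han)$ to a duality statement for the Hardy space $H^1(\bbT^d)$. The ``if'' direction is already established in the excerpt, so I concentrate on the ``only if'' direction together with the norm bound \eqref{b2}.

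First I would recast the Hankel operator as a bilinear form. As recorded above, for analytic polynomials $f,g$ one has $\jap{H_d(\han)\{f\},\{g\}}_{\ell^2(\bbN_0^d)}=\jap{b,fg}_{H^2(\bbT^d)}$, where $b(z)=\sum_\kappa \han(\kappa)z^\kappa$ is the analytic symbol. Boundedness of $H_d(\han)$ says precisely that the functional $\Lambda(fg):=\jap{b,fg}_{H^2(\bbT^d)}$, initially defined on products of $H^2$-functions, obeys $\abs{\Lambda(fg)}\leq\norm{H_d(\han)}\,\norm{f}_{H^2(\bbT^d)}\norm{g}_{H^2(\bbT^d)}$. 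Since $fg\in H^1(\bbT^d)$ and such products are dense there, the goal is to extend $\Lambda$ to a bounded functional on all of $H^1(\bbT^d)$. Viewing $H^1(\bbT^d)$ as a subspace of $L^1(\bbT^d)$, a Hahn--Banach extension to $L^1(\bbT^d)$ together with the identification $(L^1(\bbT^d))^*=L^\infty(\bbT^d)$ then produces a symbol $B\in L^\infty(\bbT^d)$ with $\norm{B}_{L^\infty(\bbT^d)}=\norm{\Lambda}_{(H^1)^*}$; testing $B$ against the monomials $z^\kappa$, on which $\Lambda$ agrees with $b$, yields $\jap{B,z^\kappa}_{L^2(\bbT^d)}=\han(\kappa)$, which is \eqref{b1}.

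The crux, and the main obstacle, is to control $\norm{\Lambda}_{(H^1)^*}$, i.e.\ to bound $\abs{\Lambda(h)}$ for a general $h\in H^1(\bbT^d)$ and not merely for a single product. This requires the \emph{weak factorization} of $H^1(\bbT^d)$: every $h\in H^1(\bbT^d)$ admits a representation $h=\sum_k f_k g_k$ with $f_k,g_k\in H^2(\bbT^d)$ and $\sum_k\norm{f_k}_{H^2(\bbT^d)}\norm{g_k}_{H^2(\bbT^d)}\leq C_d\norm{h}_{H^1(\bbT^d)}$. Granting this, one sets $\Lambda(h)=\sum_k\jap{b,f_kg_k}_{H^2(\bbT^d)}$ and estimates $\abs{\Lambda(h)}\leq\norm{H_d(\han)}\sum_k\norm{f_k}_{H^2(\bbT^d)}\norm{g_k}_{H^2(\bbT^d)}\leq C_d\norm{H_d(\han)}\norm{h}_{H^1(\bbT^d)}$, so that $\norm{\Lambda}_{(H^1)^*}\leq C_d\norm{H_d(\han)}$ and \eqref{b2} follows.

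For $d=1$ the weak factorization degenerates to the classical \emph{strong} factorization: using the inner--outer factorization, every $h\in H^1(\bbT)$ is a single product $h=fg$ with $f,g\in H^2(\bbT)$ and $\norm{f}_{H^2(\bbT)}\norm{g}_{H^2(\bbT)}=\norm{h}_{H^1(\bbT)}$, giving the sharp constant $C_1=1$. The genuine difficulty lies in the range $d\geq2$, where strong factorization fails and one must establish the weak version with a constant depending on $d$. This is the deep content of the Ferguson--Lacey and Lacey--Terwilleger theorems and is equivalent, via the Chang--Fefferman theory, to the duality between the multiparameter Hardy space $H^1(\bbT^d)$ and product BMO; its proof rests on Journ\'e's covering lemma and delicate commutator estimates. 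I would invoke this factorization as a black box rather than reprove it.
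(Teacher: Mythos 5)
Your proposal is correct and follows essentially the same route the paper indicates: the bilinear form identity $\jap{H_d(\han)\{f\},\{g\}}=\jap{b,fg}_{H^2(\bbT^d)}$, a Hahn--Banach extension of the resulting functional on $H^1(\bbT^d)$ using $(L^1)^*=L^\infty$, strong factorization $u=fg$ with $\norm{u}_{H^1}=\norm{f}_{H^2}\norm{g}_{H^2}$ to get $C_1=1$ when $d=1$, and the Ferguson--Lacey and Lacey--Terwilleger weak factorization $u=\sum_n f_n g_n$ with $\sum_n\norm{f_n}_{H^2}\norm{g_n}_{H^2}\leq C_d'\norm{u}_{H^1}$, invoked as a black box, for $d\geq 2$. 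The paper likewise treats the weak factorization as the deep imported ingredient rather than reproving it, so there is no substantive difference.
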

When $d=1$, this is the classical Nehari theorem \cite{Nehari}. 
The ``only if'' part for $d=1$ and the bound \eqref{b2} 
follow from the Hahn-Banach theorem and the possibility to factorize any $H^1$-function 
$u$  into a product of two $H^2$-functions: $u=fg$ with $\norm{u}_{H^1}=\norm{f}_{H^2}\norm{g}_{H^2}$. 

For $d=2$ this is a result of Ferguson--Lacey \cite{FergusonLacey}
and for $d>2$ of Lacey--Terwilleger \cite{LaceyTerwilleger}. 
For $d>1$, the strategy of the proof is similar, but instead of the factorization $u=fg$ one has a ``weak factorization" 
$u=\sum_n f_ng_n$ with the estimate 
$$
\sum_n\norm{f_n}_{H^2}\norm{g_n}_{H^2}\leq C_d' \norm{u}_{H^1}.
$$
It is this estimate that requires all the effort in \cite{FergusonLacey,LaceyTerwilleger}.

\subsection{Bounded Helson matrices}
By exactly the same argument as above, we see that the analogue of the easy part of Theorem~\ref{thm.b1}
holds for Helson matrices: if $M(\hel)$ has a \textit{bounded symbol}, that is, if there exists $A\in L^\infty(\bbT^\infty)$ such that 
\begin{equation}
\jap{A,z^\kappa}_{L^2(\bbT^\infty)}=\hel(p^\kappa), \quad \forall \kappa\in\bbN_{0}^{(\infty)},
\label{nehari}
\end{equation}
then $M(\hel)$ is bounded. 
However, not every bounded Helson matrix has a bounded symbol; this was proven 
by Ortega-Cerd\`a and Seip \cite{OS} who observed that for the best constant $C_d$ in \eqref{b2}, 
one has a lower bound
$$
C_d\geq (\pi^2/8)^{d/4}
$$
for all even $d\geq2$. By a standard argument with the closed graph theorem (\cite[Lemma 1]{BP15}),
one obtains
\begin{theorem}\cite{OS}\label{thm.b2}
There exists  a bounded Helson matrix $M(\hel)$ such that there is no $A\in L^\infty(\bbT^\infty)$ 
with the property \eqref{nehari}.
\end{theorem}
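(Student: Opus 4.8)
The plan is to argue by contradiction, converting the quantitative failure of $d$-variable Nehari's theorem as $d\to\infty$ into the \emph{qualitative} failure of Nehari's theorem for Helson matrices. Suppose, contrary to the assertion, that \emph{every} bounded Helson matrix $M(\hel)$ admits a bounded symbol $A\in L^\infty(\bbT^\infty)$ satisfying \eqref{nehari}. I will show that this forces the constants $C_d$ of Theorem~\ref{thm.b1} to be bounded uniformly in $d$, which is incompatible with the Ortega-Cerd\`a--Seip lower bound $C_d\geq(\pi^2/8)^{d/4}$ for even $d$.

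First I would record that the set $\calH$ of all bounded Helson matrices is a Banach space in the operator norm: it is a linear subspace of the bounded operators on $\ell^2(\bbN)$, and it is norm-closed since the entry $\jap{M(\hel)e_m,e_n}=\hel(nm)$ depends only on the product $nm$, a property preserved by norm limits. The easy direction of Nehari's theorem (the computation preceding \eqref{nehari}) shows that the assignment $\Phi\colon A\mapsto M(\hel_A)$, with $\hel_A(p^\kappa)=\jap{A,z^\kappa}_{L^2(\bbT^\infty)}$, is a bounded linear map $\Phi\colon L^\infty(\bbT^\infty)\to\calH$ with $\norm{\Phi}\leq1$. Under the contradiction hypothesis $\Phi$ is surjective, so the induced bijection $L^\infty(\bbT^\infty)/\ker\Phi\to\calH$ is a bounded bijection of Banach spaces; by the open mapping theorem its inverse is bounded. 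Equivalently, as packaged in \cite[Lemma~1]{BP15}, the symbol assignment has closed graph. Either way one obtains a single constant $C$ such that every bounded Helson matrix $M(\hel)$ has a symbol $A$ with $\norm{A}_{L^\infty(\bbT^\infty)}\leq C\norm{M(\hel)}$.

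The final step is to feed finite-variable Hankel matrices into this uniform bound. Fix $d$ and a sequence $\han$ on $\bbN_0^d$ for which $H_d(\han)$ is bounded, and define $\hel$ by $\hel(p^\kappa)=\han(\kappa)$ for $\kappa\in\bbN_0^d$ and $\hel(p^\kappa)=0$ otherwise, i.e.\ $\hel$ is supported on integers built from the first $d$ primes. Since $\hel(p^{\lambda+\mu})\neq0$ forces $\lambda_j=\mu_j=0$ for all $j>d$, the Helson matrix splits orthogonally as $M(\hel)=H_d(\han)\oplus0$ with respect to $\ell^2(\bbN_{0}^{(\infty)})=\ell^2(\bbN_0^d)\oplus\calK$, where $\calK$ is spanned by the $e_\kappa$ with $\kappa_j>0$ for some $j>d$; hence $\norm{M(\hel)}=\norm{H_d(\han)}$. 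The symbol $A$ produced above satisfies $\jap{A,z^\kappa}_{L^2(\bbT^\infty)}=\han(\kappa)$ for all $\kappa\in\bbN_0^d$. Letting $B\in L^\infty(\bbT^d)$ be the conditional expectation of $A$ onto the first $d$ coordinates (integrating out $z_{d+1},z_{d+2},\dots$), which is a contraction on $L^\infty$, one gets $\norm{B}_{L^\infty(\bbT^d)}\leq\norm{A}_{L^\infty(\bbT^\infty)}$ and $\jap{B,z^\kappa}_{L^2(\bbT^d)}=\han(\kappa)$ for all $\kappa\in\bbN_0^d$. Thus $B$ is a bounded symbol for $H_d(\han)$ in the sense of \eqref{b1}, and \eqref{b2} yields $C_d\leq C$ for every $d$ --- the desired contradiction.

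The genuinely hard part is not contained in this excerpt: it is the lower bound $C_d\geq(\pi^2/8)^{d/4}$, the substantive contribution of \cite{OS}, which I take as given. Everything else is the soft functional-analytic transfer described above. Within that transfer the one point demanding care is the reduction to finitely many variables, namely verifying both the orthogonal splitting $M(\hel)=H_d(\han)\oplus0$ and that conditional expectation produces a genuine finite-variable symbol without increasing the $L^\infty$ norm, since this is exactly what converts the abstract constant $C$ into a uniform upper bound for each individual $C_d$.
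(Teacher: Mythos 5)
Your proof is correct and is essentially the paper's own argument: the paper likewise derives the theorem by combining the Ortega-Cerd\`a--Seip lower bound $C_d\geq(\pi^2/8)^{d/4}$ with the soft closed-graph/open-mapping transfer it delegates to \cite[Lemma 1]{BP15}, which is exactly what you have written out. Your explicit verification of the two routine points that lemma hides --- the isometric splitting $M(\hel)=H_d(\han)\oplus 0$ for sequences supported on the first $d$ primes, and the norm-contractive conditional expectation producing a $d$-variable symbol --- is accurate and complete.
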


To the best of our knowledge, no explicit examples of this kind are available.

\section{Non-negativity and moment sequences} \label{sec:nonneg}

\subsection{Non-negative Hankel matrices}
A $d$-variable ($d<\infty$) Hankel matrix $H_d(\han)$ is non-negative ($H_d(\han)\geq0$), if for any finite
sequence $\{f_\kappa\}_{\kappa\in\bbN_0^d}$  we have
$$
\sum_{\kappa,\kappa'\in\bbN_0^d}\han(\kappa+\kappa')f_\kappa\overline{f_{\kappa'}}\geq0.
$$
For $\tau\in\bbN_0^d$, we will 
denote by $\han(\tau+\cdot)$ the shifted sequence $\{\han(\tau+\kappa)\}_{\kappa\in\bbN_0}$.

For $d=1$, the description of all non-negative Hankel matrices is given by the following 
classical solution to the moment problem: 
\begin{theorem}\label{thm:hankmoment}
Let $\han=\{\han(\kappa)\}_{\kappa=0}^\infty$ be a sequence of complex numbers. 
\begin{enumerate}[label={\rm ({\roman*})}]
\item 
$H_1(\han)$ is non-negative if and only if there exists a measure $\mu\geq0$ on $\bbR$ 
such that $t^\kappa\in L^1(\mu)$ for all $\kappa\in\bbN_0$ and 
\[
\han(\kappa)=\int_\bbR t^\kappa \, d\mu(t), \quad \kappa\in\bbN_0.
\label{hamb}
\]
\item  
We have $H_1(\han)\geq0$ and $H_1(\han(1+\cdot))\geq0$ 
if and only if there exists a measure $\mu\geq0$
as above supported on $[0,\infty)$. 
\end{enumerate}
\end{theorem}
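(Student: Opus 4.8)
The plan is to handle the ``if'' direction by direct computation and the ``only if'' direction through the operator-theoretic solution of the Hamburger and Stieltjes moment problems. For the ``if'' part of (i), suppose $\han(\kappa)=\int_\bbR t^\kappa\,d\mu(t)$ with $\mu\geq0$. Then for any finite sequence $\{f_\kappa\}$,
$$
\sum_{\kappa,\kappa'}\han(\kappa+\kappa')f_\kappa\overline{f_{\kappa'}}
=\int_\bbR\Babs{\sum_\kappa f_\kappa t^\kappa}^2 d\mu(t)\geq0,
$$
so $H_1(\han)\geq0$. For (ii), if moreover $\supp\mu\subseteq[0,\infty)$, the same identity applied to the non-negative measure $t\,d\mu(t)$ gives $H_1(\han(1+\cdot))\geq0$, since $\han(1+\kappa)=\int_\bbR t^\kappa\,(t\,d\mu(t))$.

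For the ``only if'' direction I would first observe that non-negativity forces the matrix to be Hermitian, whence $\han(m)=\overline{\han(m)}$ for every $m$; thus $\han$ is real. On the space $\calP=\bbC[t]$ of polynomials introduce the non-negative sesquilinear form
$$
\jap{p,q}_\han=\sum_{\kappa,\kappa'}\han(\kappa+\kappa')p_\kappa\overline{q_{\kappa'}},
\qquad p=\sum_\kappa p_\kappa t^\kappa,\ q=\sum_\kappa q_\kappa t^\kappa,
$$
which is non-negative precisely because $H_1(\han)\geq0$. The decisive structural point is that the Hankel property makes multiplication by $t$ \emph{symmetric} for this form, $\jap{tp,q}_\han=\jap{p,tq}_\han$, both sides equalling $\sum\han(\kappa+\kappa'+1)p_\kappa\overline{q_{\kappa'}}$. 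Quotienting by the null space and completing, I obtain a Hilbert space $\calH$ on which multiplication by $t$ descends to a densely defined symmetric operator $A$, with $\jap{A^\kappa 1,1}_\han=\han(\kappa)$ by construction.

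The heart of the argument is to produce a self-adjoint extension of $A$ and apply the spectral theorem. Since $\han$ is real, the coefficientwise conjugation $J\colon\sum p_\kappa t^\kappa\mapsto\sum\overline{p_\kappa}t^\kappa$ is an antiunitary involution on $\calH$ commuting with $A$; by von Neumann's theorem $A$ therefore has equal deficiency indices and admits a self-adjoint extension $\wt A$ on some $\wt\calH\supseteq\calH$. Letting $E$ be the spectral measure of $\wt A$ and $\xi$ the image of the constant $1$, set $\mu(\cdot)=\jap{E(\cdot)\xi,\xi}_{\wt\calH}\geq0$. As the powers of $1$ remain polynomials, hence lie in $\Dom(A^\kappa)$ and are fixed by the extension, the spectral theorem yields
$$
\han(\kappa)=\jap{A^\kappa 1,1}_\han=\jap{\wt A^\kappa\xi,\xi}_{\wt\calH}=\int_\bbR t^\kappa\,d\mu(t),
$$
proving (i). For (ii), the extra hypothesis $H_1(\han(1+\cdot))\geq0$ says exactly that $\jap{Ap,p}_\han\geq0$, i.e. $A\geq0$; I would then replace the generic extension by a \emph{non-negative} self-adjoint extension (the Friedrichs or Krein--von Neumann extension), whose spectrum lies in $[0,\infty)$, so that its spectral measure gives $\mu$ with $\supp\mu\subseteq[0,\infty)$.

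The main obstacle is precisely the extension step: passing from the symmetric $A$ to a self-adjoint $\wt A$, and for (ii) to a non-negative one, is where the genuine content of the moment problem resides. I expect the algebraic identities and the spectral bookkeeping to be routine; the care lies in checking that conjugation-symmetry gives equal deficiency indices and in selecting an extension that preserves non-negativity. Uniqueness of $\mu$ is not asserted and need not be addressed, which is fortunate, since the Hamburger problem may be indeterminate.
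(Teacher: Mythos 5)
Your proof is correct and follows precisely the route the paper itself endorses: the paper offers no proof of this classical theorem, remarking only that the ``if'' parts are immediate and that the ``only if'' parts follow from von Neumann's theory of self-adjoint extensions together with the spectral theorem (citing Peller, Theorem 7.1). Your direct computation for the ``if'' direction, the GNS-type construction with multiplication by $t$ symmetric, the conjugation-symmetry argument for equal deficiency indices, and the Friedrichs (or Krein--von Neumann) extension to keep the Stieltjes case non-negative are exactly that standard argument, correctly executed.
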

Part (i) is known as the Hamburger moment problem and part (ii) as the Stieltjes moment problem. 
The ``if'' parts are immediate. 
The ``only if'' parts are harder; modern proofs usually rely on von Neumann's theory of self-adjoint extensions 
and on the spectral theorem for self-adjoint operators
(see e.g. \cite[Theorem 7.1]{Peller}). 

It follows immediately from part (ii) that 
$$
H_1(\han)\geq0 \text{ and }H_1(\han(1+\cdot))\geq0
\quad\Rightarrow\quad
H_1(\han(\kappa+\cdot))\geq0\quad \forall\kappa\geq0.
$$ 
This is easy to see directly;  indeed, by considering sub-matrices, 
one checks that $H_1(\han)\geq0$ implies $H_1(\han(2\kappa+\cdot))\geq0$ for
all $\kappa$, and that $H_1(\han(1+\cdot))\geq0$ implies that $H_1(\han(1+2\kappa+\cdot))\geq0$ for all $\kappa$.

The uniqueness of the measure $\mu$ in \eqref{hamb} is a more difficult question. We do not 
discuss it here but refer the reader to \cite{Sim98}, for example. 
For our purposes it suffices to say that 
the measure $\mu$  is unique if the moment sequence $\han(\kappa)$ does not grow extremely fast. 

In the higher-dimensional case $d>1$, the moment problem is considerably more subtle. 
Let $\mu$ be a measure on $\bbR^d$ such that $t^\kappa\in L^1(\mu)$ for all multi-indices $\kappa\in\bbN_0^d$, 
and let  $\han=\{\han(\kappa)\}_{\kappa\in\bbN_0^d}$ be the sequence of moments of $\mu$,
\[
\han(\kappa)=\int_{\bbR^d}t^\kappa \, d\mu(t), \quad \kappa\in\bbN_0^d.
\label{c2}
\]
Then it is evident that $H_d(\han)\geq0$. However, there are sequences $\han$ such that $H_d(\han)\geq0$,
yet $\han$ cannot be represented as a moment sequence of any measure; see \cite[Section 6.3]{BCR}. 
The solvability of the moment problem can be guaranteed under some additional conditions
on the growth rate of the sequence $\han$. 
We give only the simplest theorem of this kind and refer to the extensive literature 
\cite{Ber68,BCR, Infusino, PutinarSchmudgen, Samo91, Vasilescu} for further details. 

\begin{theorem}
Let $d\in\bbN$ and let  $\han=\{\han(\kappa)\}_{\kappa\in\bbN_0^d}$ be a sequence such that for some $C>0$, we have
\[
\abs{\han(\kappa)}\leq C^{\kappa_1+\cdots+\kappa_d}\kappa_1^{\kappa_1}\cdots \kappa_d^{\kappa_d}, 
\quad \forall \kappa\in\bbN_0^d. 
\label{c3}
\]
\begin{enumerate}[label={\rm ({\roman*})}]
\item
$H_d(\han)\geq0$ if and only if there is a positive measure $\mu$ on $\bbR^d$ such 
that for all multi-indices $\kappa\in\bbN_0^d$, one has $t^\kappa\in L^1(\mu)$ and \eqref{c2} holds;
in this case $\mu$ is unique. 
\item
We have $H_d(\han(\kappa+\cdot))\geq0$ for all $\kappa\in\bbN_0^d$ if and only if $\mu$ is supported
on $[0,\infty)^d$. 
\end{enumerate}
\end{theorem}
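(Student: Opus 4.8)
The plan is to dispose of the two ``if'' directions first, since they are immediate, and then to reduce the two ``only if'' statements to the operator-theoretic solution of the moment problem under a Carleman-type condition, for which the growth bound \eqref{c3} is precisely calibrated. For the ``if'' direction of (i), if $\han$ is represented by $\mu$ as in \eqref{c2}, then for any finitely supported $\{f_\kappa\}$ one has
$$
\sum_{\kappa,\kappa'\in\bbN_0^d}\han(\kappa+\kappa')f_\kappa\overline{f_{\kappa'}}
=\int_{\bbR^d}\Bigl|\sum_{\kappa}f_\kappa t^\kappa\Bigr|^2\,d\mu(t)\geq0,
$$
so $H_d(\han)\geq0$; if in addition $\supp\mu\subseteq[0,\infty)^d$, then passing from $\han$ to $\han(\tau+\cdot)$ simply inserts the factor $t^\tau\geq0$ under the integral, which gives the ``if'' direction of (ii).

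For the ``only if'' direction of (i) I would run the Gelfand--Naimark--Segal construction. (Note first that $H_d(\han)\geq0$ already forces $\han$ to be real-valued, since the Hankel form is symmetric.) On the space of finitely supported sequences, equip the standard basis vectors $\delta_\kappa$ with the semi-inner product $\jap{\delta_\kappa,\delta_{\kappa'}}=\han(\kappa+\kappa')$; quotienting by the null space and completing yields a Hilbert space $\calH$ with a cyclic vector $\xi_0=[\delta_0]$. The coordinate shifts $X_j\colon\delta_\kappa\mapsto\delta_{\kappa+e_j}$ descend to densely defined operators on $\calH$ that are symmetric, $\jap{X_j\delta_\kappa,\delta_{\kappa'}}=\han(\kappa+e_j+\kappa')=\jap{\delta_\kappa,X_j\delta_{\kappa'}}$, and pairwise commute on their common invariant core. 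Here \eqref{c3} supplies a Carleman bound in every coordinate direction: since $\norm{X_j^n\xi_0}^2=\jap{X_j^{2n}\xi_0,\xi_0}=\han(2ne_j)\leq C^{2n}(2n)^{2n}$, we obtain $\sum_n\norm{X_j^n\xi_0}^{-1/n}\geq\sum_n(2Cn)^{-1}=\infty$ for each $j$. By the multivariate moment theorem for commuting symmetric operators with a cyclic vector that is quasi-analytic in each direction (see \cite{BCR,PutinarSchmudgen}), the closures $\overline{X_1},\dots,\overline{X_d}$ are self-adjoint and strongly commute, hence admit a joint spectral measure $E$ on $\bbR^d$. Setting $\mu(\Omega)=\jap{E(\Omega)\xi_0,\xi_0}$ then gives $\int_{\bbR^d}t^\kappa\,d\mu=\jap{\overline{X}^\kappa\xi_0,\xi_0}=\han(\kappa)$, which is \eqref{c2}; determinacy of the Carleman problem yields the uniqueness of $\mu$.

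For the ``only if'' direction of (ii) I would feed the extra hypothesis into the same construction. The assumption $H_d(\han(e_j+\cdot))\geq0$ says precisely that $\jap{X_j\xi,\xi}\geq0$ for every $\xi$ in the core, i.e. each symmetric $X_j$ is non-negative. Its self-adjoint closure is then a non-negative operator with spectrum contained in $[0,\infty)$, so the joint spectral measure $E$ --- and hence $\mu$ --- is supported on $[0,\infty)^d$.

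The main obstacle is the assertion that the formally commuting symmetric operators $X_1,\dots,X_d$ have \emph{strongly} commuting self-adjoint closures: commutation on a common core does not in general entail commutation of the spectral resolutions, and it is exactly here that \eqref{c3}, through the one-dimensional Carleman conditions derived above, must be used to exclude the pathologies responsible for the unsolvability of the unrestricted multivariate moment problem. Once that step is granted, the remainder is routine bookkeeping with the GNS space and the multidimensional spectral theorem.
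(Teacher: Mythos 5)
Your argument is correct, and it is worth noting at the outset that the paper itself offers no proof of this finite-dimensional theorem --- it is stated with references to \cite{Ber68,BCR,Samo91} --- so the right comparison is with the sketch the paper gives for the infinite-variable analogue, Theorem~\ref{thm:moment}. Your construction matches that sketch's architecture exactly: GNS completion of the polynomials under the Hankel form, commuting symmetric coordinate operators with cyclic vector $\xi_0$ (your shifts $X_j$ are the paper's multiplication operators $A_j$ under the identification $\delta_\kappa \leftrightarrow t^\kappa$), essential self-adjointness, the joint spectral measure, and non-negativity of each coordinate operator for part (ii). The one genuine divergence is the mechanism securing essential self-adjointness and strong commutativity --- the step you rightly single out as the crux. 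You extract the Carleman condition $\sum_n\norm{X_j^n\xi_0}^{-1/n}\geq\sum_n(2Cn)^{-1}=\infty$ directly from \eqref{c3} and invoke a Nussbaum-type quasi-analytic-vector theorem; the paper instead observes that \eqref{c3} makes each one-dimensional directional section of the moment problem determinate (quoting \cite{Sim98}) and then applies Devinatz's argument \cite{Dev57} to convert determinacy of these sections into essential self-adjointness, before citing \cite{Samo91} for the joint spectral theorem. The two routes are close cousins (determinacy under Carleman is itself proved via quasi-analyticity), but yours is more self-contained at the operator level, while the paper's reduces the multivariate problem to quotable univariate determinacy results.

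One imprecision to repair: the quasi-analytic-vector theorems you appeal to require a \emph{total set} of vectors quasi-analytic for each $X_j$, not merely a quasi-analytic cyclic vector --- quasi-analyticity does not automatically propagate from $\xi_0$ to $p(X)\xi_0$, so as stated your hypothesis is weaker than what the theorem demands. Fortunately \eqref{c3} is a pointwise bound on all moments, so the identical computation closes the gap: for every fixed $\kappa$,
$$
\norm{X_j^n[\delta_\kappa]}^2=\han(2\kappa+2ne_j)\leq C^{2\abs{\kappa}+2n}\Bigl(\prod_{i\neq j}(2\kappa_i)^{2\kappa_i}\Bigr)(2\kappa_j+2n)^{2\kappa_j+2n},
$$
whence $\norm{X_j^n[\delta_\kappa]}^{-1/n}\geq c_\kappa/n$ for all large $n$ and the Carleman sum diverges for every basis vector $[\delta_\kappa]$; these vectors are total in $\calH$, so the theorem applies. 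With that supplement the proof is complete: the remaining steps (reality of $\han$ from positivity of the form, the identification of $\jap{X_jf,f}\geq0$ with $H_d(\han(e_j+\cdot))\geq0$, support of the joint spectral measure in $[0,\infty)^d$ from non-negativity of the self-adjoint closures, and uniqueness from determinacy) are all sound.
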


\subsection{Non-negative Helson matrices}\label{sec.c2}

Generalizing the above, we will say that a Helson matrix $M(\hel)$ is non-negative ($M(\hel)\geq0$), if for any 
finite sequence $\{f_n\}_{n\geq1}$  we have
$$
\sum_{n,m\geq1}\hel(nm)f_n\overline{f_m}\geq0.
$$
For $\ell\in\bbN$, we will 
denote by $\hel(\ell\ \cdot)$ the ``multiplicatively shifted" sequence $\{\hel(\ell n)\}_{n=1}^\infty$.

Below we consider positive measures 
$\mu$ on $\bbR^\infty=\{t=(t_1,t_2,\dots) \, : \, t_j\in\bbR\}$
equipped with the Borel sigma-algebra of the product topology of $\bbR^\infty$, i.e.
the sigma-algebra generated by Borel cylinder sets
$$
\{t\in\bbR^\infty \, : \, t_j\in B_j, j=1,2,\dots,N\},
$$
where $N$ is finite and $B_j$ are Borel sets in $\bbR$. 

The theorem below is a combination of results existing in the literature. We give some comments below. 
\begin{theorem}\cite{Ber68, BCR, Samo91} \label{thm:moment}
Let $\hel=\{\hel(n)\}_{n=1}^\infty$ be a sequence of complex numbers which satisfies 
the bound $\hel(n)=O(n^a)$, $n\to\infty$, for some finite $a>0$. Then
\begin{enumerate}[label={\rm ({\roman*})}]
\item
$M(\hel)\geq0$  if and only if there exists a measure $\mu\geq0$ on $\bbR^\infty$ 
such that $t^\kappa\in L^1(\mu)$ for all  multi-indices $\kappa\in\bbN_{0}^{(\infty)}$ and 
\[
\hel(n)=\int_{\bbR^\infty} t^{\kappa}\, d\mu(t), \quad n=p^\kappa.
\label{eq:moment}
\]
In this case the measure $\mu$ is unique.
\item  
We have $M(\hel)\geq0$ and 
$M(\hel(p\ \cdot))\geq0$ for all primes $p$ if and only if there exists a measure $\mu\geq0$
as above, supported on $[0,\infty)^\infty$. 
\end{enumerate}
\end{theorem}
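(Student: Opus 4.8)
The plan is to establish the easy ``if'' directions by a direct quadratic-form computation and to obtain the harder ``only if'' directions by realising $\mu$ as a projective limit of the finite-dimensional moment measures supplied by the preceding $d$-variable moment theorem, with determinacy (forced by the growth hypothesis) as the decisive ingredient.

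For the ``if'' direction of (i), suppose \eqref{eq:moment} holds. Using $\kappa(nm)=\kappa(n)+\kappa(m)$, for any finitely supported sequence $\{f_n\}$ we get
\[
\sum_{n,m\geq1}\hel(nm)f_n\overline{f_m}=\int_{\bbR^\infty}\Abs{\sum_{n\geq1}f_n t^{\kappa(n)}}^2 d\mu(t)\geq0,
\]
so $M(\hel)\geq0$. If moreover $\mu$ is supported on $[0,\infty)^\infty$, the same computation applied to the shifted sequence $\hel(p_j\ \cdot)$ inserts the nonnegative factor $t_j$ into the integrand, giving $M(\hel(p_j\ \cdot))\geq0$; this settles the ``if'' direction of (ii).

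For the ``only if'' direction of (i), fix $d\in\bbN$ and set $\hel^{(d)}(\kappa)=\hel(p^\kappa)$ for $\kappa\in\bbN_0^d$, identifying $\kappa$ with its zero-padding in $\bbN_{0}^{(\infty)}$. Restricting the quadratic form of $M(\hel)$ to sequences supported on $\{n=p^\kappa:\kappa\in\bbN_0^d\}$ shows that $M(\hel)\geq0$ implies $H_d(\hel^{(d)})\geq0$. The hypothesis $\hel(n)=O(n^a)$ gives the exponential bound $\abs{\hel^{(d)}(\kappa)}\leq C\prod_{j=1}^d p_j^{a\kappa_j}$, which satisfies the growth condition \eqref{c3} with a constant depending on $d$, since the factors $\kappa_j^{\kappa_j}$ dominate any fixed exponential. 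Hence the $d$-variable moment theorem applies and yields a \emph{unique} positive measure $\mu_d$ on $\bbR^d$ with moments $\hel^{(d)}$. The crucial point is consistency under the coordinate projections $\pi_d\colon\bbR^{d+1}\to\bbR^d$: for $\kappa\in\bbN_0^d$ the measure $(\pi_d)_*\mu_{d+1}$ has $\kappa$-th moment $\int_{\bbR^{d+1}}t^{(\kappa,0)}d\mu_{d+1}=\hel^{(d)}(\kappa)$, the same as $\mu_d$, so the uniqueness clause forces $(\pi_d)_*\mu_{d+1}=\mu_d$. Thus $\{\mu_d\}$ is a consistent family of common total mass $\hel(1)$. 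Excluding the trivial case $\hel(1)=0$ (where $M(\hel)\geq0$ forces $\hel\equiv0$ and $\mu=0$), we normalise to probability measures and apply the Kolmogorov extension theorem to obtain a positive measure $\mu$ on $\bbR^\infty$ with marginals $\mu_d$. As every $\kappa\in\bbN_{0}^{(\infty)}$ involves only finitely many coordinates, $t^\kappa\in L^1(\mu)$ and $\int_{\bbR^\infty}t^\kappa d\mu=\int_{\bbR^d}t^\kappa d\mu_d=\hel(p^\kappa)$, which is \eqref{eq:moment}. Uniqueness of $\mu$ follows the same way: two solutions share all finite-dimensional moments, hence have equal marginals by finite-dimensional determinacy, hence coincide, since a measure on $\bbR^\infty$ is determined by its marginals.

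For the ``only if'' direction of (ii), fix a prime $p_j$ and restrict both $M(\hel)\geq0$ and $M(\hel(p_j\ \cdot))\geq0$ to sequences supported on the powers $\{p_j^l:l\geq0\}$. Writing $\gamma(l)=\hel(p_j^l)$, this gives $H_1(\gamma)\geq0$ and $H_1(\gamma(1+\cdot))\geq0$. Since $\gamma$ grows exponentially it is determinate, so its unique representing measure --- necessarily the marginal of $\mu$ onto the coordinate $t_j$ --- is, by Theorem~\ref{thm:hankmoment}(ii), supported on $[0,\infty)$. Therefore $\mu(\{t_j<0\})=0$ for every $j$, i.e. $\supp\mu\subseteq[0,\infty)^\infty$. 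The main obstacle throughout is the consistency step of the projective limit, and it is precisely there that the restriction $\hel(n)=O(n^a)$ is indispensable: without determinacy of the finite-dimensional problems the $\mu_d$ need not be compatible under projection and the limit need not exist. Once determinacy is secured, the remaining points --- verifying \eqref{c3}, the Kolmogorov bookkeeping, and the integrability of $t^\kappa$ --- are routine.
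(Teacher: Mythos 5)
Your proposal is correct, but its existence argument takes a genuinely different route from the paper's. The paper proves the ``only if'' of (i) operator-theoretically, via a GNS-type construction: the form $\jap{f,g}=\sum_{n,m}\hel(nm)f_n\overline{g_m}$ is completed to a Hilbert space $\calH$, the coordinate multiplication operators $A_j$ are shown to be essentially self-adjoint by Devinatz's argument (this is where the growth bound enters, through determinacy of the one-dimensional sections), and the spectral theorem for a countable commuting family (Samoilenko) yields a resolution of identity $E$ with $\hel(p^\kappa)=\int t^\kappa\,\jap{dE(t)1,1}$; part (ii) then follows because each $A_j$ is positive semi-definite. You bypass operator theory entirely: you invoke the finite-variable moment theorem for each section $\hel^{(d)}$ --- your verification of \eqref{c3} is exactly the reduction the paper alludes to, modulo a harmless constant prefactor at $\kappa=0$ removable by rescaling $\hel$ --- and your key step, that determinacy is a property of the moment \emph{sequence} so that $\mu_d$ and $(\pi_d)_*\mu_{d+1}$, both representing $\hel^{(d)}$, must coincide, is sound and correctly identified as the place where the hypothesis $\hel(n)=O(n^a)$ is indispensable; the Kolmogorov projective limit (with the degenerate case $\hel(1)=0$ properly disposed of via the $2\times2$ submatrix inequality $|\hel(n)|^2\leq\hel(1)\hel(n^2)$) then delivers $\mu$. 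Notably, your consistency-plus-Kolmogorov mechanism is precisely what the paper itself uses for the \emph{uniqueness} clause, so the two proofs agree there, and your treatment of (ii) --- one-variable Stieltjes plus Carleman-type determinacy applied to each marginal, giving $\mu(\{t_j<0\})=0$ for every $j$ --- is in fact more detailed than the paper's terse remark that the $A_j$ are positive semi-definite. What each approach buys: yours is more elementary and self-contained modulo the quoted $d$-variable theorem, needing no infinite-dimensional spectral theorem; the paper's construction, on the other hand, produces the Hilbert space $\calH$ that is reused in Section~\ref{sec:finiterank} (finite rank of $M(\hel)$ corresponds to $\dim\calH<\infty$, forcing finitely supported moment measures), so its operator-theoretic scaffolding pays off later in the paper.
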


In both (i) and (ii), the ``if'' part is immediate. 
For completeness we give a sketch of the ``only if'' part below. 

The condition $\hel(n)=O(n^a)$ is only a simple sufficient
condition to ensure that all finite-variable sections of the moment problem satisfy the growth condition \eqref{c3}.

A curious corollary of (ii) is that if $M(\hel)\geq0$ and $M(\hel(p\ \cdot))\geq0$ for all primes $p$, then 
$M(\hel (\ell\ \cdot))\geq0$ for all integers $\ell\geq1$. 
This should be a purely algebraic fact, but we do not know 
how to prove this directly (without the use of the moment problem).

\begin{proof}[Sketch of proof of Theorem~\ref{thm:moment}]
Part (i): the ``if'' part is clear. Let us prove the ``only if'' statement. 
Assume that $M(\hel)\geq0$. 
Let $\mathscr{P}$ be the vector space of all polynomials in $t\in\bbR^\infty$. 
For any such polynomials $f,g$, given by
$$
f(t)=\sum_{n\in\bbN} f_n t^{\kappa(n)}, \quad
g(t)=\sum_{n\in\bbN} g_n t^{\kappa(n)},
$$
consider the positive semi-definite form 
$$
\jap{f,g}=\sum_{n,m\in\bbN} \hel(nm) f_n \overline{g_m}. 
$$
Let us assume for simplicity that $\jap{f,f}=0$ implies $f=0$ (otherwise one needs
to consider the quotient of $\mathscr{P}$ over the corresponding subspace). 
Then $\langle f, f \rangle$ is an inner product on $\mathscr{P}$, 
and we may complete it to a Hilbert space $\mathcal{H}$.

For each $j\geq1$, consider the densely defined operator $A_j$, 
$\dom A_j = \mathscr{P}$, 
given by
$$
(A_j f)(t) = t_jf(t)=\sum_{p_j | n}f_{n/p_j}t^{\kappa(n)}.
$$
Each operator $A_j$ is symmetric, $\langle A_j f , g \rangle = \langle f, A_j g \rangle$, 
and commutes with every other operator $A_k$.
Under the assumption we have made on the growth of $|\hel(n)|$, 
it is known \cite{Sim98} that for every fixed $n_0 \in \bbN$ and for every prime number $p_j$, the Hamburger moment problem
$$
\hel(n_0 p_j^k) = \int_{-\infty}^{\infty} t^k \, d\nu(t), \quad k = 0, 1, 2 \ldots
$$
is uniquely solvable. With this property of unique solvability in each one-dimensional direction, an argument of Devinatz (\cite{Dev57}, 3.1) proves that each operator $A_j$ is actually essentially self-adjoint. That is, the closure of $A_j$ (which we also denote by $A_j$) is self-adjoint.

Hence, we have a countable family $(A_1, A_2, \ldots)$ of commuting self-adjoint operators to which we can apply the spectral theorem \cite{Samo91}. The family has a resolution of identity $E(t)$ on $\mathbb{R}^\infty$  such that
$$
A_j = \int_{\mathbb{R}^\infty} t_j \, dE(t).
$$ 
Hence we obtain
$$
\hel(n) = \left \langle \prod A_j^{\kappa_j} 1, 1\right\rangle 
= 
\int_{\mathbb{R}^\infty} t^\kappa \, \langle dE(t)1, 1 \rangle, \qquad n = p^\kappa, 
$$
which gives the existence part of the theorem. 

Let $j_1, \ldots, j_d$ be distinct positive integers. Under the growth condition on the coefficients, each $d$-variable Hamburger moment problem
$$
\hel(p_{j_1}^{\kappa_{j_1}}\cdots p_{j_d}^{\kappa_{j_d}}) 
= 
\int_{\mathbb{R}^{d}} t_{j_1}^{\kappa_{j_1}}\cdots t_{j_d}^{\kappa_{j_d}} 
\, d\nu(t), 
$$
is uniquely solvable \cite{Ber68}, $1 \leq d < \infty$. Therefore, each projection $\nu$,
$$
\nu(B_{j_1} \times \cdots B_{j_d}) 
= 
\mu\left(\prod_{k=1}^\infty B_k\right), \qquad B_k 
= 
\bbR \textrm{ if } k \notin \{j_1, \ldots, j_d\},
$$
is unique.  
Hence, $\mu$ is also unique, by the Kolmogorov extension theorem.

Part (ii): 
If $\mu$ is supported on $[0,\infty)^\infty$, then it is clear that  $\{\hel(pnm)\}_{n,m=1}^\infty$ is positive semi-definite for every prime $p$. Conversely, if every such form is positive semi-definite, then each operator $A_j$ in the proof is positive semi-definite. Hence the construction (which is unique) leads to a measure $\mu$ supported on $[0,\infty)^\infty$.
\end{proof}

\section{Bounded non-negative matrices and Carleson measures} \label{sec:bounded}

\subsection{Carleson measures}

A Carleson measure for $H^2(\bbT^d)$ (here $d\in\bbN$ or $d=\infty$) 
is a finite (in general complex-valued) measure on the polydisk $\bbD^d$ such that 
\[
\int_{\bbD^d}\abs{f(z)}^2\, d\abs{\mu}(z)\leq C\norm{f}_{H^2(\bbT^d)}^2 
\label{d1}
\]
holds true for some $C>0$ and for all polynomials  $f$. 

For $d=1$, 
a characterisation of Carleson measures in terms of Carleson windows
is well known \cite{Carleson}. 
For $d>1$, it is insufficient to test, for example, on products of Carleson windows, and a sufficient
testing condition incorporates the ``generalized Carleson windows'' of all connected open sets in $\bbT^d$, see \cite{Chang}. However, here we are primarily concerned with measures on $(-1,1)^d$. The intersection between $(-1,1)^d$ and a generalized Carleson window is easy to handle; in this case testing on products of Carleson windows is sufficient, see item (3) of Theorem~\ref{thm.e2}.

Carleson measures $\mu$ for $H^2(\bbT^\infty)$ are poorly understood except in the special case that $\mu$ corresponds, under the Bohr lift (see Section~\ref{sec:bohr}), to a measure of compact support in the half plane $\Re s \geq 1/2$ \cite{Olsen}. See also \cite{BPSVolterra}, where Carleson measures on $H^2(\bbT^\infty)$ are used to characterize the boundedness of Volterra operators.

Testing on reproducing kernels gives a necessary ``Carleson window condition'', which we of course know is insufficient, as this is already the case for $d < \infty$.  In the Appendix we show that the testing condition, analogous to item (3) of Theorem~\ref{thm.e2}, is insufficient even for measures on $(-1,1)^\infty$. 

\subsection{Bounded non-negative Hankel matrices: $d=1$}

For a finite measure $\mu$ in $\bbD^d$, consider the sequence of moments, 
\[
\han(\kappa)=\int_{\bbD^d} z^\kappa \, d\mu(z), \quad \kappa\in\bbN_0^d.
\label{eq:mmt}
\]
Let us begin by discussing $d=1$.
\begin{theorem}\cite[Theorem 7.4]{Peller} \label{thm:balayage}
\begin{enumerate}[label={\rm ({\roman*})}]
\item
If $\mu$ is a Carleson measure and $\han$ is the sequence of moments of $\mu$, 
defined by \eqref{eq:mmt}, then the Hankel 
matrix $H_1(\han)$ is bounded on $\ell^2(\bbN_0)$. 
\item
If $H_1(\han)$ is bounded on $\ell^2(\bbN_0)$, then there exists a Carleson measure on $\bbD$ 
such that $\han$ is given by \eqref{eq:mmt}.  
\end{enumerate}
\end{theorem}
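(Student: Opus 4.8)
The plan is to handle the two implications by quite different means: part (i) is a soft Cauchy--Schwarz argument, whereas part (ii) is, at bottom, a reformulation of Nehari's theorem (Theorem~\ref{thm.b1} with $d=1$), and this is where all the work sits.

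First I would prove part (i). Fix finitely supported sequences $a=\{a_k\}$ and $b=\{b_j\}$ in $\ell^2(\bbN_0)$, and set $A(z)=\sum_k a_k z^k$ and $\tilde B(z)=\sum_j\overline{b_j}\,z^j$; these are polynomials with $\norm{A}_{H^2(\bbT)}=\norm{a}_{\ell^2}$ and $\norm{\tilde B}_{H^2(\bbT)}=\norm{b}_{\ell^2}$. Inserting the moment representation \eqref{eq:mmt} into the Hankel form and interchanging the order of summation and integration gives
\[
\jap{H_1(\han)a,b}_{\ell^2(\bbN_0)}=\sum_{j,k}\han(j+k)a_k\overline{b_j}=\int_{\bbD}A(z)\tilde B(z)\,d\mu(z).
\]
Applying the Cauchy--Schwarz inequality in $L^2(\abs{\mu})$ and then the Carleson estimate \eqref{d1} to each of the polynomials $A$ and $\tilde B$ yields
\[
\Abs{\jap{H_1(\han)a,b}}\leq\int_\bbD\abs{A}\,\abs{\tilde B}\,d\abs{\mu}\leq C\norm{A}_{H^2(\bbT)}\norm{\tilde B}_{H^2(\bbT)}=C\norm{a}_{\ell^2}\norm{b}_{\ell^2}.
\]
Since the finitely supported sequences are dense, $H_1(\han)$ extends to a bounded operator with $\norm{H_1(\han)}\leq C$.

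For part (ii) I would feed the bounded operator into Nehari's theorem. Theorem~\ref{thm.b1} (with $C_1=1$) furnishes a symbol $B\in L^\infty(\bbT)$ satisfying \eqref{b1} with $d=1$, i.e. $\jap{B,z^\kappa}_{L^2(\bbT)}=\han(\kappa)$ for every $\kappa\in\bbN_0$, and with $\norm{B}_{L^\infty(\bbT)}\leq\norm{H_1(\han)}$. I would then take the boundary measure $d\mu(z)=B(\overline z)\,dm(z)$ on $\bbT$; the reflection $z\mapsto\overline z$ is present only to align the Fourier conventions, and substituting $z\mapsto\overline z$ in the integral shows at once that $\int_\bbT z^\kappa\,d\mu=\han(\kappa)$ for all $\kappa\geq0$, so $\han$ is the moment sequence of $\mu$ as in \eqref{eq:mmt}. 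Since $\mu$ has bounded density, $\int_\bbT\abs{f}^2\,d\abs{\mu}\leq\norm{B}_{L^\infty(\bbT)}\norm{f}_{H^2(\bbT)}^2$ for all polynomials $f$, so $\mu$ is a Carleson measure; combined with part (i) this even shows that the optimal Carleson constant is comparable to $\norm{H_1(\han)}$.

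The main obstacle is part (ii): producing the symbol $B$ is exactly the hard direction of Nehari's theorem, which rests on the Hahn--Banach theorem together with the factorization $H^1=H^2\cdot H^2$ recalled just after Theorem~\ref{thm.b1}. A secondary point worth flagging is that the measure constructed above is supported on the boundary circle $\bbT=\partial\bbD$ rather than on the open disc; this is harmless and consistent with the usual convention, since the defining inequality \eqref{d1} and the moment integrals \eqref{eq:mmt} involve only polynomials, which are continuous up to $\bbT$.
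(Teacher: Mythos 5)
Part (i) of your proposal is correct and is essentially the paper's own argument: it is the same Cauchy--Schwarz computation that the paper carries out explicitly for the infinite-variable analogue (Theorem~\ref{thm.d2}), and the paper describes this direction as straightforward.

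Part (ii) has a genuine gap, and it is exactly the point you flag at the end and then dismiss as harmless. Your measure $d\mu(z)=B(\bar z)\,dm(z)$ is supported on $\bbT=\partial\bbD$, whereas the theorem asks for a Carleson measure \emph{on} $\bbD$, and throughout this paper $\bbD$ is the open disc: Carleson measures are defined as measures on the polydisc $\bbD^d$, and the moment integral \eqref{eq:mmt} is taken over $\bbD^d$. The open-disc requirement is the substance of the statement, not a convention, for two reasons visible in the paper itself. First, if boundary-supported measures were admissible, your argument applied verbatim to the Ferguson--Lacey--Terwilleger symbol from Theorem~\ref{thm.b1} would settle the $d>1$ analogue of item (ii) instantly; yet the paper explicitly says, at the start of the subsection on $d>1$, that it does not know whether item (ii) holds in that case. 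Second, everything the representation is used for downstream (Widom's Theorem~\ref{thm.e1}, with measures on $(-1,1)$; Theorem~\ref{thm:carleson}, with measures on $(-1,1)^\infty$) concerns measures in the interior.

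Nor can the gap be closed by a cheap modification of your construction: a measure of product form $B(e^{-i\theta})\,dm(\theta)\otimes g(r)\,dr$ on the open disc has $\kappa$-th moment equal to $\bigl(\int_0^1 r^\kappa g(r)\,dr\bigr)\han(\kappa)$, and for any finite radial density $g$ the radial factor tends to $0$ as $\kappa\to\infty$, so it cannot equal $1$ for all $\kappa$. The intended proof --- this is the content of Peller's Theorem 7.4, as the paper's remark after the statement indicates --- runs: boundedness of $H_1(\han)$ gives a symbol in $BMO$, and Carleson's theorem that every $BMO$ function is the Poisson balayage $S\mu(\zeta)=\int_{\bbD}P_z(\zeta)\,d\mu(z)$ of a Carleson measure $\mu$ on the \emph{open} disc then produces the required measure, since $\int_{\bbT}\bar{\zeta}^\kappa (S\mu)(\zeta)\,dm(\zeta)=\int_{\bbD}\bar z^\kappa\,d\mu(z)$ (the harmonic extension of $\bar\zeta^\kappa$ is $\bar z^\kappa$); after conjugating/reflecting the measure and absorbing an additive constant by a point mass at the origin, one obtains exactly \eqref{eq:mmt}. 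Your appeal to the hard direction of Nehari is a legitimate deep ingredient, but it yields only a closed-disc variant of the theorem; the balayage theorem --- the ``subtle fact'' the paper highlights --- is precisely what your proposal is missing.
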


The first part is straightforward, while the second part requires use of the subtle fact that every function of bounded mean oscillation can be represented as the Poisson balayage of a Carleson measure on $\bbD$.

If $H_1(\han)$ is non-negative and bounded, then it is easy to see that the moment measure $\mu$ of $\han$, 
see \eqref{hamb}, must be supported on $(-1,1) \subset \bbD$. 

\begin{theorem}[Widom \cite{Widom66}] \label{thm.e1}
Let $\mu$ be a finite positive measure on $(-1,1)$ and let
$$
\han(\kappa)=\int_{-1}^1 t^\kappa \, d\mu(t), \quad \kappa\in\bbN_0.
$$
Then the following are equivalent:
\begin{enumerate}[label={\rm ({\arabic*})}]
\item
$H_1(\han)$ defines a  bounded operator on $\ell^2(\bbN_0)$.
\item
$\mu$ is a Carleson measure for $H^2(\bbT)$. 
\item 
$\mu$ satisfies the ``Carleson window condition'': 
$\mu(-1,-1+\eps)+\mu(1-\eps,1)=O(\eps)$ as $\eps\to0$. 
\item
The linear operator 
$$
G_1: L^2(\mu)\to L^2(\mu), \quad
(G_1f)(t)=\int_{-1}^1 \frac{f(s)}{1-ts}\, d\mu(s), \quad t\in(-1,1),
$$
is bounded on $L^2(\mu)$.
\item
$\han(n)=O(n^{-1})$ as $n\to\infty$. 
\end{enumerate}
\end{theorem}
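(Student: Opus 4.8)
The plan is to funnel all five conditions through one quadratic-form identity. To each finitely supported $a=\{a_n\}\in\ell^2(\bbN_0)$ I associate the polynomial $\phi_a(t)=\sum_n a_n t^n$, which is the restriction to the diameter $(-1,1)$ of $\sum_n a_n z^n\in H^2(\bbT)$; the assignment $a\mapsto\sum_n a_n z^n$ is an isometry of $\ell^2(\bbN_0)$ onto $H^2(\bbT)$. Inserting the moment representation of $\han$ and using that $t$ is real gives
$$
\jap{H_1(\han)a,b}_{\ell^2}=\int_{-1}^1\phi_a(t)\overline{\phi_b(t)}\,d\mu(t),
$$
so that $\jap{H_1(\han)a,a}=\int_{\bbD}\abs{f}^2\,d\mu$ for $f=\sum_n a_n z^n$. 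Since $\mu\geq0$, the densely defined symmetric operator $H_1(\han)$ is non-negative, and such an operator is bounded exactly when its quadratic form is bounded. Thus (1) is literally the statement that $\int_{\bbD}\abs{f}^2\,d\mu\leq C\norm{f}_{H^2(\bbT)}^2$ for all polynomials $f$, which is (2); this settles (1)$\Leftrightarrow$(2) immediately and identifies the optimal constant as $\norm{H_1(\han)}$.

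Next I would obtain (1)$\Leftrightarrow$(4) by factoring both matrices through $L^2(\mu)$. Define $W\colon L^2(\mu)\to\ell^2(\bbN_0)$ by $(Wf)_n=\int_{-1}^1 s^n f(s)\,d\mu(s)$, so that its adjoint acts on polynomials by $W^*a=\phi_a$. Expanding the kernel of $G_1$ as $\tfrac{1}{1-ts}=\sum_n t^n s^n$, a direct computation yields $H_1(\han)=WW^*$ and $G_1=W^*W$ on the respective dense domains, equivalently $\jap{H_1(\han)a,a}=\norm{W^*a}_{L^2(\mu)}^2$ and $\jap{G_1 f,f}_{L^2(\mu)}=\norm{Wf}_{\ell^2}^2$. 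Consequently $H_1(\han)$ is bounded iff $W$ is bounded iff $G_1$ is bounded, with $\norm{H_1(\han)}=\norm{W}^2=\norm{G_1}$. The only delicate point is that $W$ is a priori only densely defined; but the non-negativity of both forms lets one read the equivalence directly off the boundedness of the forms, sidestepping any closability subtlety.

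For (2)$\Leftrightarrow$(3) I would appeal to the classical one-variable Carleson embedding theorem, which characterizes Carleson measures for $H^2(\bbT)$ by the box condition $\mu(S(I))\leq C\abs{I}$ over all arcs $I\subset\bbT$. Because $\mu$ lives on the diameter $(-1,1)$, the Carleson window $S(I)$ meets $\supp\mu$ only when $I$ is a neighborhood of $1$ or of $-1$, and then $S(I)\cap(-1,1)$ is an interval $(1-\eps,1)$ or $(-1,-1+\eps)$ with $\eps\asymp\abs{I}$; hence the full box condition collapses to the two-endpoint condition (3). The necessity of (3) is elementary, obtained by testing (2) on the reproducing kernels $K_t(z)=(1-tz)^{-1}$ as $t\to\pm1$ along the real axis.

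Finally, for (3)$\Leftrightarrow$(5) I would split $\mu=\mu_++\mu_-$ according to the sign of $t$ and reflect $\mu_-$ onto $[0,1)$, writing $\han(n)=a_n+(-1)^n b_n$ with $a_n=\int_0^1 t^n\,d\mu_+$ and $b_n=\int_0^1 u^n\,d\wt\mu_-$, both non-negative and non-increasing. For the forward direction, an integration by parts turns the tail bound $\mu_+(1-\eps,1)=O(\eps)$ into $a_n=n\int_0^1 t^{n-1}\mu_+([t,1))\,dt=O(1/n)$ (and likewise for $b_n$), giving $\abs{\han(n)}\leq a_n+b_n=O(1/n)$. For the converse, the even moments $\han(2n)=a_{2n}+b_{2n}$ bound each term from above by a nonnegative quantity, and the elementary inequality $a_{2n}\geq(1-\tfrac1{2n})^{2n}\mu_+(1-\tfrac1{2n},1)\geq c\,\mu_+(1-\tfrac1{2n},1)$ (with the analogue for $b$) recovers (3) at the scales $\eps=1/(2n)$, monotonicity filling the gaps. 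I expect the genuine obstacle to lie in the sufficiency half of the Carleson step, i.e.\ passing from the window condition (3) back to the embedding (2): all the other implications are soft, whereas turning a size condition on $\mu$ into an $L^2$ inequality is where the analysis concentrates. The saving grace of the present setting is that $\mu$ sits on the real diameter, so the kernel $(1-ts)^{-1}$ of $G_1$ has no oscillation and its only singularities are at $t=s=\pm1$; this should permit a direct dyadic splitting and summation-by-parts estimate tailored to the two endpoints, in place of the full strength of Carleson's theorem.
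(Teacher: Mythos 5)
Your proposal is correct, and roughly half of it coincides with the paper's own argument (given there for the $d$-variable analogue, Theorem~\ref{thm.e2}, whose $d=1$ case is this statement): your factorization $H_1(\han)=WW^*$, $G_1=W^*W$ through $L^2(\mu)$ is precisely the paper's argument with the embedding operator $\calN_1$ (your $W$ is $\calN_1^*$), your kernel testing for the necessity of (3) is the paper's (2)$\Rightarrow$(3), and your integration-by-parts bound $a_n=n\int_0^1 t^{n-1}\mu_+\bigl((t,1)\bigr)\,dt=O(1/n)$ after reflecting $\mu_-$ onto $[0,1)$ is the paper's (3)$\Rightarrow$(5), where your sign-splitting plays the role of the paper's ``hyperoctant'' reduction. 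Where you genuinely diverge is in how the loop is closed. The paper proves the cycle (2)$\Rightarrow$(3)$\Rightarrow$(5)$\Rightarrow$(1)$\Rightarrow$(4)$\Rightarrow$(2) and never invokes Carleson's embedding theorem: its decisive step is (5)$\Rightarrow$(1), where the entrywise domination $\abs{\han(j+k)}\leq C/(1+j+k)$ against the Hilbert matrix $H_1(\gamma)$, $\gamma(j)=1/(1+j)$, of norm $\pi$, gives boundedness because the dominating matrix has positive entries. You instead prove (3)$\Leftrightarrow$(5) in both directions --- your reverse estimate $a_{2n}\geq(1-\tfrac{1}{2n})^{2n}\mu_+(1-\tfrac1{2n},1)$ with monotonicity filling the gaps between the scales $\eps=1/(2n)$ is correct and does not appear in the paper, which never needs (5)$\Rightarrow$(3) directly --- and you close (3)$\Rightarrow$(2) by citing the classical box-condition characterization of Carleson measures for $H^2(\bbT)$; since $\mu$ lives on the diameter, only windows anchored at $\pm1$ meet its support, so that reduction is valid and the appeal to Carleson's theorem already completes the proof, making your closing hedge about a bespoke dyadic argument unnecessary. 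The trade-off is worth noting: your route is shorter but leans on a black box that is special to one variable, whereas the paper's route through the moment decay (5) and the Hilbert matrix is self-contained and is exactly the argument that survives for $d>1$ and motivates the infinite-variable Theorem~\ref{thm:carleson}, where no box-condition theorem exists --- indeed the paper's Appendix shows that the kernel-testing condition fails to characterize Carleson measures when $d=\infty$.
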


We have separated items (2) and (3) in the theorem above only to make it easier to compare it 
with the results that follow.

\subsection{Bounded non-negative Hankel matrices: $d>1$}

For $d > 1$, item (i) of Theorem~\ref{thm:balayage} clearly remains true. 
We do not know if item (ii) is true in this case. 
For measures on $(-1,1)^d$, generating non-negative Hankel matrices, we can say more.
\begin{theorem}\label{thm.e2}
Let $\mu$ be a finite positive  measure on $(-1,1)^d$, $d < \infty$, and let 
$$
\han(\kappa) = \int_{(-1,1)^d} t^\kappa \, d\mu(t), \qquad \kappa = (\kappa_1, \ldots, \kappa_d)\in\bbN_0^d.
$$
Then the following are equivalent:
\begin{enumerate}[label={\rm ({\arabic*})}]
\item 
$H_d(\han)$ defines a bounded operator on $\ell^2(\bbN_0^d)$.
\item
$\mu$ is a Carleson measure for $H^2(\bbT^d)$. 
\item 
There is $C > 0$ such that for every $s \in (-1,1)^d$ it holds that
$$
\mu(I_s) \leq C\prod_{j=1}^d (1-s_j),
$$
where
$$
I_s = \{t \in (-1,1)^d \, ; \, \forall j \geq 1 :  |t_j| \geq |s_j| \; \mathrm{and} \; t_j s_j \geq 0\}.
$$
\item
The linear operator
$$
G_d\colon L^2(\mu)\to L^2(\mu), \quad 
(G_df)(t)= \int_{(-1,1)^d}K_d(s,t)f(s) \, d\mu(s), \quad t\in (-1,1)^d,
$$
is bounded on $L^2(\mu)$, where $K_d$ is the kernel defined in \eqref{eq:kdkernel}.
\item 
There is $C > 0$ such that
\[ 
\label{eq:tensorhilbert}
|\han(\kappa)| \leq C\prod_{j=1}^d \frac{1}{1 + \kappa_j}.
\]
\end{enumerate}
\end{theorem}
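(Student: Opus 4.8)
The plan is to route all five conditions through the single Carleson embedding $J\colon H^2(\bbT^d)\to L^2(\mu)$, $Jf=f|_{(-1,1)^d}$, and to isolate the genuinely analytic content in a couple of elementary \emph{product} estimates. Identifying $\ell^2(\bbN_0^d)$ with $H^2(\bbT^d)$ via Fourier coefficients, the non-negativity of $\mu$ and the fact that it lives on the \emph{real} set $(-1,1)^d$ — so that $K_d(s,t)=\prod_j(1-s_jt_j)^{-1}$ is real and symmetric there — give the two operator identities $H_d(\han)=J^*J$ and $G_d=JJ^*$, at least as quadratic forms on polynomials. Indeed, for a polynomial $f$ with coefficient sequence $a=\{a_\kappa\}$ one checks $\jap{H_d(\han)a,a}_{\ell^2}=\int_{(-1,1)^d}\abs{f}^2\,d\mu=\norm{Jf}_{L^2(\mu)}^2$. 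Since any Hilbert space operator satisfies $\norm{J}^2=\norm{J^*J}=\norm{JJ^*}$ and polynomials are dense in $H^2(\bbT^d)$, the conditions (1), (2), (4) all amount to the single assertion $\norm{J}<\infty$; this already subsumes the easy half of Theorem~\ref{thm:balayage}. It then remains to attach the ``concrete'' conditions (3) and (5) to this circle, which I would do through the cycle $(1)\Rightarrow(3)\Rightarrow(5)\Rightarrow(1)$.

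For $(1)\Rightarrow(3)$ I would test the bounded form against Fej\'er/Dirichlet-type vectors. Fix $s\in(-1,1)^d$, set $N_j=\lceil 1/(1-\abs{s_j})\rceil$ and $\sigma_j=\sign s_j$, and take the normalized vector with coefficients $a_\kappa=\prod_j\sigma_j^{\kappa_j}$ for $0\le\kappa_j<N_j$ and $a_\kappa=0$ otherwise. Because $\han(\kappa)=\int t^\kappa\,d\mu$, the form collapses to $\int_{(-1,1)^d}\prod_j\bigl(\sum_{m<N_j}(\sigma_jt_j)^m\bigr)^2\,d\mu(t)$, which is non-negative and, on the box $I_s$, is bounded below by $\prod_j(cN_j)^2$: there $\sigma_jt_j=\abs{t_j}\ge\abs{s_j}\approx 1-1/N_j$, so $\sum_{m<N_j}\abs{t_j}^m\ge N_j\abs{t_j}^{N_j-1}\gtrsim N_j$. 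Comparing with $\norm{H_d(\han)}\norm{a}^2=\norm{H_d(\han)}\prod_jN_j$ gives $\mu(I_s)\le C_d\norm{H_d(\han)}\prod_j(1-\abs{s_j})$, which is (3). The step $(3)\Rightarrow(5)$ is a real-variable computation: writing $\abs{t_j}^{\kappa_j}=\int_0^{\abs{t_j}}\kappa_ju^{\kappa_j-1}\,du$ and applying Tonelli together with the box bound $\mu(\{t:\abs{t_j}>u_j\ \forall j\})\le C\prod_j(1-u_j)$ on each of the $2^d$ orthants, the integral separates, and $\int_0^1\kappa_ju^{\kappa_j-1}(1-u)\,du=(1+\kappa_j)^{-1}$ yields $\abs{\han(\kappa)}\le C'\prod_j(1+\kappa_j)^{-1}$. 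It is essential here that the box bound is itself a product, so that the layer-cake integral factorizes.

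The implication $(5)\Rightarrow(1)$ is where the real work lies, and it is the step I expect to be the main obstacle: for $d>1$ an arbitrary Carleson measure cannot be detected by product windows — one needs Chang's generalized windows — so the crux is that the moment decay (5) is strong enough to bypass this. I would prove it by a Schur test applied directly to the matrix, dominating $\abs{\han(\kappa+\kappa')}\le C\prod_j(1+\kappa_j+\kappa'_j)^{-1}$ and choosing the weights $w_\kappa=\prod_j(1+\kappa_j)^{-1/2}$. The test sum $\sum_{\kappa'\in\bbN_0^d}\prod_j(1+\kappa_j+\kappa'_j)^{-1}(1+\kappa'_j)^{-1/2}$ factorizes over the $d$ coordinates precisely because both the bound and the index set $\bbN_0^d$ are products, reducing matters to the one-variable estimate $\sum_{m\ge0}(1+n+m)^{-1}(1+m)^{-1/2}=O\bigl((1+n)^{-1/2}\bigr)$, proved by splitting at $m=n$. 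This verifies the Schur condition with a finite (though $d$-dependent) constant, so $H_d(\han)$ is bounded, closing the circle.

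As a byproduct the argument reproves the sufficiency half of Widom's one-variable Theorem~\ref{thm.e1}, and it shows that for measures on $(-1,1)^d$ the product testing condition (3) is genuinely equivalent to the full Carleson property (2), despite the failure of product windows for general $d>1$ measures; the mechanism is exactly the product structure of $\bbN_0^d$ that is exploited in the Schur test. The only points demanding care are the unbounded-operator bookkeeping that makes ``bounded operator'' in (1) and (4) coincide with $\norm{J}<\infty$ (handled by working with forms on polynomials and density), and the uniform convergence and factorization in the Schur step.
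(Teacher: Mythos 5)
Your proposal is correct, and its backbone coincides with the paper's: the paper also routes everything through the Carleson embedding (there called $\calN_d$), establishing $H_d(\han)=\calN_d^*\calN_d$ and $G_d=\calN_d\calN_d^*$ on a dense class, so your up-front collapse of (1), (2), (4) into $\norm{J}<\infty$ is the same operator-theoretic content as the paper's (1)$\Rightarrow$(4)$\Rightarrow$(2), just organized more economically (the paper likewise verifies the form identities on bounded, compactly supported $f$, which is exactly your ``bookkeeping'' caveat — note that the correct dense class for the $L^2(\mu)$-side identity is compactly supported functions, not polynomials). You genuinely diverge on two legs. For the window condition, the paper proves (2)$\Rightarrow$(3) by testing the Carleson inequality on the reproducing kernels $K_d(\cdot,s)$ and using $|K_d(t,s)|^2\geq \prod_{j=1}^d(1-s_j^2)^{-2}$ on $I_s$; your (1)$\Rightarrow$(3) via truncated geometric test vectors is a discrete partial-sum avatar of the same kernel and works equally well. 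For (5)$\Rightarrow$(1), the paper dominates $H_d(\han)$ entrywise by the tensor power $H_1(\gamma)^{\otimes d}$ of the Hilbert matrix, quoting $\norm{H_1(\gamma)}=\pi$, so that step is a two-line domination with constant $C\pi^d$; your Schur test with weights $\prod_j(1+\kappa_j)^{-1/2}$ is a correct, self-contained substitute (the one-variable estimate $\sum_{m\geq0}(1+n+m)^{-1}(1+m)^{-1/2}=O((1+n)^{-1/2})$ checks out), at the price of a worse $d$-dependent constant. Your framing of (5)$\Rightarrow$(1) as ``the main obstacle'' somewhat overstates it: once (5) is in hand, Chang's generalized windows and the positivity of $\mu$ play no role at all, in either your argument or the paper's. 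Your (3)$\Rightarrow$(5) is the paper's integration by parts in layer-cake clothing.

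Three repairs/observations. First, your chain $\sum_{m<N_j}|t_j|^m\geq N_j|t_j|^{N_j-1}\gtrsim N_j$ fails for small $|s_j|$ (e.g. $N_j=2$, $|s_j|=0.01$, where $|t_j|^{N_j-1}$ need not be $\gtrsim1$); use instead the closed form $\sum_{m<N_j}|t_j|^m\geq (1-|s_j|^{N_j})/(1-|s_j|)\geq (1-e^{-1})N_j/2$, which follows since $N_j(1-|s_j|)\geq1$ gives $|s_j|^{N_j}\leq e^{-1}$ and $N_j\leq 2/(1-|s_j|)$. Second, in (3)$\Rightarrow$(5) you need the same reduction the paper makes explicit: the identity $t^{\kappa_j}=\int_0^{t}\kappa_j u^{\kappa_j-1}\,du$ degenerates when $\kappa_j=0$, so either assume all $\kappa_j\geq1$ and drop to $d-1$ variables otherwise, or handle those factors separately. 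Third — a point in your favor — your (1)$\Rightarrow$(3) actually proves the symmetric bound $\mu(I_s)\leq C\prod_j(1-|s_j|)$, and your orthant-wise box bound in (3)$\Rightarrow$(5) consumes precisely this symmetric version. That is the right move: the condition as printed, with $\prod_j(1-s_j)$, is too weak on coordinates where $s_j<0$ (for $d=1$, a measure on $(-1,0)$ with $\mu((-1,s])=\sqrt{1+s}$ satisfies the printed bound with $C=1$ while (1) and (5) fail, since then $|\han(\kappa)|\asymp\kappa^{-1/2}$), whereas the paper's own (2)$\Rightarrow$(3) argument in fact yields $\mu(I_s)\leq C\prod_j(1-s_j^2)$, the intended symmetric statement. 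Your cycle is internally consistent exactly because both legs use the corrected form.
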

\begin{proof}
We will prove 
(2)$\Rightarrow$(3)$\Rightarrow$(5)$\Rightarrow$(1)$\Rightarrow$(4)$\Rightarrow$(2). 

(2)$\Rightarrow$(3): 
Fix $s\in(-1,1)^d$ and apply the defining inequality \eqref{d1} for Carleson measures
to the function $f(z)=K_d(z,s)$. We have
$$
\int_{(-1,1)^d}\abs{K_d(t,s)}^2 \, d\mu(t)
\leq 
C\norm{K_d(\cdot,s)}_{H^2(\bbT^d)}^2
=
CK_d(s,s)
=
C\prod_{j=1}^d (1-s_j^2)^{-1}. 
$$
For the left side, we have 
$$
\int_{(-1,1)^d}\abs{K_d(t,s)}^2\, d\mu(t)
\geq
\int_{I_s}\abs{K_d(t,s)}^2\, d\mu(t)
\geq
\prod_{j=1}^d (1-s_j^2)^{-2}\mu(I_s).
$$
Combining these two inequalities, we obtain (3). 

(3)$\Rightarrow$(5): 
We may assume that $\kappa_j \geq 1$ for every $j$, $1 \leq j \leq d$. For if $\kappa_j = 0$ for some $j$, we may instead reduce to the $(d-1)$-dimensional case. Furthermore, by splitting the integral into its $2^d$ hyperoctants, we may assume that $\mu$ is supported on $(0,1)^d$. Integration by parts in each variable then gives us that
$$
\han(\kappa) = \kappa_1 \kappa_2 \cdots \kappa_d \int_{(0,1)^d} t^{(\kappa_1 -1, \kappa_2 - 1, \ldots, \kappa_d - 1)} \mu(I_t) \, dt.
$$
Applying the hypothesis of (3) gives us that
$$
|\han(\kappa)| \leq C\kappa_1 \kappa_2 \cdots \kappa_d \int_{(0,1)^d} t^{(\kappa_1 -1, \kappa_2 - 1, \ldots, \kappa_d - 1)} \prod_{j=1}^d(1-t_j) \, dt = C\prod_{j=1}^d \frac{1}{1 + \kappa_j}, \qquad t = (t_1, \ldots, t_d),
$$
which is what we wanted to prove.

(5)$\Rightarrow$(1): 
Let $\gamma(j)=1/(1+j)$, and so 
$H_1(\gamma) = \{1/(1+j+k)\}_{j,k=0}^\infty$ is the Hilbert matrix.  Recall that $H_1(\gamma)$ is bounded on $\ell^2(\bbN_0)$
with the norm $\pi$. 
Note that the right-handside of \eqref{eq:tensorhilbert} is the coefficient sequence generating 
as its Hankel matrix the $d$-fold tensor product $H_1(\gamma)^{\otimes d}$, which is a bounded operator with norm $\pi^d$.  
By estimating
$$
\abs{\jap{H_d(\han)\{f_\kappa\},\{g_\kappa\}}_{\ell^2(\bbN_0^d)}}
\leq 
C\jap{H_1(\gamma)^{\otimes d}\{\abs{f_\kappa}\},\{\abs{g_\kappa}\}}_{\ell^2(\bbN_0^d)}
\leq 
C\pi^d\norm{\{\abs{f_\kappa}\}}_{\ell^2(\bbN_0^d)}\norm{\{\abs{g_\kappa}\}}_{\ell^2(\bbN_0^d)},
$$
we obtain the boundedness of $H_d(\han)$.

(1)$\Rightarrow$(4): We follow Widom's argument \cite{Widom66}, adapted here to the multi-dimensional case. 
Let us consider the linear operator
$$
\calN_d\colon \ell^2(\bbN_0^d)\to L^2(\mu),
\quad
\{f_\kappa\}_{\ell^2(\bbN_0^d)}
\mapsto 
\sum_{\kappa\in\bbN_0^d}f_\kappa t^\kappa. 
$$
The operator $\calN_d$ is well defined on the set of all finite sequences $\{f_\kappa\}$. 
Observe that $\calN_d$ is nothing but the Carleson embedding operator
$H^2(\bbT^d)\hookrightarrow L^2(\mu)$ written in the standard basis $\{z^\kappa\}$ of $H^2(\bbT^d)$. 
For any finite sequence $\{f_\kappa\}$, we have
$$
\jap{\calN_d\{f_\kappa\},\calN_d\{f_\kappa\}}_{L^2(\mu)}
=
\sum_{\kappa,\kappa'}f_\kappa \overline{f_{\kappa'}}
\int_{(-1,1)^d}t^{\kappa+\kappa'}\, d\mu(t)
=
\jap{H_d(\han)\{f_\kappa\}, \{f_\kappa\}}_{\ell^2(\bbN_0^d)}. 
$$
Since $H_d(\han)$ is bounded, it follows that $\calN_d$ is bounded. 
(In fact, this argument shows that (1)$\Leftrightarrow$(2).)
It follows that the adjoint is also bounded: 
$$
\calN_d^*\colon  L^2(\mu)\to \ell^2(\bbN_0^d), 
\quad
f\mapsto\left\{\int_{(-1,1)^d}f(t)t^\kappa \, d\mu(t)\right\}_{\kappa\in\bbN_0^d}. 
$$
Further, for any bounded function $f$, compactly supported in $(-1,1)^d$, we have
\begin{align*}
\jap{\calN_d^* f, \calN_d^* f}_{\ell^2(\bbN_0^d)}
&=
\sum_{\kappa}
\int_{(-1,1)^d}f(t)t^\kappa \, d\mu(t)
\int_{(-1,1)^d}\overline{f(s)}s^\kappa\,  d\mu(s)
\\
&=
\int_{(-1,1)^d}
\int_{(-1,1)^d}
K_d(t,s) f(t) \overline{f(s)} 
\, d\mu(s)\,  d\mu(t)
=
\jap{G_d f,f}_{L^2(\mu)},
\end{align*}
and so $G_d$ is non-negative and bounded. 

(4)$\Rightarrow$(2): If $G_d$ is bounded, then the same argument as above shows that 
$\calN_d^*$ is bounded, therefore its adjoint $\calN_d$ is bounded, and 
as already observed this is equivalent to the boundedness of the Carleson embedding 
$H^2(\bbT^d)\hookrightarrow L^2(\mu)$. 
\end{proof}

\begin{remark}
In fact, the above proof shows more than what is claimed. 
Indeed, it is a well known fact that for any bounded operator $\calN$ in a Hilbert space, 
the operators 
$$
\calN^*\calN_{(\Ker \calN)^\perp}\quad\text{ and }\quad \calN \calN^*_{(\Ker \calN^*)^\perp}
$$
are unitarily equivalent. The above proof shows that when (1)--(5) hold, 
then 
$$
H_d(\han)=\calN_d^*\calN_d \quad\text{ and }\quad G_d=\calN_d\calN_d^*.
$$
Hence $H_d(\han)$ and $G_d$ are unitarily equivalent
modulo kernels. This plays an important role in \cite{Widom66}.
\end{remark}

\subsection{Bounded non-negative Helson matrices}

For a finite measure on $\bbD^\infty$, consider the moment sequence $\hel$,
\[
\hel(p^\kappa)=\int_{\bbD^\infty} z^\kappa \, d\mu(z), \quad \kappa\in\bbN_0^{(\infty)}.
\label{eq:mmtpoly}
\]
By the Cauchy-Schwarz inequality we still have the easy part of Theorem~\ref{thm:balayage}.
\begin{theorem}\label{thm.d2}
If $\mu$ is a Carleson measure for $H^2(\bbT^\infty)$ and $\hel$ is defined by \eqref{eq:mmtpoly}, 
then the Helson matrix $M(\hel)$ is bounded on $\ell^2(\bbN)$. 
\end{theorem}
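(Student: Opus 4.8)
The plan is to realize the quadratic form of $M(\hel)$ as an $L^2(\mu)$ pairing of two analytic polynomials on the infinite polydisc, and then to bound it directly by the Cauchy--Schwarz inequality together with the defining Carleson inequality \eqref{d1}. This is the multiplicative counterpart of the computation $H_d(\han)=\calN_d^*\calN_d$ carried out in the proof of Theorem~\ref{thm.e2}, except that here only the elementary half of that argument is needed, in parallel with item (i) of Theorem~\ref{thm:balayage}.

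First I would fix two finitely supported sequences $\{f_n\}_{n\geq1}$ and $\{g_n\}_{n\geq1}$ and introduce the associated analytic polynomials
\[
F(z)=\sum_{n\geq1}f_n\, z^{\kappa(n)},\qquad G(z)=\sum_{n\geq1}\overline{g_n}\, z^{\kappa(n)}.
\]
Since the sequences are finite, each of $F,G$ involves only finitely many of the variables $z_j$ and is therefore a genuine polynomial, holomorphic and bounded on all of $\bbD^\infty$; in particular no convergence or holomorphic-extension issue arises. Substituting the moment representation \eqref{eq:mmtpoly} for $\hel(nm)=\hel(p^{\kappa(n)+\kappa(m)})$ and interchanging the finite sum with the integral yields
\[
\jap{M(\hel)\{f_n\},\{g_n\}}_{\ell^2(\bbN)}
=\sum_{n,m\geq1}\hel(nm)\,f_n\overline{g_m}
=\int_{\bbD^\infty}F(z)\,G(z)\, d\mu(z).
\]

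Next I would estimate the integral. As $\mu$ is in general complex-valued, I would pass to its total variation $\abs{\mu}$ and apply the Cauchy--Schwarz inequality in $L^2(\abs{\mu})$, obtaining
\[
\Abs{\int_{\bbD^\infty}F\,G\, d\mu}
\leq
\left(\int_{\bbD^\infty}\abs{F}^2\, d\abs{\mu}\right)^{1/2}
\left(\int_{\bbD^\infty}\abs{G}^2\, d\abs{\mu}\right)^{1/2}.
\]
Applying the Carleson condition \eqref{d1} to the polynomials $F$ and $G$, and using the Parseval identities $\norm{F}_{H^2(\bbT^\infty)}^2=\sum_n\abs{f_n}^2$ and $\norm{G}_{H^2(\bbT^\infty)}^2=\sum_n\abs{g_n}^2$, I would conclude that
\[
\Abs{\jap{M(\hel)\{f_n\},\{g_n\}}_{\ell^2(\bbN)}}
\leq
C\,\norm{\{f_n\}}_{\ell^2(\bbN)}\,\norm{\{g_n\}}_{\ell^2(\bbN)}.
\]
Since finitely supported sequences are dense in $\ell^2(\bbN)$, this bound on the sesquilinear form shows that $M(\hel)$ extends to a bounded operator, which completes the proof.

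There is essentially no serious obstacle here, this being the ``easy'' direction; the only points requiring a little care are the passage to $\abs{\mu}$ to accommodate complex measures and the observation that $F$ and $G$ are honest polynomials, so that both the interchange of summation and integration and the direct application of \eqref{d1} are unproblematic. The genuinely difficult content lies in the converse direction --- that boundedness of $M(\hel)$ should force $\mu$ to be a Carleson measure --- which is not asserted in this statement.
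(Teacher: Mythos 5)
Your proposal is correct and follows exactly the paper's own argument: the paper proves Theorem~\ref{thm.d2} via the same identity $\jap{M(\hel)\{f_n\},\{g_n\}}_{\ell^2(\bbN)}=\int_{\bbD^\infty}f(z)g(z)\,d\mu(z)$ for polynomials built from finite sequences, followed by Cauchy--Schwarz and the defining Carleson inequality \eqref{d1} (which is already stated with $d\abs{\mu}$, so your passage to the total variation matches the paper's setup). Your write-up merely spells out the details the paper leaves implicit.
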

\begin{proof}
This follows from the identity
$$
\jap{M(\hel)\{f_n\},\{g_n\}}_{\ell^2(\bbN)}
=
\int_{\bbD^\infty} f(z) g(z)\, d\mu(z), 
$$
where $f$ and $g$ are the polynomials
$$
f(z)=\sum_{n\geq1} f_n z^{\kappa(n)}, \quad 
g(z)=\sum_{m\geq1} \overline{g_m} z^{\kappa(m)},
$$
and from the definition of Carleson measure. 
\end{proof}

We do not know the answer to the following.
\begin{question}
Let $M(\hel)$ be bounded on $\ell^2(\bbN)$. 
Does there exist a Carleson measure $\mu$ for $H^2(\bbT^\infty)$ such that 
$\hel$ is represented as the moment sequence of $\mu$, as in \eqref{eq:mmtpoly}? 
\end{question}

As already mentioned, when $d > 1$ we do not know if such a result holds even in the finite variable Hankel setting.

Suppose now that $M(\hel)$ is non-negative, so that the moment sequence representation \eqref{eq:moment} holds. 
Note that if $M(\hel) \colon  \ell^2(\bbN) \to \ell^2(\bbN)$ is bounded, then $\hel(n) \to 0$ as $n \to \infty$, 
from which it follows that $\mu$ is concentrated on $(-1,1)^\infty$, i.e. that $\mu(\bbR^\infty \setminus (-1,1)^\infty) = 0$.

Extending the definition of $G_d$ of Theorem~\ref{thm.e2}, 
we would like to define the operator
$$
G_\infty: L^2(\mu)\to L^2(\mu), \quad
(G_\infty f)(t) = \int_{(-1,1)^\infty} K_\infty(s,t) f(s) \, d\mu(s).
$$
However, the convergence of the integral here is no longer an obvious issue, even in the case that $f$ is a polynomial. 
(The problem is illustrated by the fact that there exist Carleson measures for $H^2(\bbT^\infty)$ which are not supported in $\ell^2$,  \cite[Theorem 4.11]{HLS}.)
In the statement below, we have chosen to view $G_\infty$ as the limit (in the strong operator topology) of 
the operators corresponding to the finite variable versions of the kernel $K_\infty$. In order to define these, we need some 
preliminaries. 
We extend the reproducing kernels $K_d(s,t)$ to points $s,t \in (-1,1)^\infty$ by projecting onto the first $d$ coordinates:
\begin{equation}
K_d(s,t) = K_d( (s_1, \ldots, s_d), (t_1, \ldots, t_d)), \qquad s,t \in (-1,1)^\infty.
\label{e0}
\end{equation}
Further, we let the operator $G_d$ of Theorem~\ref{thm.e2} act on $L^2(\mu)$ by the formula
$$
G_d f(t) = \int_{(-1,1)^\infty} K_d(s,t) f(s) \, d\mu(s), \qquad f \in L^2(\mu), \quad t \in (-1,1)^\infty.
$$

\begin{theorem} \label{thm:carleson}
Let $\mu\geq0$ be a finite measure on $(-1,1)^\infty$, and let 
\[
\hel(p^\kappa)=\int_{(-1,1)^\infty} t^\kappa \, d\mu(t), \quad \kappa\in\bbN_0^{(\infty)}.
\label{e1}
\]
Then the following are equivalent: 
\begin{enumerate}[label={\rm ({\arabic*})}]
\item
$M(\hel)$ defines a bounded operator on $\ell^2(\bbN)$. 
\item 
$\mu$ is a Carleson measure for $H^2(\mathbb{T}^\infty)$.
\item  
The operators $G_d \colon  L^2(\mu) \to L^2(\mu)$ are bounded 
and converge in the strong operator topology to a limit $G_\infty \colon  L^2(\mu) \to L^2(\mu)$,
$$
G_\infty f = \lim_{d \to \infty} G_d f, \qquad f \in L^2(\mu).
$$
\end{enumerate}
\end{theorem}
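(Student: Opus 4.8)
The plan is to reduce all three conditions to statements about a single Carleson embedding operator and its finite-variable truncations, so that the finite-variable machinery of Theorem~\ref{thm.e2} can be combined with soft functional-analytic limiting arguments. First I would introduce the Carleson embedding $\calN_\infty \colon H^2(\bbT^\infty) \hookrightarrow L^2(\mu)$, defined on polynomials by $\calN_\infty(\sum_n f_n z^{\kappa(n)}) = \sum_n f_n t^{\kappa(n)}$ viewed as a function on $(-1,1)^\infty$. Using \eqref{e1}, for finite sequences $\{f_n\}$ one has
$$\norm{\calN_\infty\{f_n\}}_{L^2(\mu)}^2 = \sum_{n,m} \hel(nm) f_n \overline{f_m} = \jap{M(\hel)\{f_n\},\{f_n\}}_{\ell^2(\bbN)},$$
while $\norm{\{f_n\}}_{\ell^2(\bbN)} = \norm{\sum_n f_n z^{\kappa(n)}}_{H^2(\bbT^\infty)}$. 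Since $M(\hel)$ is a moment matrix of a positive measure and hence non-negative, it is bounded if and only if its quadratic form is controlled by $\norm{\{f_n\}}^2$, which by the identity above is exactly the Carleson condition \eqref{d1}. This proves (1)$\Leftrightarrow$(2) and identifies $M(\hel) = \calN_\infty^*\calN_\infty$ on polynomials; it is the converse to Theorem~\ref{thm.d2}, available here precisely because $M(\hel)$ is non-negative.

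Next I would analyze the truncations. Let $\mu_d$ be the pushforward of $\mu$ under projection onto the first $d$ coordinates, and let $\calN_{\infty,d}\colon H^2(\bbT^d) \to L^2(\mu)$ send a $d$-variable polynomial to the corresponding function on $(-1,1)^\infty$. A direct computation with the kernel $K_d$ of \eqref{e0} gives $G_d = \calN_{\infty,d}\calN_{\infty,d}^*$, so $\norm{G_d} = \norm{\calN_{\infty,d}}^2$; and since $\norm{g}_{L^2(\mu)} = \norm{g}_{L^2(\mu_d)}$ for $g$ depending only on the first $d$ variables, $\norm{\calN_{\infty,d}}$ equals the norm of the finite-variable Carleson embedding $H^2(\bbT^d)\hookrightarrow L^2(\mu_d)$. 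By Theorem~\ref{thm.e2} this is finite exactly when $\mu_d$ is Carleson for $H^2(\bbT^d)$, and then $\norm{G_d}$ equals the best Carleson constant $C_d$ of $\mu_d$. When $\calN_\infty$ is bounded I would also record the alternative factorization $G_d = \calN_\infty P_d \calN_\infty^*$, where $P_d$ is the orthogonal projection in $H^2(\bbT^\infty)$ onto the span of the monomials $z^\kappa$ with $\kappa$ supported in $\{1,\dots,d\}$; since every $n$ has $\kappa(n)$ finitely supported, $P_d \to I$ strongly.

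For (2)$\Rightarrow$(3): restricting \eqref{d1} to polynomials in the first $d$ variables shows each $\mu_d$ is Carleson for $H^2(\bbT^d)$ with the \emph{same} constant $C$, so the $G_d$ are uniformly bounded, $\norm{G_d} = C_d \le C$. Setting $G_\infty := \calN_\infty\calN_\infty^*$ (bounded by (1)$\Leftrightarrow$(2)), the second factorization yields $G_\infty - G_d = \calN_\infty(I - P_d)\calN_\infty^*$, so for fixed $f\in L^2(\mu)$ one has $\norm{(G_\infty - G_d)f} \le \norm{\calN_\infty}\,\norm{(I-P_d)\calN_\infty^* f} \to 0$, because $\calN_\infty^* f \in H^2(\bbT^\infty)$ and $P_d \to I$ strongly. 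This gives (3) and simultaneously resolves the convergence issue flagged before the theorem: although $K_\infty$ need not make sense, the operators $G_d$ do converge strongly, to $\calN_\infty\calN_\infty^*$.

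For (3)$\Rightarrow$(2): strong convergence of the $G_d$ forces $\sup_d\norm{G_d f} < \infty$ for each $f$, so by the uniform boundedness principle $C := \sup_d \norm{G_d} < \infty$; hence every $\mu_d$ is Carleson with constant $\norm{G_d} = C_d \le C$. As any polynomial $f$ involves only finitely many variables, say the first $D$, we get $\int |f|^2\,d\mu = \int|f|^2\,d\mu_D \le C_D\norm{f}_{H^2(\bbT^\infty)}^2 \le C\norm{f}^2$, which is \eqref{d1}, so $\mu$ is Carleson. I expect the main obstacle to be organizational rather than deep: pinning down the three descriptions of $G_d$ (as $\calN_{\infty,d}\calN_{\infty,d}^*$, as $\calN_\infty P_d \calN_\infty^*$, and as the finite-variable operator of Theorem~\ref{thm.e2}) and checking that their operator norms all coincide with $C_d$, and—most delicately—justifying the strong convergence $G_d\to G_\infty$ with no control on $K_\infty$, which is exactly what the factorization $G_\infty - G_d = \calN_\infty(I-P_d)\calN_\infty^*$ is designed to circumvent.
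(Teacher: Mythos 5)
Your proposal is correct and takes essentially the same route as the paper: both rest on the Carleson embedding $\calN_\infty$ with $M(\hel)=\calN_\infty^*\calN_\infty$ (giving (1)$\Leftrightarrow$(2) via non-negativity), the factorization $G_d=\calN_\infty P_d\calN_\infty^*$ together with $P_d\to I$ strongly to obtain the strong convergence $G_d\to G_\infty=\calN_\infty\calN_\infty^*$, and the uniform boundedness principle for the converse. The only cosmetic difference is that for (3)$\Rightarrow$(2) you detour through the pushforward measures $\mu_d$ and Theorem~\ref{thm.e2}, whereas the paper reads off $\sup_d\norm{\calN_d}^2=\sup_d\norm{G_d}<\infty$ directly, checking the identity $G_d=\calN_d\calN_d^*$ on the dense class of compactly supported functions of finitely many variables supplied by the Appendix.
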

\begin{remark}
The statement is also true if in (3), strong convergence is replaced by convergence in the weak operator topology.
\end{remark} 
\begin{proof}
We follow the proof of Theorem~\ref{thm.e2}. 
Consider the operator
$$
\calN_\infty\colon  \ell^2(\bbN)\to L^2(\mu),
\quad
\{f_n\}_{n=1}^\infty \mapsto \sum_\kappa f_{n(\kappa)} t^{\kappa},
$$
which is well-defined on the dense set of finite sequences $\{f_n\}$. 
This is the embedding operator $H^2(\bbT^\infty)\hookrightarrow L^2(\mu)$ 
written in the basis $\{z^\kappa\}$. 
Since $M(\hel) \geq 0$, the condition of boundedness of $M(\hel)$ can be written as
$$
\sum_{n,m\geq1} \hel(nm)f_n\overline{f_m}
\leq 
C\norm{\{f_n\}}_{\ell^2}^2,
$$
which, using the integral expression \eqref{e1} of $\hel(nm)$, is the same as
$$
\jap{\calN_\infty\{f_n\},\calN_\infty \{f_n\}}_{L^2(\mu)}\leq C\norm{\{f_n\}}_{\ell^2(\bbN)}^2.
$$
This proves
$$
\norm{\calN_\infty}<\infty\Leftrightarrow (1) \Leftrightarrow (2). 
$$
Next, let us prove that the boundedness of $\calN_\infty$ is equivalent to (3). 
First assume that $\calN_\infty$ is bounded. 
Let $P_d \colon  \ell^2(\bbN) \to \ell^2(\bbN)$ be the projection onto the subspace of sequences 
which have non-zero coordinates only for indices $n = p^\kappa$, $\kappa = (\kappa_1, \ldots, \kappa_d, 0, 0, \ldots)$, and 
let $\calN_d = \calN_\infty P_d$. 
Since $\calN_\infty$ is bounded, then its adjoint, 
$$
\calN_\infty^*\colon  L^2(\mu)\to \ell^2(\bbN),
\quad
f\mapsto \biggl\{\int_{(-1,1)^\infty}f(t)t^{\kappa(n)} \, d\mu(t)\biggr\}_{n\in\bbN}
$$
is also bounded, and $\calN_d^* = P_d \calN_\infty^*$ converges to $\calN_\infty^*$ in the strong operator topology. In the Appendix, we show that the set of all  functions of the form 
$t \mapsto g(t_1, \ldots, t_{d'})$, $t \in (-1,1)^\infty$, $1 \leq d' < \infty$, $g$ continuous and compactly supported in $(-1,1)^{d'}$, 
 is dense in $L^2(\mu)$. For such functions $f$ it holds by computation that
$$
\calN_d \calN_d^* f (t) = G_d f(t) = \sum_{\kappa \in \bbN_0^d} \left(\int_{(-1,1)^\infty} f(s) s^\kappa \, d\mu(s) \right) t^\kappa, \qquad t \in (-1,1)^\infty,
$$
and therefore $G_d = \calN_d \calN_d^* = \calN_\infty \calN_d^*$.
Hence each operator $G_d$ is bounded and converges strongly to $G_\infty = \calN_\infty \calN_\infty^*$, so we get (3). 

Conversely, assume (3). 
Note that the adjoint operators $\calN_d^*$ always are densely defined (on the dense set of functions described in the Appendix).
Hence, if the operators $G_d$ converge strongly (or weakly), then 
$$
\sup_d \|\calN_d\|^2 = \sup_d \| \calN_d \calN_d^*\| = \sup_d \|G_d\| < \infty
$$
by the uniform boundedness principle. It follows that $\calN_\infty$ is bounded. 
\end{proof}

\begin{remark}
Again, the above proof shows that when (1)--(3) hold, then 
$$
M(\hel)=\calN_\infty^*\calN_\infty \quad\text{ and }\quad G_\infty=\calN_\infty\calN_\infty^*,
$$
and so $M(\hel)$ and $G_\infty$ are unitarily equivalent
modulo kernels. 
\end{remark}

\begin{remark}
In the Helson matrix case, there is no simple coefficient condition to characterize boundedness, 
as in (5) of Theorem~\ref{thm.e1}. It was shown in \cite{BPSSV} that $M(\hel)$ is bounded and non-negative if
$$
\hel(n)=1/(\sqrt{n}\log n), \quad n \geq 2.
$$
Furthermore, by the method of \cite[Theorem 2]{BPSSV}, the operator $M(\hel')$ is unbounded if $\hel'$ is a sequence such that
$$
\lim_{n\to \infty} \left( \hel'(n) \sqrt{n} \log n\right) = \infty.
$$
On the other hand, let 
$$
\alpha(n)=(1+\kappa_1)^{-1}, \quad\text{ if } n=2^{\kappa_1},\quad \kappa_1\geq0,
$$
and $\alpha(n)=0$ otherwise. 
Then $M(\alpha)$ can be represented as an infinite tensor product of Hankel matrices,
$$
M(\hel) = H_1(\gamma) \otimes H_1(\delta) \otimes H_1(\delta) \otimes \cdots,
$$
where $\gamma(j)=(1+j)^{-1}$ and $\delta$ is the Kronecker's symbol: $\delta(0)=1$ and 
$\delta(j)=0$ for $j > 0$. Since 
$H_1(\gamma)=\{(1+j+k)^{-1}\}_{j,k=0}^\infty$ is the usual Hilbert matrix we see that 
$M(\hel)$ is bounded, while the sequence $\hel$ satisfies
$$
\hel(n) = \frac{1}{1+\log_2 n} \gg \frac{1}{\sqrt{n}\log n}, \quad\text{ if } n=2^{\kappa_1},\quad \kappa_1\geq0.
$$
\end{remark}

\begin{remark}
If $M(\hel)$ is as in Theorem~\ref{thm:carleson}, then 
$M(\hel)$ is realized by the integral operator $H \colon  H^2(\mathbb{T}^\infty) \to H^2(\mathbb{T}^\infty)$, 
$$
Hf(z) = \int_{(-1,1)^\infty} f(t) K_\infty(z,t) \, d\mu(t).
$$
A direct way to see this is to consider, for polynomials $f(z) = \sum_{n=1}^\infty f_n z^{\kappa(n)}$ and $g(z) = \sum_{n=1}^\infty g_n z^{\kappa(n)}$, the computation
$$
\langle Hf, g \rangle_{H^2(\bbT^\infty)} = \sum_{n,m=1}^\infty f_n \overline{g_m} \int_{(-1,1)^\infty} t^{\kappa(n)}t^{\kappa(m)} \, d\mu(t) =  \sum_{n,m=1}^\infty f_n \overline{g_m} \hel(nm), 
$$
which shows that the matrix of $H$ is $M(\hel)$ in the basis $\{z^{\kappa(n)}\}_{n=1}^\infty$.  In particular, the analytic symbol of $M(\alpha)$ is 
$$G(z) = \int_{(-1,1)^\infty} K_\infty(z,t) \, d\mu(t).$$
If $\mu$ is a Carleson measure for $H^1(\bbT^\infty)$ (the closure of polynomials in $L^1(\bbT^\infty)$), a standard argument with the Hahn-Banach theorem shows, in the terminology of Section~\ref{sec:bddnehari}, that $M(\hel)$ has a bounded symbol. In general there are Carleson measures for $H^2(\bbT^\infty)$ which are not Carleson measures for $H^1(\bbT^\infty)$, but we do not know if these can be chosen to be concentrated on $(-1,1)^\infty$. See \cite[pp. 430--431]{BPSSV} for a related question.
\end{remark}

\section{The Bohr lift, Helson matrices, and Hankel integral operators} \label{sec:bohr}

\subsection{Helson matrices generated by measures on $(1/2,\infty)$}

The Bohr lift is the identification between
$$
f(s)=\sum_{n=1}^\infty f_n n^{-s}\quad \text{ and }\quad
F(z)=\sum_{n=1}^\infty f_n z^{\kappa(n)},
$$
where $f$ is a Dirichlet series in $s\in \bbC_0=\{s\in\bbC: \Re s>0\}$
and $F$ is a power series in $z\in\bbD^\infty$. 
Note that the Bohr lift is a ring isomorphism, i.e. it respects multiplication as well as addition. 
In order to write down the Bohr lift more formally, let us define the map
$$
\scrB: \bbC_0\to\bbD^\infty,
\quad
\scrB(s)=(p_1^{-s},p_2^{-s},p_3^{-s},\dots). 
$$
Then the Bohr lift maps $F(z)$ to $f(s)=F(\scrB(s))$. 

The Bohr lift  gives rise to the following 
natural way to generate non-negative Helson matrices.  
Consider a finite positive measure $\nu$ on $(0,\infty)$ and let
\[
\hel(n)=\int_0^\infty n^{-s}\, d\nu(s), \quad n\geq 1.
\label{a0}
\]
Any sequence of the form \eqref{a0} can be written as a moment sequence \eqref{eq:moment}
with the measure $\mu$ being the push-forward of $\nu$ by $\scrB$, i.e.
\[
\hel(n) = \int_{(0,1)^\infty} t^{\kappa(n)} \, d\mu(t), 
\qquad 
\mu(E)=\nu(\scrB^{-1}(E)), \quad E \subset (0,1)^\infty.
\label{eq:pushfor}
\]
It is easy to see that $\nu$ must be concentrated on $(1/2, \infty)$ if $\hel\in \ell^2(\bbN)$. 
Hence, when discussing the boundedness of $M(\hel)$ we can assume this condition. 
The arguments of Section~\ref{sec:bounded} yield a characterization
of bounded non-negative Helson matrices obtained in this way. 

\begin{theorem}
Let $\nu$ be a finite positive measure on $(1/2,\infty)$, and let 
\[
\hel(n)=\int_{1/2}^\infty n^{-s}\, d\nu(s), \quad n\geq 1.
\label{f1}
\]
Then 
the following are equivalent:
\begin{enumerate}[label={\rm ({\arabic*})}]
\item 
$M(\hel)$ is a bounded operator on $\ell^2(\bbN)$.
\item 
There is $C>0$ such that 
$$
\nu((1/2, s]) \leq C(s-1/2), \qquad s \to 1/2.
$$
\item 
There is $C > 0$ such that 
$$
\hel(n) \leq \frac{C}{\sqrt{n}\log n}, \qquad n \geq 2.
$$
\item
The Hankel type integral operator $G\colon L^2(\nu)\to L^2(\nu)$ given by 
$$
G\colon L^2(\nu)\to L^2(\nu), 
\quad 
(Gf)(t)=\int_{1/2}^\infty \zeta(s+t)f(s)\, d\nu(s), \quad t>1/2,
$$
is bounded on $L^2(\nu)$. Here $\zeta(t) = \sum_{n=1}^\infty n^{-t}$ is the Riemann zeta function.
\end{enumerate}
\end{theorem}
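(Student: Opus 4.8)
The plan is to reduce everything to the infinite-variable Carleson theory of Theorem~\ref{thm:carleson}. The starting point is the push-forward identity \eqref{eq:pushfor}: with $\mu=\scrB_*\nu$ the sequence $\hel$ of \eqref{f1} is exactly the moment sequence $\hel(p^\kappa)=\int_{(0,1)^\infty}t^\kappa\,d\mu(t)$ of a finite positive measure $\mu$ concentrated on $(0,1)^\infty\subset(-1,1)^\infty$ (indeed $\scrB(s)\in\bbD^\infty\cap\ell^2(\bbN)$ for every $s>1/2$). Thus $M(\hel)\geq0$ and Theorem~\ref{thm:carleson} applies: boundedness of $M(\hel)$ is equivalent to $\mu$ being a Carleson measure for $H^2(\bbT^\infty)$ and to the strong convergence of the $G_d$. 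I would first record (1)$\Leftrightarrow$(4) as a direct transfer of Theorem~\ref{thm:carleson}, and then close the remaining cases through the cycle (1)$\Rightarrow$(2)$\Rightarrow$(3)$\Rightarrow$(1).

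For (1)$\Leftrightarrow$(4) the key computation is the Euler product
$$
K_\infty(\scrB(s),\scrB(t))=\prod_{j=1}^\infty\frac{1}{1-p_j^{-s}p_j^{-t}}=\prod_{j=1}^\infty\frac{1}{1-p_j^{-(s+t)}}=\zeta(s+t),\qquad s,t>1/2,
$$
valid since $s+t>1$. The push-forward induces a unitary $U\colon L^2(\mu)\to L^2(\nu)$, $Ug=g\circ\scrB$, under which the finite-variable operator $G_d$ of Theorem~\ref{thm:carleson} becomes the integral operator on $L^2(\nu)$ with kernel $\prod_{j\leq d}(1-p_j^{-(s+t)})^{-1}$. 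These kernels are positive semi-definite and increase monotonically to $\zeta(s+t)$, so the $G_d$ form an increasing sequence of positive operators; by monotone convergence of positive operators, $\sup_d\|G_d\|<\infty$ if and only if their strong limit exists, and on the dense class of compactly supported functions of Theorem~\ref{thm:carleson} that limit is computed (by dominated convergence) to be precisely the operator $G$ of condition (4). Theorem~\ref{thm:carleson}, (1)$\Leftrightarrow$(3), then yields (1)$\Leftrightarrow$(4). The one point needing care is this identification of the abstract strong limit with the genuine $\zeta$-kernel operator, all under the unitary $U$.

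For (1)$\Rightarrow$(2) I would use only the necessary ``reproducing kernel test''. Since (1) is equivalent to $\mu$ being Carleson, I test the Carleson inequality \eqref{d1} on $f=K_\infty(\cdot,\scrB(t_0))$, $t_0>1/2$ (legitimate after approximating the kernel by its polynomial partial sums), and transfer to $\nu$ to obtain
$$
\int_{1/2}^\infty\zeta(s+t_0)^2\,d\nu(s)\leq C\,K_\infty(\scrB(t_0),\scrB(t_0))=C\zeta(2t_0).
$$
Restricting the integral to $s\in(1/2,t_0]$ and using that $\zeta$ is decreasing on $(1,\infty)$, so $\zeta(s+t_0)\geq\zeta(2t_0)$ there, I get $\zeta(2t_0)^2\,\nu((1/2,t_0])\leq C\zeta(2t_0)$, i.e. $\nu((1/2,t_0])\leq C/\zeta(2t_0)$. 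Since $\zeta$ has a simple pole at $1$, $\zeta(2t_0)\sim(2t_0-1)^{-1}$ as $t_0\to1/2^+$, which is exactly (2).

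Finally the elementary links. For (2)$\Rightarrow$(3), substitute $x=\log n$ and $\lambda=s-1/2$ to write $\hel(n)=e^{-x/2}\int_0^\infty e^{-\lambda x}\,d\wt\nu(\lambda)$, where $\wt\nu$ is the push-forward of $\nu$ under $s\mapsto s-1/2$; the hypothesis $\wt\nu((0,\lambda])\leq C\lambda$ is an Abelian condition that, after integration by parts, gives $\int_0^\infty e^{-\lambda x}\,d\wt\nu(\lambda)=O(1/x)$, hence $\hel(n)=O(e^{-x/2}/x)=O(1/(\sqrt n\log n))$. For (3)$\Rightarrow$(1), I note that $M(\hel)$ is non-negative with non-negative entries, so its norm is computed over non-negative vectors; the bound $\hel(nm)\leq C/(\sqrt{nm}\log(nm))$ for $nm\geq2$ lets me dominate the quadratic form on indices $n,m\geq2$ by $C\|\bGank\|$, where $\bGank$ is the bounded multiplicative Hilbert matrix \eqref{multH} of \cite{BPSSV}, while the first row and column form a Hilbert--Schmidt perturbation since $\{\hel(n)\}_{n\geq1}\in\ell^2(\bbN)$. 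I expect the main obstacle to be the careful bookkeeping in (1)$\Leftrightarrow$(4): identifying the strong operator limit with the genuine $\zeta$-kernel operator and justifying the kernel approximations through the push-forward unitary $U$.
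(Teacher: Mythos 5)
Your proposal is correct, but for two of the four implications it takes a genuinely different route from the paper. The elementary links are the same: your (2)$\Rightarrow$(3) is the paper's integration-by-parts computation, and your (3)$\Rightarrow$(1) is exactly the paper's argument (entrywise domination by the multiplicative Hilbert matrix \eqref{multH} on the block $n,m\geq 2$, with the first row and column handled by $\{\hel(n)\}\in\ell^2(\bbN)$). For (1)$\Rightarrow$(2), the paper explicitly notes the route you take --- Theorem~\ref{thm:carleson} plus the kernel-testing condition of Proposition~\ref{prop:carlcond} applied to the push-forward measure \eqref{eq:pushfor} --- but then sidesteps the infinite-variable machinery with a direct computation: it tests the quadratic form on $a_n=n^{-1/2-\delta}$. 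Observe that this is literally your test in different clothes, since $\calN\{n^{-1/2-\delta}\}=\zeta(1/2+\delta+t)$ is the Bohr pull-back of the reproducing kernel at $t_0=1/2+\delta$; both end with $\nu((\tfrac12,\tfrac12+\delta])\leq C/\zeta(1+2\delta)\leq C'\delta$. The real divergence is in (1)$\Leftrightarrow$(4). The paper works entirely in one variable: it defines $\calN\colon\ell^2(\bbN)\to L^2(\nu)$, $\{a_n\}\mapsto\sum a_n n^{-t}$, and reads off $M(\hel)=\calN^*\calN$ and $G=\calN\calN^*$, so boundedness of one is equivalent to boundedness of the other with no limiting procedure; this also delivers, for free, the unitary equivalence modulo kernels of $M(\hel)$ and $G$, which the paper emphasizes as the point of the theorem (it is exploited in the companion spectral analysis of the multiplicative Hilbert matrix). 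Your detour through condition (3) of Theorem~\ref{thm:carleson}, the push-forward unitary $U$, the Euler product $K_\infty(\scrB(s),\scrB(t))=\zeta(s+t)$, and Vigier-type monotone convergence is valid but heavier, and it hides one detail you should make explicit: pointwise monotonicity of the kernels does not by itself give monotonicity of the operators in the form sense. You need that each increment is a positive semi-definite kernel, which does hold here because
$$
K_{d+1}(\scrB(s),\scrB(t))-K_d(\scrB(s),\scrB(t))
=
K_d(\scrB(s),\scrB(t))\sum_{k\geq1}p_{d+1}^{-ks}\,p_{d+1}^{-kt}
$$
is a Schur product of positive semi-definite kernels; the same telescoping gives $\zeta(s+t)-K_d(\scrB(s),\scrB(t))\succeq 0$, which is what makes your converse direction ($G$ bounded $\Rightarrow$ $\sup_d\|G_d\|\leq\|G\|$) legitimate. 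Similarly, your testing on $f=K_\infty(\cdot,\scrB(t_0))$ is justified as you indicate, since all power-series terms are non-negative on the support of $\mu$, so monotone convergence upgrades the polynomial inequality \eqref{d1}. In short: same skeleton for (2),(3), a kernel-testing variant the paper itself anticipates for (1)$\Rightarrow$(2), and a correct but more roundabout proof of (1)$\Leftrightarrow$(4), where the paper's one-variable factorization is both shorter and structurally more informative.
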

\begin{proof}
First let us prove the equivalence of (2) and (3); this is an elementary calculation. 
If (2) holds, then integrating by parts we obtain for $n \geq 2$ that
$$
\hel(n) = \log n \int_{1/2}^\infty n^{-s} \nu((1/2,s]) \, ds \leq C \log n \int_{1/2}^\infty n^{-s} (s-1/2) \, ds = \frac{C}{\sqrt{n}\log n}.
$$
Conversely, assume that (3) holds.  
For any $\delta>0$ we have
$$
\int_{1/2}^\infty n^{-s}\, d\nu(s)
\geq 
\int_{1/2}^{1/2+\delta}n^{-s}\, d\nu(s)
\geq
n^{-\frac12-\delta}\nu((\tfrac12,\tfrac12+\delta]),
$$
and so, using the bound in (3), we get
$$
\nu((\tfrac12,\tfrac12+\delta])\leq Cn^{\delta}(\log n)^{-1}, \quad n\geq2. 
$$
Choosing $n=[e^{1/\delta}]$, we obtain (2). 

Next, let us prove the implication (1)$\Rightarrow$(2).
It can be deduced from Theorem~\ref{thm:carleson} and from the
the necessary condition for a measure on $(-1,1)^\infty$ to be Carleson given by Proposition~\ref{prop:carlcond}. In fact, the measure $\mu$ of \eqref{eq:pushfor} is Carleson for $H^2(\bbT^\infty)$ if and only if (2) holds. However, it is also easy to give a direct argument.
Assume (1) and let $a_n=n^{-\frac12-\delta}$ with $\delta>0$; then 
$$
\sum_{n,m\geq1}\hel(nm)n^{-\frac12-\delta}m^{-\frac12-\delta}
\leq
C
\sum_{n\geq1}n^{-1-2\delta}=C\zeta(1+2\delta).
$$
Writing $\hel$ as a moment sequence, we also have that
\begin{align*}
\sum_{n,m\geq1}\hel(nm)n^{-\frac12-\delta}m^{-\frac12-\delta}
&=
\int_{1/2}^\infty \zeta(1/2+\delta+t)^2\,  d\nu(t)
\\
&\geq
\int_{1/2}^{1/2+\delta} \zeta(1/2+\delta+t)^2\,  d\nu(t)
\geq 
\zeta(1+2\delta)^2 \nu((\tfrac12,\tfrac12+\delta]). 
\end{align*}
Combining these inequalities, we get that
$$
\nu((\tfrac12,\tfrac12+\delta])\leq C/\zeta(1+2\delta)\leq C'\delta,
$$
as $\zeta(t)$ has a first order pole at $t=1$. 

The implication (3)$\Rightarrow$(1) follows from the 
boundedness of the multiplicative Hilbert matrix \eqref{multH} 
(which is a result of \cite{BPSSV}), and from the fact that the sequence $\{n^{-1/2}(\log n)^{-1}\}_{n=2}^\infty$
is in $\ell^2(\bbN_2)$.

All of the above proves the equivalences (1)$\Leftrightarrow$(2)$\Leftrightarrow$(3).
Finally, the equivalence (1)$\Leftrightarrow$(4) is established by the operator-theoretic argument used in Theorems~\ref{thm.e2} and \ref{thm:carleson}. This time, the operator $\calN$ is given by
$$
\calN\colon  \ell^2(\bbN)\to L^2(\nu), 
\quad 
\{a_n\}\mapsto \sum_{n\geq1} a_n n^{-t}, \quad t>1/2,
$$
with the adjoint
$$
\calN^*\colon  L^2(\nu)\to \ell^2(\bbN), 
\quad
f\mapsto \left\{\int_0^\infty n^{-t}f(t) \, d\nu(t)\right\}_{n\in\bbN}. 
$$
As before, $M(\hel) = \calN^* \calN$ is bounded if and only if $G = \calN \calN^*$ is bounded.
\end{proof}

While the equivalences (1)$\Leftrightarrow$(2)$\Leftrightarrow$(3) have appeared implicitly in previous work, 
we point out that the final part of the proof shows that the Helson matrix $M(\hel)$ and the Hankel integral operator $G$ are unitarily equivalent modulo kernels. This observation could have simplified the analysis of the spectrum of the  multiplicative Hilbert matrix which was carried out in \cite{BPSSV}.

\subsection{The multiplicative Hilbert matrix}

The multiplicative Hilbert matrix $\mathbf{H}$, given by \eqref{multH},
was introduced in \cite{BPSSV} as an analogue of the Hilbert matrix $\{1/(1+j+k)\}_{j,k\geq0}$.
The operator $\mathbf{H}$ corresponds to the choice $d\nu(s)=ds$ in \eqref{f1} (with a minor modification to the framework to eliminate $n=1$). 
In the present article's companion paper \cite{PP2}, we will make use of the unitary equivalence 
of $\mathbf{H}$ and a Hankel integral operator $G$ to sharpen the spectral analysis of \cite{BPSSV}. 

\begin{theorem}\cite{BPSSV,PP2}
The multiplicative Hilbert matrix $\mathbf{H}$ is bounded on $\ell^2(\bbN_2)$. 
Its spectrum is purely absolutely continuous (no eigenvalues, no singular continuous component),
has multiplicity one and coincides with $[0,\pi]$. 
\end{theorem}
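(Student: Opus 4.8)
The plan is to deduce the theorem from the final displayed statement of the preceding proof, namely that the multiplicative Hilbert matrix $\mathbf{H}$ is unitarily equivalent, modulo kernels, to the Hankel-type integral operator $G=\calN\calN^*$ arising from the choice $d\nu(s)=ds$ on $(1/2,\infty)$. First I would make this reduction precise. The operator $\calN\colon\ell^2(\bbN_2)\to L^2((1/2,\infty))$, $\{a_n\}\mapsto\sum_{n\ge2}a_nn^{-t}$, has $\mathbf{H}=\calN^*\calN$ as its associated Gram operator, since $\int_{1/2}^\infty(nm)^{-t}\,dt=1/(\sqrt{nm}\log(nm))$. Before transferring spectral data I would verify that both $\calN$ and $\calN^*$ are injective --- equivalently, that the exponentials $\{n^{-t}\}_{n\ge2}$ form a complete and minimal system in $L^2((1/2,\infty))$ (a Müntz--Szász type fact) --- so that $\mathbf{H}\cong G$ is a genuine unitary equivalence on the full spaces and conclusions about eigenvalues, including at the endpoint $0$, transfer without ambiguity. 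After the shift $t=1/2+x$, $G$ becomes the integral operator on $L^2((0,\infty))$ with kernel $\zeta(1+x+y)-1=\sum_{n\ge2}n^{-1-x-y}$, which is of Hankel type since it depends on $x+y$ only.

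The next step is the spectral analysis of this Hankel integral operator, for which the guiding principle is comparison with the Carleman operator $(Cf)(x)=\int_0^\infty f(y)/(x+y)\,dy$. Both kernels share the same leading singularity at the origin: since $\zeta$ has a simple pole with residue $1$ at $s=1$, one has $\zeta(1+t)-1=t^{-1}+(\gamma-1)+O(t)$ as $t\to0^+$, whereas at infinity $\zeta(1+t)-1\sim 2^{-1-t}$ decays exponentially. The Carleman operator is diagonalized by the Mellin transform to multiplication by $\pi/\cosh(\pi\xi)$, giving purely absolutely continuous spectrum $[0,\pi]$ of multiplicity \emph{two}, the two arising from the symmetry $\xi\mapsto-\xi$, itself a reflection of the homogeneous kernel being singular at both $t=0$ and $t=\infty$. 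The kernel $\zeta(1+t)-1$, by contrast, carries the $t^{-1}$ singularity only at the origin and is regular (exponentially small) at infinity. Passing to the symbol via the Laplace/Fourier transform, which sends Hankel integral operators to Hankel operators on a Hardy space of a half-plane, the single $t^{-1}$-singularity produces a symbol with exactly one jump, of size $\pi$; by the theory of Hankel operators with piecewise-continuous symbol (Power, Howland) the essential spectrum is then the segment $[0,\pi]$ of multiplicity \emph{one}. Positivity of $\mathbf{H}=\calN^*\calN\ge0$ forces this segment into $[0,\infty)$ and fixes its right endpoint as $\|\mathbf{H}\|=\pi$.

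To upgrade this to the full assertion --- purely absolutely continuous spectrum, no eigenvalues, multiplicity exactly one, and spectrum precisely $[0,\pi]$ --- I would construct an explicit generalized eigenfunction expansion, i.e.\ a unitary map from $L^2((0,\infty))$ onto $L^2([0,\pi])$ intertwining $G$ with multiplication by the spectral variable, in the spirit of Rosenblum's diagonalization of the classical Hilbert matrix. Concretely I would search for families $f_\lambda$ solving $\int_0^\infty(\zeta(1+x+y)-1)f_\lambda(y)\,dy=\lambda f_\lambda(x)$ in the distributional sense; because the kernel is not homogeneous, the pure powers $x^{-1/2+i\tau}$ that diagonalize the Carleman operator are no longer eigenfunctions, and the correct generalized eigenfunctions must instead be read off from the symbol of the associated Hankel operator. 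Establishing their completeness and orthogonality, and thereby simultaneously the absence of point and singular continuous spectrum, is the heart of the matter; the detailed construction is carried out in the companion paper \cite{PP2}.

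I expect the main obstacle to be precisely this last step. Compactness or perturbative arguments comparing $G$ with the Carleman operator are by themselves insufficient: a compact perturbation can create eigenvalues and singular continuous spectrum and could in principle disturb the delicate multiplicity count, so the purely-absolutely-continuous, simple-spectrum conclusion cannot be obtained this way and genuinely requires the explicit diagonalization. A secondary technical point is the rigorous identification of the symbol of the Hankel operator attached to $\zeta(1+x+y)-1$, together with the verification that its only singularity is the jump of size $\pi$ inherited from the pole of $\zeta$ --- one must show that the smooth, slowly varying part of the kernel contributes neither additional essential spectrum nor extra multiplicity.
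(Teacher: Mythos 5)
Your first paragraph matches the paper exactly: the paper's sole contribution to this theorem is the observation that $\mathbf{H}=\calN^*\calN$ is unitarily equivalent, modulo kernels, to the Hankel-type integral operator $G=\calN\calN^*$ with kernel $\zeta(s+t)$ arising from $d\nu(s)=ds$ on $(1/2,\infty)$, and your injectivity remark (uniqueness of Dirichlet series, so $\ker\calN=\{0\}$ on $\ell^2$) is a correct way to make the reduction clean. The theorem itself, however, is not proved in this paper; it is quoted from \cite{BPSSV,PP2}, and the paper tells you how those proofs go: ``based on perturbation theory and on comparison with certain diagonalizable integral Hankel operators on $L^2(1/2,\infty)$.''

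This is where your proposal has a genuine gap, in two respects. First, your central plan --- an explicit Rosenblum-style generalized eigenfunction expansion for the kernel $\zeta(1+x+y)-1$ --- is precisely what the paper says is \emph{not} available: ``Despite the formal similarity with the Hilbert matrix, which was diagonalized by M.~Rosenblum, we do not know how to explicitly diagonalize $\mathbf H$.'' You defer ``the heart of the matter'' to \cite{PP2}, but \cite{PP2} does not carry out such a diagonalization; it compares $G$ with an explicitly diagonalizable model Hankel operator (a kernel with the same $t^{-1}$ singularity at the origin and rapid decay at infinity) and transfers the spectral data by perturbation-theoretic/scattering arguments. So your assertion that ``the purely-absolutely-continuous, simple-spectrum conclusion cannot be obtained this way and genuinely requires the explicit diagonalization'' is the opposite of how the result is actually established: a compact perturbation alone indeed proves nothing about a.c./s.c. structure, but trace-class or smooth perturbation theory against the right model, supplemented by separate exclusion of embedded eigenvalues, does, and that is the route of \cite{BPSSV,PP2}. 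Second, a smaller but real slip: from essential spectrum $[0,\pi]$ and positivity you conclude $\|\mathbf{H}\|=\pi$, but positivity does not exclude discrete eigenvalues above $\pi$; at that point of your argument the right endpoint requires either the norm bound $\|\mathbf{H}\|\leq\pi$ (proved directly in \cite{BPSSV} via the integral representation) or the as-yet-unproven absence of eigenvalues, so as written the inference is circular. Your heuristic symbol analysis (one jump of size $\pi$ from the pole of $\zeta$, hence a single band, versus the Carleman operator's two) correctly identifies the mechanism behind multiplicity one, but as you yourself note it only controls the essential spectrum and would still have to be converted into a rigorous multiplicity and a.c.-type statement --- which again happens via model comparison, not via completeness of an explicit eigenfunction family.
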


Despite the formal similarity with the Hilbert matrix, which was diagonalized by M.~Rosenblum,
we do not know how to explicitly diagonalize $\mathbf H$. 
The analysis of \cite{BPSSV,PP2} is based on perturbation theory and
on comparison with  certain diagonalizable integral Hankel operators on $L^2(1/2,\infty)$.

\section{Finite rank Helson matrices} \label{sec:finiterank} 

\subsection{Hankel and Helson forms}
In this section it will be convenient to deal with bilinear (rather than sesquilinear, as in the rest of the paper) 
forms corresponding to Hankel and Helson matrices. We give the appropriate definitions for the case of Helson matrices. The corresponding notions for Hankel matrices are obtained by simply restricting the number of variables.

Suppose as before that $\hel$ is a sequence, generating the Helson matrix $M(\hel)$. 
Let $z = (z_1, z_2, \ldots)$ and let $\bbC[z]$ be the ring of polynomials in the variables $z_1, z_2, \ldots$
For $f, g \in \mathbb{C}[z]$, $f(z) = \sum f_n z^{\kappa(n)}$, $g(z) = \sum g_n z^{\kappa(n)}$, 
define the Helson form $[\cdot, \cdot]_\hel$ by 
$$
[f, g]_\hel = \sum_{n,m \geq 1} f_n g_m \hel(nm).
$$
It is clear that the form $[\cdot, \cdot]_\hel$ satisfies the property
\begin{equation}
[f,g]_\hel = [fg,1]_\hel, \quad f,g \in \bbC[z].
\label{eq:f11}
\end{equation}
Conversely, if $[\cdot, \cdot] \colon  \bbC[z] \times \bbC[z] \to \bbC$ is a bilinear form 
satisfying the property 
$$
[f,g] = [fg,1], \quad f,g \in \bbC[z],
$$ 
then, expanding $f$ and $g$, it is easy to see that this form can be written as 
$[f,g] = [f,g]_\hel$ for the sequence $\hel$ given by $\hel(n) = [z^n, 1]$, $n\geq 1$.
 
One of the aims of introducing the above definition is to enable one to interpret the finite rank 
property of $M(\hel)$ as broadly as possible, and in particular, to separate it from the boundedness of $M(\hel)$. 
We will say that  
$M(\hel)$ (and the form $[\cdot, \cdot]_\hel$) \emph{has finite rank} if the subspace 
$$
\ker [\cdot, \cdot]_\hel = \{f \in \mathbb{C}[z] \, : \, [f,g] = 0, \; \forall g \in \mathbb{C}[z]\}
$$
has finite codimension in $\bbC[z]$.
If the matrix $M(\hel)$ maps finitely supported sequences into $\ell^2(\bbN)$, this corresponds to 
the usual notion of finite rank of the operator $M(\hel) \colon  \ell^2(\bbN) \to \ell^2(\bbN)$. 

A crucial observation is that the property \eqref{eq:f11} implies that  $\ker [\cdot, \cdot]_\hel$ is not only a
vector subspace of $\bbC[z]$, but also an ideal. 
Thus, understanding the algebraic structure of finite rank Hankel forms 
is really a problem in commutative algebra, namely that of describing finite-codimensional ideals in $\bbC[z]$. 
This was observed by Power \cite{Pow82} in the case of finitely many variables.

\subsection{Finite rank Hankel matrices}
Suppose that $\{\han(j)\}_{j=0}^\infty$ is a sequence with the corresponding classical Hankel form
$$
[f,g]_\han = \sum_{j,k \geq 0} f_j g_k \han(j+k),
$$
where $f(w) = \sum_{j} f_j w^j$ and $g(w) = \sum_{j} g_j w^j$ are polynomials 
in the single complex variable $w$. 
Kronecker's theorem on Hankel matrices is usually stated by saying that $[\cdot, \cdot]_\han$ (and equivalently, $H_1(\han)$) has finite rank if and only if the analytic symbol $B$ of $[\cdot, \cdot]_\han$ is rational, where
$$
B(w) = \sum_{j=0}^\infty \overline{\han(j)} w^j. 
$$
Here we have changed the definition of the analytic symbol slightly to fit the bilinearity of $[\cdot, \cdot]_\han$. 
In the notation of Section~\ref{sec:finitenehari} we have that $B(w) = \overline{{b}(\overline{w})}$. 
Note that if all the poles of $B$ lie outside the closed unit disk, then 
$$
\qquad [f, g]_\han = \langle fg, B \rangle_{H^2(\bbT)}, \qquad f,g \in \bbC[w],
$$
and in this case $H_1(\han) \colon  \ell^2(\bbN_0) \to \ell^2(\bbN_0)$ is a bounded operator.

Since we are aiming to work in infinitely many variables, we want to avoid relying on the notion of rational functions. By decomposing $B$ into ``partial fractions'' Kronecker's theorem may be stated as follows.
\begin{theorem}[Kronecker]\label{thm.kronecker}
Suppose that $\lambda_1, \ldots, \lambda_N \in \mathbb{C}$ are a finite number of distinct points, and associate to each $\lambda_j$ a number of constants $c_j(0), c_j(1), \ldots, c_j(k_j)$, where $k_j < \infty$ and $c_j(k_j) \neq 0$. Let $[\cdot, \cdot]$ be the classical Hankel form
$$[f,g] = \sum_{j=1}^N \sum_{k=0}^{k_j} c_j(k) \partial_w^k (fg) (\lambda_j), \qquad f, g \in \bbC[w].$$
Then $[\cdot, \cdot]$ is of finite rank $\sum_{j=1}^N (k_j+1)$ and its analytic symbol is the rational function 
$$
B(w) = \sum_{j=0}^\infty \overline{\han(j)} w^j 
= 
\sum_{j=1}^N \sum_{k=0}^{k_j} \overline{c_j(k)} \partial_{\overline{\lambda}}^k K_1 (w, \lambda_j), \qquad |w| < \min_{1\leq j \leq N} \frac{1}{|\lambda_j|},
$$
where $\han(j) = [w^j, 1]$ and $K_1(w,\lambda) = \frac{1}{1-w\overline{\lambda}}$.
Conversely, any finite rank classical Hankel form can be written in this way. 

The form $[\cdot,\cdot]$ generates a bounded operator $H_1(\han) \colon  \ell^2(\bbN_0) \to \ell^2(\bbN_0)$, 
$\han=\{\han(j)\}_{j=0}^\infty$, if and only if $\lambda_j \in \mathbb{D}$ for $1 \leq j \leq N$. 
In this case, $K_1(w,\lambda)$ can be understood as the reproducing kernel of $H^2(\bbT)$.
\end{theorem}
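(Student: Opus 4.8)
The plan is to package the bilinear form through the single linear functional $L(h)=[h,1]$, so that $[f,g]=L(fg)$ by \eqref{eq:f11}, and to exploit two structural facts: that $\bbC[w]$ is a principal ideal domain, and that its finite-dimensional Artinian quotients $\bbC[w]/(q)$ are Gorenstein, so that a symmetric form $L(\,\cdot\,)$ on such a quotient is nondegenerate exactly when $L$ does not vanish on the socle. I would prove the theorem in three stages: the forward direction (rank and symbol of the given form), the converse (every finite rank Hankel form has the stated shape), and the boundedness dichotomy. The step I expect to be the crux is this Gorenstein/socle duality, since it is precisely what converts the algebraic finite-rank hypothesis into the analytic data $(\lambda_j, c_j(k))$ with the normalization $c_j(k_j)\neq0$.

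\textbf{Forward direction.} Set $q(w)=\prod_{j=1}^N(w-\lambda_j)^{k_j+1}$ and $I=(q)$. First I would verify $I\subseteq\ker[\cdot,\cdot]$: if $q\mid f$ then $f$ vanishes to order $\geq k_j+1$ at each $\lambda_j$, so the Leibniz rule gives $\partial_w^k(fg)(\lambda_j)=0$ for all $k\leq k_j$ and all $g$, whence $[f,g]=0$. Thus the form descends to $A=\bbC[w]/I$, and by the Chinese Remainder Theorem $A\cong\prod_j\bbC[w]/((w-\lambda_j)^{k_j+1})$, with the descended form block-diagonal along this decomposition because distinct factors annihilate one another. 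Each block $A_j=\bbC[t]/(t^{k_j+1})$, $t=w-\lambda_j$, is local Artinian with one-dimensional socle spanned by $t^{k_j}$, and $L_j(t^{k_j})=c_j(k_j)\,k_j!\neq0$. Since the radical of the block form is an ideal and every nonzero ideal of a local Artinian algebra contains the socle, nonvanishing of $L_j$ on the socle forces the radical to be zero; hence each block, and therefore $A$, carries a nondegenerate form. Consequently $\ker[\cdot,\cdot]=I$ exactly and the rank equals $\dim_{\bbC} A=\deg q=\sum_j(k_j+1)$. The symbol identity is then a power-series computation: from $\han(n)=L(w^n)=\sum_{j,k}c_j(k)\tfrac{n!}{(n-k)!}\lambda_j^{\,n-k}$ and $\partial_{\overline{\lambda}}^k K_1(w,\lambda)=\sum_{m\geq k}\tfrac{m!}{(m-k)!}w^m\overline{\lambda}^{\,m-k}$, matching coefficients gives $B(w)=\sum_{j,k}\overline{c_j(k)}\,\partial_{\overline{\lambda}}^k K_1(w,\lambda_j)$, convergent for $|w|<\min_j|\lambda_j|^{-1}$.

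\textbf{Converse.} Let $[\cdot,\cdot]$ be an arbitrary finite rank classical Hankel form. Its kernel is a finite-codimensional ideal of $\bbC[w]$, hence principal, $\ker[\cdot,\cdot]=(q)$ with $q\neq0$ and $\codim=\deg q$. Factoring $q(w)=c\prod_j(w-\lambda_j)^{m_j}$, the functional $L$ annihilates $(q)$ and so factors through $A=\bbC[w]/(q)\cong\prod_j\bbC[w]/((w-\lambda_j)^{m_j})$. On the $j$-th local factor the functionals $h\mapsto\partial_w^k h(\lambda_j)$, $0\leq k\leq m_j-1$, form a dual basis, so $L=\sum_j\sum_{k=0}^{m_j-1}c_j(k)\,\partial_w^k(\cdot)(\lambda_j)$ for some constants. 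Because $(q)$ is exactly the radical, the induced form on $A$ is nondegenerate, and running the socle argument in reverse forces $c_j(m_j-1)\neq0$ for every $j$. Setting $k_j=m_j-1$ yields the claimed representation.

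\textbf{Boundedness.} Writing the index as $n$, the representation gives $\han(n)=\sum_l P_l(n)\lambda_l^{\,n}$ for large $n$, where $P_l$ has degree exactly $k_l$ with leading coefficient a nonzero multiple of $c_l(k_l)$ (terms with $\lambda_l=0$ contribute only finitely often). If every $\lambda_l\in\bbD$, then $\han(n)\to0$ geometrically and $B$ is rational with all poles $1/\overline{\lambda_l}$ outside the closed disk, so $B\in H^\infty(\bbT)$; since $[f,g]=\jap{fg,B}_{H^2(\bbT)}$, the easy direction of Nehari's theorem (Theorem~\ref{thm.b1}, $d=1$) gives boundedness of $H_1(\han)$, and each $K_1(\cdot,\lambda_l)$ is then a genuine reproducing kernel in $H^2(\bbT)$. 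Conversely, if $R=\max_l|\lambda_l|\geq1$, the top-order behavior of $\han(n)$ is controlled by the finite exponential sum $\sum_{l:\,|\lambda_l|=R,\,\deg P_l=d}(\text{leading coeff})\,e^{in\theta_l}$ with distinct frequencies $\theta_l=\arg\lambda_l$; Cesàro averaging against $e^{-in\theta_{l_0}}$ shows this sum cannot tend to $0$, so $\han(n)\not\to0$. Hence the first column $\{\han(n)\}_n$ is not in $\ell^2(\bbN_0)$ and $H_1(\han)$ fails to define a bounded operator on $\ell^2(\bbN_0)$, completing the dichotomy.
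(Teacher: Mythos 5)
Your proof is correct, but note that the paper itself gives no proof of Theorem~\ref{thm.kronecker}: it is stated as classical background (with the multivariable extension, Theorem~\ref{thm.power}, cited from Power), so there is no in-paper argument to match against. Your route is nonetheless worth comparing with how the paper handles the analogous questions later. Your central device --- the kernel of the form is a finite-codimensional ideal, so in $\bbC[w]$ it is principal, $(q)$, and the Chinese Remainder Theorem reduces everything to local blocks $\bbC[t]/(t^{k_j+1})$ on which nondegeneracy is tested against the one-dimensional socle --- is exactly the one-variable specialization of the strategy the paper deploys for Helson forms: compare the use of \eqref{eq:f11} to see that $\ker[\cdot,\cdot]$ is an ideal, and Lemma~\ref{lem:finitevarrep}, which describes functionals annihilating finite-codimensional ideals as sums of point evaluations of derivatives. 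What your socle/Gorenstein argument buys, beyond what the paper records, is the two sharp normalizations in the statement: the exact rank $\sum_j(k_j+1)$ (via $\ker[\cdot,\cdot]=(q)$ exactly, since a nonzero radical of the descended form would be an ideal of the local block and hence contain the socle, where $L_j(t^{k_j})=c_j(k_j)\,k_j!\neq0$) and, run in reverse, the condition $c_j(k_j)\neq0$ in the converse --- the paper's multivariable statements deliberately avoid both, as no closed rank formula exists for $d\geq2$. Your boundedness dichotomy is also sound and is your own addition: the bounded direction is the easy half of Nehari (Theorem~\ref{thm.b1}, $d=1$, as the paper notes when the poles of $B$ lie outside the closed disk), and the unbounded direction correctly reduces to showing $\han(n)\not\to0$ when $\max_l|\lambda_l|\geq1$, with the Ces\`aro averaging against $e^{-in\theta_{l_0}}$ handling the unimodular case where distinct points on the circle of maximal modulus have distinct arguments; since a bounded Hankel operator must map $e_0$ to the $\ell^2$ sequence $\{\han(j)\}_{j\geq0}$, this suffices. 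The only cosmetic caveats: when some $\lambda_j=0$ the corresponding term contributes to $\han(n)$ only for finitely many $n$ (which you flagged), and the radius $\min_j|\lambda_j|^{-1}$ should be read as $+\infty$ when all $\lambda_j=0$.
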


Except for the rank formula, Power \cite{Pow82} gave a direct generalization of Kronecker's theorem to Hankel forms on $\bbC[z_1, \ldots, z_d] \times \bbC[z_1, \ldots, z_d]$, $d < \infty$. 
Below we use the multi-index notation for differential operators in the standard way, i.e.
$$
\partial_z^\sigma = \partial_{z_1}^{\sigma_1}\cdots \partial_{z_d}^{\sigma_d}, 
\quad
\sigma=(\sigma_1,\dots,\sigma_d)\in\bbN_0^d.
$$
\begin{theorem}\cite{Pow82}\label{thm.power}
Let $1 \leq d < \infty$, and let $[\cdot,\cdot]$ be a Hankel form on 
$\bbC[z_1, \ldots, z_d] \times \bbC[z_1, \ldots, z_d]$. 
This form has finite rank if and only if it can be represented as a finite sum
$$
[f,g] = \sum_{\ell} 
c^{(\ell)}\partial_z^{\sigma^{(\ell)}} (fg) (\lambda^{(\ell)}), 
\qquad f, g \in \bbC[z_1, \ldots, z_d],
$$
where 
$\lambda^{(\ell)}\in\mathbb{C}^d$,  $c^{(\ell)}\in \bbC$, and $\sigma^{(\ell)}\in \bbN_0^d$.  
\end{theorem}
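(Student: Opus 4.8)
The plan is to recast the finite rank property as a statement about a finite-codimensional ideal, and then apply the structure theory of Artinian $\bbC$-algebras together with Macaulay-type duality between linear functionals and differential operators. Write $R = \bbC[z_1,\dots,z_d]$ and introduce the linear functional $L(f) = [f,1]$; by the Hankel property \eqref{eq:f11} the form is recovered as $[f,g] = L(fg)$. Setting $I := \ker[\cdot,\cdot] = \{f \in R : fg \in \ker L \text{ for all } g \in R\}$, one checks immediately that $I$ is an ideal, in fact the largest ideal contained in $\ker L$, that $L$ factors through $R/I$ (take $g=1$), and that, by definition, the form has finite rank precisely when $\codim_\bbC I = \dim_\bbC R/I < \infty$. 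The theorem thereby reduces to describing which functionals $L$ arise from finite-codimensional $I$.

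First I would dispose of the easy implication, that a form of the displayed type has finite rank. Given $L(f) = \sum_\ell c^{(\ell)} \partial_z^{\sigma^{(\ell)}} f(\lambda^{(\ell)})$, set $k = \max_\ell \abs{\sigma^{(\ell)}}$ and write $\mathfrak m_\lambda \subset R$ for the maximal ideal of polynomials vanishing at $\lambda \in \bbC^d$. Every $f$ in the ideal $J := \bigcap_\ell \mathfrak m_{\lambda^{(\ell)}}^{k+1}$ satisfies $\partial_z^\sigma f(\lambda^{(\ell)}) = 0$ for all $\abs{\sigma} \le k$; since $J$ is an ideal the same holds for each product $fg$, so $[f,g] = L(fg) = 0$ and hence $J \subseteq I$. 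As $J$ has finite codimension, so does $I$, and the form is of finite rank.

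The substance is the converse. Assuming $\dim_\bbC R/I < \infty$, the ring $R/I$ is Artinian, so (using that $\bbC$ is algebraically closed) its maximal spectrum is a finite set of points $\lambda^{(1)},\dots,\lambda^{(N)} \in \bbC^d$, and the Chinese Remainder Theorem gives a decomposition $R/I \cong \prod_{\ell=1}^N R/Q_\ell$ into local Artinian rings, where $Q_\ell$ is $\mathfrak m_{\lambda^{(\ell)}}$-primary and therefore contains a power $\mathfrak m_{\lambda^{(\ell)}}^{k_\ell+1}$. Correspondingly $L$ splits as $L = \sum_\ell L_\ell$, with each $L_\ell$ annihilating $\mathfrak m_{\lambda^{(\ell)}}^{k_\ell+1}$ and hence factoring through the finite-dimensional Taylor quotient $R/\mathfrak m_{\lambda^{(\ell)}}^{k_\ell+1}$. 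The final ingredient is the duality statement (Macaulay inverse systems / apolarity): the dual of $R/\mathfrak m_\lambda^{k+1}$ is spanned by the functionals $f \mapsto \partial_z^\sigma f(\lambda)$ with $\abs{\sigma} \le k$, since these are, up to the factors $\sigma!$, the basis dual to the monomials $(z-\lambda)^\sigma$. Thus each $L_\ell$ is a finite combination of derivatives at $\lambda^{(\ell)}$, and assembling the pieces yields $L(f) = \sum c^{(\ell)} \partial_z^{\sigma^{(\ell)}} f(\lambda^{(\ell)})$, whence $[f,g] = L(fg) = \sum c^{(\ell)} \partial_z^{\sigma^{(\ell)}}(fg)(\lambda^{(\ell)})$ after relabelling.

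The hard part will be the commutative algebra in this converse: correctly invoking the decomposition of the Artinian algebra $R/I$ into local factors and, above all, the Macaulay duality identifying functionals that kill a power of $\mathfrak m_\lambda$ with constant-coefficient differential operators evaluated at $\lambda$. Everything else is elementary. I would also note that the resulting representation is far from unique, but since the theorem asserts only existence of some finite representation this is harmless; in particular no rank formula is claimed here, in contrast with the one-variable Theorem~\ref{thm.kronecker}.
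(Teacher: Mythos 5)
Your proof is correct, but note that the paper never proves Theorem~\ref{thm.power} itself: it is quoted from Power \cite{Pow82}, and the only ingredient the paper records is Lemma~\ref{lem:finitevarrep} (a functional vanishing on a finite-codimensional ideal of $\bbC[z_1,\dots,z_d]$ is a finite sum of derivative evaluations), which is likewise cited without proof from \cite{Pow82} and \cite{JPR}. Your argument supplies exactly that missing content, by what is essentially the standard route: $\ker[\cdot,\cdot]$ is an ideal $I$ on which $L(f)=[f,1]$ vanishes (the paper makes the same observation via $[f,g]=[fg,1]$), the algebra $\bbC[z_1,\dots,z_d]/I$ is Artinian, the Chinese Remainder Theorem splits it into local factors whose kernels $Q_\ell$ are $\mathfrak{m}_{\lambda^{(\ell)}}$-primary by the Nullstellensatz and hence contain $\mathfrak{m}_{\lambda^{(\ell)}}^{k_\ell+1}$, and a functional killing $\mathfrak{m}_{\lambda}^{k+1}$ is a combination of $f\mapsto\partial_z^\sigma f(\lambda)$ with $\abs{\sigma}\le k$ — which, as you note, needs only the Taylor dual basis to the monomials $(z-\lambda)^\sigma$ in $R/\mathfrak{m}_\lambda^{k+1}$, not the full Macaulay inverse-system machinery. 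One small point of hygiene: when splitting $L=\sum_\ell L_\ell$ you should make the definition explicit, e.g. $L_\ell(f)=L(e_\ell \bar f)$ where $e_\ell$ are the CRT idempotents and $\bar f$ the class of $f$ in $R/I$, so that $L_\ell$ visibly annihilates $Q_\ell$; as written the splitting is asserted rather than constructed, though the fix is one line. Your easy direction also parallels the ``if'' argument the paper gives for the infinite-variable analogue in Theorem~\ref{thm:helsonkronecker}(i), with the pleasant simplification that in finitely many variables the finite-codimensional ideal inside the kernel can be taken to be $\bigcap_\ell \mathfrak{m}_{\lambda^{(\ell)}}^{k+1}$ outright, rather than the intersection of subspaces cut out by vanishing of sub-products of directional derivatives. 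Finally, you are right that no rank formula should be claimed: the paper explicitly remarks that, unlike the $d=1$ case of Theorem~\ref{thm.kronecker}, computing the exact rank for $d\ge 2$ is a hard combinatorial problem.
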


Clearly, a finite rank Hankel form $[\cdot, \cdot]$ on $d$ variables generates a bounded Hankel matrix $H_d(\han) \colon  \ell^2(\bbN_0^d) \to \ell^2(\bbN_0^d)$, $\han(j) = [z^j, 1]$, $j \in \bbN_0^d$, if and only if each $\lambda^{(\ell)} \in \bbD^d$ (assuming that the terms associated with the point $\lambda^{(\ell)}$ do not cancel each other out). Its rational symbol can be obtained as in Kronecker's theorem. We defer the precise statement to the discussion of Helson matrices.

Finding a closed formula for the precise rank of a Hankel form is a difficult combinatorial problem already when $d=2$, see \cite[p. 243]{Pow82} for an enlightening example. Gu \cite{Gu} gave an approach in which computing the rank of a Hankel form on $d$ variables is reduced to computing the rank of several Hankel forms with matrix valued symbols on $d-1$ variables.

\subsection{Finite rank Helson matrices: preliminaries}
In the case that $M(\hel) \colon  \ell^2(\bbN) \to \ell^2(\bbN)$ is bounded and non-negative, the content of Section~\ref{sec:nonneg} yields a characterization of when $M(\hel)$ is of finite rank. 
For if $M(\hel)$ has finite rank, then the Hilbert space $\mathcal{H}$ which appears in the proof sketch of Theorem~\ref{thm:moment} is finite-dimensional. 
Therefore, the moment measure of $\{\hel(n)\}_{n=1}^\infty$ is finitely supported. 
Let the support of the measure consist of the finitely many points  
$\lambda^{(\ell)}=(\lambda^{(\ell)}_1,\lambda^{(\ell)}_2,\dots)\in\bbR^\infty$. 
Then $\hel$ can be represented as a finite sum
$$
\hel(p^\kappa) = \sum_{\ell} c^{(\ell)} (\lambda^{(\ell)})^{\kappa}
$$
with some positive coefficients $c^{(\ell)}$.
Since $M(\hel)$ is bounded, we see that actually 
$\lambda^{(\ell)} \in (-1, 1)^\infty \cap \ell^2(\bbN)$, 
and the analytic symbol of $M(\hel)$ is the function
$$
A(z) = \sum_{\ell} c^{(\ell)} K_\infty(z, \lambda^{(\ell)}),
$$
where $K_\infty(z, \lambda)$ is the reproducing kernel of $H^2(\bbT^\infty)$ (see Section~\ref{sec:prelimhardy}).
Here, again, we refer to $A$ as the analytic symbol in the bilinear sense, meaning that
$$
[f,g]_\hel = \langle fg, A \rangle_{H^2(\mathbb{T}^\infty)} 
= 
\sum_{\ell} c^{(\ell)} f(\lambda^{(\ell)}) g(\lambda^{(\ell)}), \qquad f, g \in \bbC[z].
$$

The characterization of general finite rank Helson forms 
is rather more involved. Before giving a statement, we discuss several examples.
\begin{example}
As in the classical one variable situation, the evaluation of derivatives can also induce finite rank Helson forms. 
The major distinction between Hankel forms on $\bbC[z_1, \ldots, z_d]$, $1 \leq d < \infty$, and our setting is that Helson forms on $\bbC[z]$ have room for directional derivatives in directions which are not finitely supported. 
For a sequence $\{c(j)\}_{j=1}^\infty$ and a point $\lambda \in \bbC^\infty$, consider the Helson form
$$
[f,g] = \sum_{j=1}^\infty c(j) \partial_{z_j}(fg)(\lambda).
$$
We think of $f \mapsto [f,1]$ as the derivative of $f$ in the direction $\{c(j)\}_{j=1}^\infty$ at the point $\lambda$. This directional derivative defines a finite rank Helson form. Indeed,
$$\ker [\cdot, \cdot] = \{f \in \bbC[z] \, : \, f(\lambda) =  \sum_{j=1}^\infty c(j) \partial_{z_j}f(\lambda) = 0 \},$$
and this ideal has codimension $2$. Later we shall see that $[\cdot, \cdot]$ defines a bounded Hankel form on $H^2(\mathbb{T}^\infty) \times H^2(\mathbb{T}^\infty)$ (i.e. $M(\hel) \colon  \ell^2(\bbN) \to \ell^2(\bbN)$ is bounded) if and only if $\{c(j)\}_{j=1}^\infty \in \ell^2(\bbN)$ and $\lambda \in \bbD^\infty \cap \ell^2(\bbN)$. If this is the case, the analytic symbol of the form is
$$
A(z) = \sum_{j=1}^\infty \overline{c(j)} \partial_{\bar{\lambda}_j} K_\infty(z, \lambda).
$$
\end{example}
\begin{example}
The Dirichlet series point of view \cite{HLS} furnishes natural examples. 
As in Section~\ref{sec:bohr}, let us identify a polynomial $F(z)$, $z\in\bbD^\infty$,  
with the Dirichlet polynomial $f(s)=F(\scrB(s))$. 
Under this identification, $H^2(\mathbb{T}^\infty)$ is transformed into the Hardy space $\mathscr{H}^2$ of Dirichlet series, which is a holomorphic function space on the half-plane $\bbC_{1/2} = \{s \in \bbC \, : \, \Re s > 1/2\}$.
 Let $Df = f'$ be the differentiation operator in the half-plane variable $s$.   In terms of the polynomial notation,
$$Df(z) = -\sum_{j=1}^\infty (\log p_j) z_j 
	\partial_{z_j} f(z).$$
Let $\rho$ be a point in $\mathbb{C}$ (corresponding to the point $\{p_j^{-\rho}\}_{j=1}^\infty \in \bbC^\infty$) and let $n$ be a positive integer.  Then the Helson form
$$[f,g] = D^n(fg)(\rho)$$
has kernel
$$\ker [\cdot, \cdot] = \{f \in \bbC[z] \, : \, D^{k}f(\rho) = 0, \; 0 \leq k \leq n\},$$
of codimension $n+1$. 	The Helson form defined in this way is bounded if and only if $\rho \in \bbC_{1/2}$.

\end{example}
\begin{example} \label{ex:nonfinite}
Not every differentiation operator induces a finite rank form. For coefficients $\{c(j)\}_{j=1}^\infty$ and a point $\lambda \in \bbC^\infty$, consider the Helson form
$$[f,g] = \sum_{j=1}^\infty c(j) \partial_{z_j}^2(fg)(\lambda).$$
Then 
$$
\ker [\cdot, \cdot] = \{f \in \bbC[z] \, : \, 
f(\lambda) =  \sum_{j=1}^\infty c(j) \partial_{z_j}^2f(\lambda) 
= 
c(k)\partial_{z_k}f(\lambda) = 0, \; 1 \leq k < \infty \},
$$
which in general does not have finite codimension.
\end{example}

\subsection{Differential operators on $\bbC^\infty$}

Before stating the main theorem of this section,
we need to introduce some definitions. 
For a sequence $c = \{c(j)\}_{j\in\bbN}$ of complex numbers, we say that
$$
D(c) f = \sum_{j=1}^\infty c(j) \partial_{z_j} f
$$
is a first order differential operator; this is a well-defined linear operation on polynomials $f\in\bbC[z]$. 
Given non-zero sequences $c_1, \ldots, c_M$ we call
$$
\bD(\bc) = D(c_1)\cdots D(c_M), 
$$
a \textit{factorizable} differential operator of order $M$, $\ord(\bD(\bc))=M$, 
with $\bc=(c_1,\dots,c_M)$. 
We also consider multiplication by a constant to be a factorizable differential operator of order $0$. 
It is useful to think of $D({c_m})$ as a directional derivative, 
and we call $c_m$ a direction of differentiation of $\bD({\bc})$. 
For $\bc=(c_1,\dots,c_M)$  as above, we write $\bc\in\ell^2$, if $c_m\in \ell^2(\bbN)$ for each $m=1,\dots,M$.

The operator
$\bD({\bc})$ is uniquely defined by its action on the homogeneous polynomials of order $M$, 
and we may therefore think of $\bD({\bc})$ as a symmetric tensor $\odot_{m=1}^M c_m$.  
This point of view will become important in the proof of the main theorem of this section.

\subsection{Finite rank Helson matrices: main result}

We shall now state the main theorem of this section. 
It characterizes the finite rank Helson forms as those given by a finite sum of finite products of directional derivatives. 
This explains why Example~\ref{ex:nonfinite} fails to give a finite rank Hankel form; 
it contains an infinite number of linearly independent directions $(0, \ldots, 0, c(j), 0, \ldots, 0, \ldots)$.

\begin{theorem} \label{thm:helsonkronecker}
Let $[\cdot,\cdot]$ be a Helson form on $\bbC[z]\times\bbC[z]$. Then 
\begin{enumerate}[label={\rm ({\roman*})}]
\item
The form $[\cdot,\cdot]$ has finite rank 
if and only if it can be represented as a finite sum
\begin{equation}
[f,g]
=
\sum_\ell \bD({\bc^{(\ell)}})(fg)(\lambda^{(\ell)}),
\label{eq:finiterankhankel2}
\end{equation}
where for each $\ell$, $\bD({\bc^{(\ell)}})$ is a factorizable differential operator of
a finite order and $\lambda^{(\ell)}\in \bbC^\infty$. 
\item
The form $[\cdot,\cdot]$ is bounded (i.e. defines a bounded Helson matrix) and of finite rank 
if and only if the representation \eqref{eq:finiterankhankel2} 
can be chosen such that for all $\ell$, one has
$\lambda^{(\ell)} \in \bbD^\infty \cap \ell^2$ and  $\bc^{(\ell)}\in \ell^2$. 
\end{enumerate}
\end{theorem}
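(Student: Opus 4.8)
The plan is to reduce the whole statement to commutative algebra and then to the Hardy-space estimates of Section~\ref{sec:prelimhardy}. The starting point is the observation already made in the text that $\ker[\cdot,\cdot]$ is a finite-codimensional ideal $J\subseteq\bbC[z]$, and that the form is recovered from the single linear functional $L(h)=[h,1]$ via $[f,g]=L(fg)$. Thus part (i) amounts to describing all finite-codimensional ideals of $\bbC[z]$ together with the functionals on the finite-dimensional quotient $\bbC[z]/J$, while part (ii) asks which of these extend boundedly to $H^2(\bbT^\infty)$.

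For the easy implication in (i) I would argue as follows. Given $[f,g]=\sum_\ell \bD(\bc^{(\ell)})(fg)(\lambda^{(\ell)})$, let $W\subseteq\bbC^\infty$ be the finite-dimensional span of all directions $c_m^{(\ell)}$, set $r=\dim W$, and let $M=\max_\ell\ord\bD(\bc^{(\ell)})$. Truncating at order $M$ the Taylor expansion of a polynomial along each affine subspace $\lambda^{(\ell)}+W$ defines an algebra homomorphism $\Phi$ from $\bbC[z]$ into the finite-dimensional commutative algebra $\mathcal A=\prod_\ell \bbC[x_1,\dots,x_r]/(x)^{M+1}$, through which $L$ factors as $L=\bar L\circ\Phi$. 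Since $\Phi$ is an algebra homomorphism, $\ker[\cdot,\cdot]=\Phi^{-1}(\{a:\bar L(ab)=0\ \forall b\in\Phi(\bbC[z])\})$ contains $\ker\Phi$, and therefore has codimension at most $\dim\mathcal A<\infty$.

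For the converse in (i) the plan is structural. First I would write the finite-dimensional algebra $\bbC[z]/J$ as a product of local Artinian algebras; since $\bbC$ is algebraically closed each factor corresponds to a point $\lambda^{(i)}\in\bbC^\infty$ (a $\bbC$-algebra map $\bbC[z]\to\bbC$), and $L$ splits accordingly, so it suffices to treat one local factor, placed at the origin after a translation. The key algebraic step is that the cotangent space $\mathfrak m/(\mathfrak m^2+J)$ is finite-dimensional, say of dimension $d$; choosing an invertible linear change of variables I may assume $z_{d+1},z_{d+2},\dots\in J+\mathfrak m^2$, and then Nakayama's lemma shows that $\bbC[z]/J$ is generated as an algebra by the images of $z_1,\dots,z_d$. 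Consequently the restriction of $[\cdot,\cdot]$ to $\bbC[z_1,\dots,z_d]$ is a finite rank Hankel form to which Power's Theorem~\ref{thm.power} applies, while the map $\bbC[z]\to\bbC[z]/J$ is realized by substituting for each $z_j$, $j>d$, a polynomial $\phi_j\in\bbC[z_1,\dots,z_d]$ of order $\geq 2$. Composing the constant-coefficient differential operator supplied by Power with this substitution and expanding by the chain rule (the Fa\`a di Bruno formula) rewrites $L$ as a finite sum of terms $\partial^{(p)}h(0)[v_{\tau_1},\dots,v_{\tau_p}]$, where each $v_\tau=(\partial^\tau\phi_m(0))_m\in\bbC^\infty$; these are exactly factorizable operators $D(v_{\tau_1})\cdots D(v_{\tau_p})$, finitely many in number and with directions in $\bbC^\infty$. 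I expect this bookkeeping --- in particular the verification that only finitely many directions survive despite the infinitely many substituted coordinates --- to be the main obstacle in part (i).

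For part (ii), the implication from an $\ell^2$-representation to boundedness should follow from the reproducing kernel formalism: when $\lambda\in\bbD^\infty\cap\ell^2$ and each $c_m\in\ell^2$, a direct computation of the $H^2(\bbT^\infty)$-norm of $\bD(\bc)_{\bar\lambda}K_\infty(\cdot,\lambda)$ (a Gram-type sum in the derivatives of $K_\infty(\lambda,\lambda)=\prod_j(1-|\lambda_j|^2)^{-1}$) shows that each functional $f\mapsto\bD(\bc^{(\ell)})f(\lambda^{(\ell)})$ is bounded on $H^2(\bbT^\infty)$; expanding $\bD(\bc)(fg)$ by the Leibniz rule then bounds the bilinear form by $\|f\|_{H^2}\|g\|_{H^2}$, so $M(\hel)$ is bounded. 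Conversely, if $M(\hel)$ is bounded then its first column $\{\hel(n)\}$ lies in $\ell^2$, so the analytic symbol $A\in H^2(\bbT^\infty)$ exists and, by part (i), equals a finite sum of derivatives of reproducing kernels centered at the points $\lambda^{(i)}$ making up the variety of $J$. Separating the contributions of these distinct points and comparing them with the finiteness of $\|A\|_{H^2(\bbT^\infty)}$ should force each $\lambda^{(i)}\in\bbD^\infty\cap\ell^2$ and each surviving direction into $\ell^2$, since bounded point evaluation and bounded directional differentiation on $H^2(\bbT^\infty)$ exist precisely for such data. Here I expect the isolation of individual points and directions from a single bounded symbol to be the main analytic difficulty.
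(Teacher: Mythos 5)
Your part (i) is correct. The ``if'' direction via the truncated-jet homomorphism $\Phi$ into $\prod_\ell \bbC[x_1,\dots,x_r]/(x)^{M+1}$ is a clean variant of the paper's argument (which instead intersects finitely many explicit linear conditions $I_m$). For the ``only if'' direction your route --- decompose the Artinian algebra $\bbC[z]/J$ into local factors, use Nakayama to see that finitely many coordinates generate, then substitute and apply Fa\`a di Bruno --- is genuinely different from, and somewhat more structural than, the paper's Lemma~\ref{lma.alg}, which avoids localization altogether: it picks $d$ so large that $\gamma\circ i_d$ is onto, defines a retraction $\omega(f)=f\circ P$ on generators, and then applies Lemma~\ref{lem:finitevarrep} and the chain rule exactly as you do. Both approaches buy the same thing (reduction to Power's Theorem~\ref{thm.power}); yours makes the geometry of the variety of $J$ explicit, the paper's is shorter. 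Your ``if'' direction of (ii), via kernel-norm estimates for each $\bD(\bc)$-functional plus the Leibniz rule, is also a workable alternative to the paper's argument, which instead bounds $|\bD(\bc)h(\lambda)|\leq C\|h\|_{H^1(\bbT^\infty)}$ by analyticity on $\bbD^\infty\cap\ell^2$ and Cauchy's formula --- note the paper's version additionally yields a bounded symbol for free.

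The ``only if'' direction of (ii), however, has a genuine gap: your sketch stops exactly where the paper's proof begins, and the inference you propose is false as stated. First, you cannot write the symbol $A$ as a sum of derivatives of reproducing kernels \emph{before} knowing admissibility of the data: for a general representation from part (i), with $\lambda^{(\ell)}\in\bbC^\infty$ and arbitrary directions, the expressions $\bD(\bc)^*K_\infty(\cdot,\lambda)$ are meaningless ($K_\infty(\cdot,\lambda)\notin H^2(\bbT^\infty)$ unless $\lambda\in\bbD^\infty\cap\ell^2$); all you know is the coefficient sequence of $A$. Second, boundedness does \emph{not} force the directions of the given representation into $\ell^2$: distinct factorizable terms of the same order at the same point can cancel, so the individual functionals $f\mapsto\bD(\bc^{(k)})f(\lambda)$ may each be unbounded while their sum is bounded. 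What is true --- and what requires proof --- is that a \emph{different} representation with $\ell^2$ data exists. This is the content of the paper's Step 1: boundedness on the homogeneous component $H^2_m(\bbT^\infty)$ shows that the associated \emph{symmetric tensor} $\bc=\sum_k c_1^{(k)}\odot\cdots\odot c_m^{(k)}$ lies in $\ell^2(\bbN^m)$ (Lemma~\ref{lma.f15}), after which the purely algebraic Lemmas~\ref{lma.f1}--\ref{lma.f3} (any element of $\ell^2(\bbN^m)\cap\ell(\bbN)^{\otimes m}$ can be re-factored with all factors in $\ell^2(\bbN)$) produce the new representation. Third, your phrase ``separating the contributions of these distinct points'' needs an actual mechanism: the paper multiplies by high powers $p^{M+2}$ of polynomials vanishing at the other points, applies the commutation formula \eqref{eq:commut} to peel off the top-order term, and runs a double induction (over the maximal order, and within a point over homogeneous degrees, Lemma~\ref{lma.f13}) to show that each piece $\Lambda^m_\ell$ is separately bounded. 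Without these three ingredients your plan does not close.
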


The proof is given in the next section. 

We shall actually show a little more than stated. 
We will prove that when $[\cdot, \cdot]$ is bounded and of finite rank, 
then the functional $f \mapsto [f,1]$ is continuous on $H^1(\bbT^\infty)$ 
(the closure of polynomials in $L^1(\bbT^\infty)$). 
By the Hahn-Banach theorem, it follows that $M(\hel)$ has a bounded symbol. 
We could have anticipated this behavior, because it is known that 
any Hilbert-Schmidt Helson matrix has a bounded symbol \cite{Helson06}, 
and a bounded finite rank Helson matrix is surely Hilbert-Schmidt.
 
In order to write down the formula for the analytic symbol of a Helson form, we 
define the adjoint operator to $\bD({\bc})$ by 
$$
D({c})^* 
= 
\sum_{j=1}^\infty \overline{c(j)} \partial_{\overline{z_j}}, 
\quad 
\bD({\bc})^*=D({c_1})^*\cdots D({c_M})^*. 
$$
\begin{corollary}\label{cor:bddsymb}
Let $[\cdot,\cdot]$ be a bounded Helson form of finite rank; suppose it is written  as in
\eqref{eq:finiterankhankel2} 
with $\lambda^{(\ell)} \in \bbD^\infty \cap \ell^2$ and  $\bc^{(\ell)}\in \ell^2$ for all $\ell$. 
Then it has the analytic symbol
$$
B(\cdot) = \sum_{\ell}
\bD({\bc^{(\ell)}})^* K_\infty(\cdot, z)|_{z=\lambda^{(\ell)}}.
$$
\end{corollary}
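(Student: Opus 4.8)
The goal is to compute the analytic symbol $B$ of a bounded finite-rank Helson form, given that such a form is already known (from Theorem~\ref{thm:helsonkronecker}(ii)) to be a finite sum of factorizable differential operators evaluated at points $\lambda^{(\ell)} \in \bbD^\infty \cap \ell^2$ with directions $\bc^{(\ell)} \in \ell^2$. Recall that the analytic symbol $B$ is characterized in the bilinear sense by $[f,g]_\hel = \jap{fg, B}_{H^2(\bbT^\infty)}$ for all polynomials $f,g$, and since by \eqref{eq:f11} we have $[f,g]_\hel = [fg,1]_\hel$, it suffices to identify $B$ from the linear functional $h \mapsto [h,1] = \jap{h, B}_{H^2(\bbT^\infty)}$, taking $h = fg$.

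The plan is to treat a single summand $\bD(\bc)(h)(\lambda)$ with $\bc = (c_1,\dots,c_M)$, $\lambda \in \bbD^\infty \cap \ell^2$, $\bc \in \ell^2$, and then sum over $\ell$. First I would establish the fundamental reproducing identity: for a single direction $c \in \ell^2$ and a point $\lambda \in \bbD^\infty \cap \ell^2$, the directional derivative of a polynomial $h$ is reproduced against the kernel by
$$
D(c) h(\lambda) = \jap{h,\, D(c)^* K_\infty(\cdot,z)|_{z=\lambda}}_{H^2(\bbT^\infty)}.
$$
This follows by differentiating the basic reproducing property $h(\lambda) = \jap{h, K_\infty(\cdot,\lambda)}$ under the $z$-differentiation, since $K_\infty(\cdot,z)$ depends antiholomorphically on $z$; the operator $D(c)^* = \sum_j \overline{c(j)} \partial_{\overline{z_j}}$ is precisely the transfer of $D(c)$ through the conjugate-linear dependence of the kernel on its second argument. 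The iterated case $\bD(\bc) = D(c_1)\cdots D(c_M)$ then follows by applying this $M$ times, yielding $\bD(\bc)h(\lambda) = \jap{h, \bD(\bc)^* K_\infty(\cdot,z)|_{z=\lambda}}$. Summing over $\ell$ and specializing $h = fg$ gives the claimed formula for $B$.

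The main obstacle I expect is justifying the convergence and the validity of differentiating the infinite-product kernel $K_\infty(z,w) = \prod_j (1-z_j\overline{w}_j)^{-1}$ in the infinitely many variables, when differentiating in the direction $c \in \ell^2$ at a point $\lambda \in \bbD^\infty \cap \ell^2$. For a polynomial $h$, the left side $D(c)h(\lambda) = \sum_j c(j)\partial_{z_j}h(\lambda)$ is a \emph{finite} sum (only finitely many $\partial_{z_j} h$ are nonzero), so the issue is really to verify that the candidate symbol $D(c)^* K_\infty(\cdot,z)|_{z=\lambda} = \sum_j \overline{c(j)}\,\partial_{\overline{z_j}} K_\infty(\cdot,z)|_{z=\lambda}$ is a well-defined element of $H^2(\bbT^\infty)$ and that pairing it against $h$ reproduces the correct finite sum. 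Here the hypotheses $\lambda \in \ell^2$ and $c \in \ell^2$ are essential: they guarantee both that $\lambda$ lies in the domain $\bbD^\infty \cap \ell^2$ where $K_\infty$ is defined, and that the $H^2$-norm of the differentiated kernel is finite (the factor $\partial_{\overline{w}_j} K_\infty(z,w) = z_j (1-z_j\overline w_j)^{-1} K_\infty(z,w)$ contributes a factor whose $\ell^2$-summability of coefficients is controlled by $\sum_j |c(j)|^2 < \infty$).

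Concretely, I would verify the identity termwise on the monomial basis $\{z^\kappa\}$, which reduces the whole matter to the elementary computation that $D(c)^* \overline{w}^\kappa|_{w=\lambda}$ pairs correctly with $z^\kappa$, and then extend by linearity to all polynomials $h$; the $\ell^2$ hypotheses ensure the resulting series for $B$ converges in $H^2(\bbT^\infty)$. Since the form $[\cdot,\cdot]$ is already assumed bounded, the functional $h\mapsto[h,1]$ is continuous on $H^2(\bbT^\infty)$ and hence represented by a genuine element $B\in H^2(\bbT^\infty)$; the computation above simply identifies that element explicitly. Finally, recalling from the discussion following Theorem~\ref{thm:helsonkronecker} that $[\cdot,\cdot]$ is in fact continuous on $H^1(\bbT^\infty)$, the Hahn--Banach argument shows $B$ may moreover be taken to be a bounded symbol, consistent with $B \in L^\infty(\bbT^\infty)$; but for the statement of the corollary it suffices to produce the explicit $H^2$ representative displayed above.
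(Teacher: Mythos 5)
Your proposal is correct and takes essentially the same route as the paper: the paper's entire proof is the one-line observation that $\partial_z^\sigma f(z) = \partial_z^\sigma \jap{f, K_\infty(\cdot,z)}_{H^2(\bbT^\infty)} = \jap{f, \partial_{\overline z}^\sigma K_\infty(\cdot,z)}_{H^2(\bbT^\infty)}$ for finitely supported multi-indices $\sigma$, i.e.\ differentiating the reproducing identity under the pairing, which is exactly your fundamental identity. The additional care you take over the $H^2$-convergence of the series $\sum_j \overline{c(j)}\,\partial_{\overline z_j} K_\infty(\cdot,z)\big|_{z=\lambda}$ using $c,\lambda \in \ell^2$ is a detail the paper leaves implicit in passing from finitely supported multi-indices to $\ell^2$ directions, and your treatment of it (pairing termwise against polynomials, where only finitely many terms survive, and invoking boundedness of the form to get the Riesz representative) is sound.
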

\begin{proof}
This follows from the fact that if $f \in H^2(\mathbb{T}^\infty)$, 
$\lambda \in \bbD^\infty \cap \ell^2$, and $\sigma = (\sigma_1, \sigma_2, \ldots)$ is a finitely supported multi-index, then
\begin{equation*}
\partial_z^\sigma f(z) 
= 
\partial_z^\sigma\jap{f,K_\infty(\cdot,z)}_{H^2(\bbT^\infty)}
=
\jap{f,\partial_{\overline{z}}^\sigma K_\infty(\cdot,z)}_{H^2(\bbT^\infty)}. 
\qedhere
\end{equation*}
\end{proof}

Before embarking on the proof of Theorem~\ref{thm:helsonkronecker}, we also note that given the representation \eqref{eq:finiterankhankel2}, 
it is easy to recognize the  finite rank Helson forms corresponding to non-negative Helson matrices 
(we will call them non-negative finite rank Helson forms).  
They correspond to the case where all factorizable differential operators
$\bD(\bc^{(\ell)})$ are positive constants and all points $\lambda^{(\ell)}$ consist of real coordinates. 
See \cite[p. 242]{Pow82} for the corresponding result in the finite variable case. 

\begin{corollary}
A finite rank Helson form is non-negative if and only if 
there exist finitely many points $\lambda^{(\ell)} \in \bbR^\infty$ and 
finitely many positive numbers $c^{(\ell)}>0$ such that
$$
[f,g] = \sum_{\ell} c^{(\ell)} f(\lambda^{(\ell)}) g(\lambda^{(\ell)}).
$$
\end{corollary}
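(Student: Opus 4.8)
The plan is to prove the nontrivial (``only if'') direction by reducing, in the non-negative case, to finite-dimensional spectral theory, thereby running the proof of Theorem~\ref{thm:moment} \emph{without} its growth hypothesis. The ``if'' direction is immediate: if $[f,g]=\sum_\ell c^{(\ell)} f(\lambda^{(\ell)})g(\lambda^{(\ell)})$ with $c^{(\ell)}>0$ and $\lambda^{(\ell)}\in\bbR^\infty$, then for a finite sequence $\{f_n\}$ with $f(z)=\sum_n f_n z^{\kappa(n)}$ one computes $\sum_{n,m}\hel(nm)f_n\overline{f_m}=\sum_\ell c^{(\ell)}\Abs{f(\lambda^{(\ell)})}^2\geq 0$, using that each $(\lambda^{(\ell)})^{\kappa(m)}$ is real; hence $M(\hel)\geq0$, while finite rank is clear since $\ker[\cdot,\cdot]$ is cut out by finitely many point-evaluation conditions.

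For the converse, suppose $[\cdot,\cdot]_\hel$ has finite rank and $M(\hel)\geq0$. First I would record that non-negativity forces the matrix $\{\hel(nm)\}$ to be Hermitian, and since $\hel(nm)=\hel(mn)$ this means $\hel$ is real-valued. The decisive step is to identify the bilinear kernel $\ker[\cdot,\cdot]_\hel$ with the null space $N=\{f:\jap{f,f}=0\}$ of the non-negative Hermitian form $\jap{f,g}=\sum_{n,m}\hel(nm)f_n\overline{g_m}$. Indeed, by Cauchy--Schwarz $N=\{f:\jap{f,g}=0\ \forall g\}$, and since $\hel$ is real we have $\jap{f,g}=[f,\tilde g]$, where $\tilde g$ is obtained from $g$ by conjugating its coefficients; as $g\mapsto\tilde g$ is a bijection of $\bbC[z]$, this yields $N=\ker[\cdot,\cdot]_\hel$. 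Consequently the space $\calH$ built in the proof of Theorem~\ref{thm:moment}, the completion of $\mathscr{P}/N$, satisfies $\dim\calH=\codim\ker[\cdot,\cdot]_\hel<\infty$ and is therefore already complete.

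With $\calH$ finite-dimensional, the multiplication operators $A_j$ (given by $A_jf=z_jf$) descend to $\calH$, are symmetric, commute, and --- crucially --- are automatically self-adjoint, so the Devinatz essential self-adjointness argument (and with it the growth hypothesis $\hel(n)=O(n^a)$, which may well fail here) is not needed. Simultaneously diagonalizing the commuting family $\{A_j\}_{j\geq1}$ produces finitely many joint eigenvalues $\lambda^{(\ell)}=(\lambda^{(\ell)}_1,\lambda^{(\ell)}_2,\dots)\in\bbR^\infty$; since $1$ is a cyclic vector for the algebra generated by the $A_j$ (its monomial images $A^{\kappa(n)}1$ span $\calH$), the joint spectrum is simple and the spectral measure of $1$ equals $\sum_\ell c^{(\ell)}\delta_{\lambda^{(\ell)}}$ with $c^{(\ell)}=\norm{E(\{\lambda^{(\ell)}\})1}^2>0$. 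This gives $\hel(p^\kappa)=\jap{A^\kappa 1,1}=\sum_\ell c^{(\ell)}(\lambda^{(\ell)})^\kappa$, and substituting into the bilinear form yields $[f,g]=\sum_\ell c^{(\ell)}f(\lambda^{(\ell)})g(\lambda^{(\ell)})$, as required.

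The main obstacle is the bookkeeping in the decisive step: rank is defined through the bilinear form, whereas the Hilbert-space machinery is sesquilinear, so the bridge $\ker[\cdot,\cdot]_\hel=N$ must be made precise --- it is exactly here that reality of $\hel$ and non-negativity enter. A secondary point to get right is that a non-negative finite-rank form need not be bounded (e.g. $\lambda=(2,0,0,\dots)$ gives a rank-one non-negative form with $\hel(2^k)=2^k$), so one cannot route through the bounded case and instead must exploit finite-dimensionality of $\calH$ to replace the growth hypothesis of Theorem~\ref{thm:moment}. One could alternatively begin from the representation \eqref{eq:finiterankhankel2} of Theorem~\ref{thm:helsonkronecker} and impose positivity, but extracting that each $\bD(\bc^{(\ell)})$ must be a positive constant of order $0$ seems more delicate than the spectral route above.
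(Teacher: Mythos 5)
Your proof is correct, but it takes a genuinely different route from the paper. The paper obtains this corollary as a consequence of the representation \eqref{eq:finiterankhankel2} in Theorem~\ref{thm:helsonkronecker}(i): one imposes non-negativity on a finite sum of factorizable differential operators evaluated at points $\lambda^{(\ell)}\in\bbC^\infty$ and argues (following Power's finite-variable analysis, cited at \cite[p.~242]{Pow82}) that positivity forces every $\bD(\bc^{(\ell)})$ to be a positive constant of order $0$ and every $\lambda^{(\ell)}$ to have real coordinates. You instead bypass Theorem~\ref{thm:helsonkronecker} entirely and run a GNS-type argument adapted from the proof sketch of Theorem~\ref{thm:moment}: the bridge $\ker[\cdot,\cdot]_\hel=\{f:\jap{f,f}=0\}$ (which you correctly ground in the reality of $\hel$, itself forced by positivity, together with Cauchy--Schwarz and the coefficient-conjugation bijection) shows $\calH$ is finite-dimensional, whence the commuting symmetric multiplications $A_j$ are automatically self-adjoint, simultaneous diagonalization with the cyclic vector $1$ yields simple joint spectrum, and the weights $c^{(\ell)}=\norm{P_\ell 1}^2>0$ give the representation. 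The paper does sketch exactly this finite-dimensional reduction in the preliminaries of Section~\ref{sec:finiterank}, but only for \emph{bounded} non-negative $M(\hel)$, where the growth hypothesis $\hel(n)=O(n^a)$ of Theorem~\ref{thm:moment} holds automatically; your observation that finite-dimensionality replaces the Devinatz essential self-adjointness step, so that neither boundedness nor any growth condition is needed, is precisely what upgrades that sketch to the full corollary. What each approach buys: the paper's route leverages the already-established structure theorem and treats positivity as a constraint on its output; yours is self-contained, more elementary, and makes explicit that the corollary is independent of the (much harder) Theorem~\ref{thm:helsonkronecker}. One small inaccuracy: your example $\lambda=(2,0,0,\dots)$ gives $\hel(2^k)=2^k=n$, so this sequence \emph{does} satisfy $\hel(n)=O(n)$; it witnesses unboundedness of the operator but not failure of the growth hypothesis. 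For the latter you would need coordinates growing superpolynomially relative to the primes, e.g.\ $\lambda_j=2^{p_j}$, so that $\hel(p_j)=2^{p_j}$ is not $O(p_j^a)$ for any $a$ --- such examples do exist, so your underlying point that one cannot simply invoke Theorem~\ref{thm:moment} stands.
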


\section{Proof of Theorem~\ref{thm:helsonkronecker}}\label{sec:frproof}

The proofs of (i) and (ii) of the theorem are based on two distinct constructions, the ``if'' part being easy in both cases.
For (i), the ``only if'' part is based on Power's finite-variable result (Theorem~\ref{thm.power}) and 
on a reduction procedure which depends on elementary polynomial algebra. 
For (ii), the ``only if" part is based on elementary, but rather lengthy, arguments of tensor algebra. 

\subsection{Proof of Theorem~\ref{thm:helsonkronecker}(i): the ``if'' part}
It suffices to prove that the form corresponding to a single term in \eqref{eq:finiterankhankel2}
has finite rank:
$$
[f,g]=\bD({\bc})(fg)(\lambda),
$$
where $\bc=(c_1,\dots,c_M)$. Let 
$$
I_0=\{f\in\bbC[z]: f(\lambda)=0\},
$$
and for every $m=1,\dots,M$, let $I_m$ be the subspace of all polynomials  
$f\in\bbC[z]$ such that $\bD({\mathbf b})f(\lambda)=0$ for all $\mathbf b=(b_1,\dots,b_m)$
such that $\{b_1,\dots b_m\}$ is a subset of $\{c_1,\dots,c_M\}$. 
Now let 
$$
I=\cap_{m=0}^M I_m.
$$
Since every $I_m$ is defined by finitely many linear conditions, $I$ has a finite
codimension. On the other hand, it is easy to see by the product rule that 
$$
I\subset \ker[\cdot,\cdot].
$$
Thus, the Helson form $[\cdot,\cdot]$ has finite rank.
\qed

\subsection{Proof of Theorem~\ref{thm:helsonkronecker}(i): the ``only if'' part}
Below we deal with ideals in rings of polynomials. Any such ideal is necessarily also a linear subspace, 
hence the notions of dimension and codimension are 
unambiguously defined for them.

We need the following description of the ideals of finite codimension in $\bbC[z_1, \ldots, z_d]$, $1 \leq d < \infty$, see \cite[Lemma 3]{Pow82} and \cite[Section 14]{JPR}.
\begin{lemma} \label{lem:finitevarrep}
Let $I$ be a finite-codimensional ideal of $\bbC[z_1, \ldots, z_d]$, 
where $1 \leq d < \infty$. 
Let $\Lambda$ be any linear functional on $\bbC[z_1, \ldots, z_d]$ vanishing on $I$. 
Then $\Lambda$ can be represented as a finite sum 
$$
\Lambda(f)
=
\sum_\ell c^{(\ell)}\partial_z^{\sigma^{(\ell)}}f(z)|_{z=\lambda^{(\ell)}},
$$
where for each $\ell$, $c^{(\ell)}\in\bbC$, $\sigma^{(\ell)}\in\bbN_0^d$ and $\lambda^{(\ell)}\in \bbC^d$. 
\end{lemma}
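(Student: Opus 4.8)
The plan is to translate the statement into a description of the dual space of the quotient algebra. Writing $A=\bbC[z_1,\dots,z_d]/I$, the hypothesis that $\Lambda$ vanishes on $I$ means exactly that $\Lambda$ descends to a linear functional on $A$, so the problem is to show that $A^*$ is spanned by functionals of the form $f\mapsto \partial_z^\sigma f(\lambda)$. Since $I$ has finite codimension, $A$ is a finite-dimensional commutative $\bbC$-algebra, hence Artinian, and this is the structure I would exploit. The first concrete step is to locate the relevant points. Because $A$ is finite-dimensional, for each variable $z_i$ the powers $1,z_i,z_i^2,\dots$ are linearly dependent modulo $I$, so $I$ contains a nonzero univariate polynomial $q_i(z_i)$. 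Consequently the zero set $V(I)$ is contained in the finite product of the root sets of the $q_i$ and is a finite set $\{\lambda^{(1)},\dots,\lambda^{(N)}\}\subset\bbC^d$. As $\bbC$ is algebraically closed, the Nullstellensatz identifies the maximal ideals containing $I$ with these points, $\mathfrak{m}_j=(z_1-\lambda^{(j)}_1,\dots,z_d-\lambda^{(j)}_d)$, and gives $\sqrt{I}=\bigcap_{j=1}^N\mathfrak{m}_j$.

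The technical heart is to upgrade this to a containment of an intersection of \emph{powers} of the $\mathfrak{m}_j$ inside $I$. Since $\bbC[z_1,\dots,z_d]$ is Noetherian, $\sqrt I$ is finitely generated and some power satisfies $(\sqrt I)^{k}\subseteq I$. The ideals $\mathfrak{m}_1,\dots,\mathfrak{m}_N$ are pairwise comaximal (distinct maximal ideals), hence so are their $k$-th powers, and therefore $(\sqrt I)^{k}=\bigl(\bigcap_j\mathfrak{m}_j\bigr)^{k}=\prod_j\mathfrak{m}_j^{k}=\bigcap_j\mathfrak{m}_j^{k}$. Thus $\bigcap_j\mathfrak{m}_j^{k}\subseteq I$, so $\Lambda$ vanishes on $\bigcap_j\mathfrak{m}_j^{k}$ and factors through $\bbC[z]/\bigcap_j\mathfrak{m}_j^{k}$. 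By the Chinese Remainder Theorem this quotient splits as $\prod_{j=1}^N \bbC[z]/\mathfrak{m}_j^{k}$, which decomposes $\Lambda$ as a finite sum $\Lambda=\sum_{j}\Lambda_j$, where each $\Lambda_j$ is a functional on the local factor $\bbC[z]/\mathfrak{m}_j^{k}$.

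It remains to identify the dual of each local factor, which I would do by a Taylor expansion. After translating $\lambda^{(j)}$ to the origin, $\mathfrak{m}_j^{k}$ is precisely the set of polynomials all of whose terms have degree $\geq k$; equivalently, $f\in\mathfrak{m}_j^{k}$ if and only if $\partial_z^\sigma f(\lambda^{(j)})=0$ for every multi-index $\sigma$ with $|\sigma|<k$. Hence the derivative evaluations $f\mapsto\partial_z^\sigma f(\lambda^{(j)})$, $|\sigma|<k$, form a basis of the dual of $\bbC[z]/\mathfrak{m}_j^{k}$, and each $\Lambda_j$ is a finite linear combination of them. Summing over $j$ produces the required representation with $c^{(\ell)}\in\bbC$, $\sigma^{(\ell)}\in\bbN_0^d$, and $\lambda^{(\ell)}\in\bbC^d$.

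I expect the main obstacle to be the second step, namely turning the abstract finite-codimensionality of $I$ into the explicit containment $\bigcap_j\mathfrak{m}_j^{k}\subseteq I$. The remaining ingredients are either bookkeeping (the splitting of $A$ into local pieces) or the routine Taylor computation identifying each local dual, whereas it is here that the genuine ring-theoretic input enters: the Nullstellensatz identification of $\sqrt I$ as an intersection of point ideals, Noetherianity to bound the power of the radical, and comaximality to pass between products and intersections of powers.
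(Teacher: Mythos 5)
Your proof is correct, and there is an important point of comparison to make: the paper does not prove this lemma at all --- it is quoted directly from \cite[Lemma 3]{Pow82} and \cite[Section 14]{JPR} --- so your argument supplies a complete, self-contained proof of a result the authors take from the literature. Each step checks out. Finite codimension forces a nonzero univariate polynomial $q_i(z_i)\in I$ for each $i$, so $V(I)$ is finite; the Nullstellensatz gives $\sqrt{I}=\bigcap_{j}\mathfrak{m}_j$; Noetherianity yields $(\sqrt{I})^{k}\subseteq I$; pairwise comaximality of the $\mathfrak{m}_j^{k}$ correctly converts $\bigl(\bigcap_j\mathfrak{m}_j\bigr)^{k}$ into $\bigcap_j\mathfrak{m}_j^{k}$ and justifies the Chinese Remainder splitting $\Lambda=\sum_j\Lambda_j$; and the identification of the local duals is right, since $f\in\mathfrak{m}_j^{k}$ if and only if $\partial_z^{\sigma}f(\lambda^{(j)})=0$ for all $\abs{\sigma}<k$, and the functionals $f\mapsto\partial_z^{\sigma}f(\lambda^{(j)})$, $\abs{\sigma}<k$, are dual (up to the factors $\sigma!$) to the translated monomials $(z-\lambda^{(j)})^{\sigma}$, hence form a basis of the dual of $\bbC[z_1,\dots,z_d]/\mathfrak{m}_j^{k}$ by dimension count. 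The only degenerate case left implicit is $I=\bbC[z_1,\dots,z_d]$, where $\Lambda=0$ and the empty sum suffices; worth one sentence. Relative to the primary-decomposition (Lasker--Noether) route typically invoked for this statement, your radical-power-plus-comaximality argument is marginally more elementary --- it needs only the Nullstellensatz, finite generation of $\sqrt{I}$, and CRT --- at the cost of a crude uniform exponent $k$ rather than exponents adapted to each point, which is immaterial since the lemma asks for no bound on the orders $\sigma^{(\ell)}$.
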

In the case of ideals $I$ in $\bbC[z]$, $z=(z_1, z_2, \ldots)$, 
we need a more precise statement, allowing for derivatives in directions which are not 
finitely supported. 
The construction in this subsection, 
which gives a characterization of finite-codimensional ideals in $\bbC[z]$,  
was suggested to the authors by Steffen Oppermann.

First we  need a lemma, which allows us to transfer the problem from $\bbC[z]$ 
to $\bbC[z_1,\cdots,z_d]$, for some $d < \infty$.
For $d\in\bbN$, we denote by 
$$
i_d:\bbC[z_1,\dots,z_d]\to \bbC[z], \quad z=(z_1,z_2,\dots),
$$
the natural embedding. 
\begin{lemma}\label{lma.alg}
Let $I\subset \bbC[z]$ be an ideal of finite codimension. 
Then there exists $d\in\bbN$ and a ring homomorphism
$$
\omega:\bbC[z]\to\bbC[z_1,\dots,z_d]
$$
such that $\omega$ is onto, and 
\begin{enumerate}[label={\rm ({\roman*})}]
\item
$\ker \omega\subset I$;
\item
$\omega\circ i_d(g)=g$, $\forall g\in\bbC[z_1,\dots,z_d]$;
\item
$i_d\circ\omega(f)-f\in I$, $\forall f\in\bbC[z]$;
\item
the ideal $\omega(I)$ has finite codimension in $\bbC[z_1,\cdots,z_d]$.
\end{enumerate}
\end{lemma}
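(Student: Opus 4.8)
The plan is to exploit the finite codimension of $I$ to compress the problem to finitely many variables, and then to realize $\omega$ as an algebra section. First I would pass to the quotient algebra $A=\bbC[z]/I$, with quotient map $\pi\colon\bbC[z]\to A$. Since $I$ has finite codimension, $A$ is a finite-dimensional commutative $\bbC$-algebra, generated as an algebra by the images $\bar z_j=\pi(z_j)$. The key reduction is to produce a finite $d$ for which $\bar z_1,\dots,\bar z_d$ already generate $A$. To get this, I would look at the nested subalgebras $A_d=\bbC[\bar z_1,\dots,\bar z_d]$: these form an increasing chain of subspaces of the finite-dimensional space $A$ whose union is $A$, so the chain must stabilize, forcing $A_d=A$ for some $d<\infty$. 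For this $d$, the composite $\pi\circ i_d\colon\bbC[z_1,\dots,z_d]\to A$ is surjective.

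With $d$ fixed, I would define $\omega$ as the (unique) $\bbC$-algebra homomorphism prescribed on generators by $\omega(z_j)=z_j$ for $j\le d$, and, for each $j>d$, $\omega(z_j)=q_j$ where $q_j\in\bbC[z_1,\dots,z_d]$ is chosen --- using surjectivity of $\pi\circ i_d$ --- so that $i_d(q_j)-z_j\in I$. This is well-defined since each polynomial involves only finitely many variables, and it is automatically onto because $\omega\circ i_d=\mathrm{id}$, which is exactly property (ii). For property (iii) I would note that $\pi\circ i_d\circ\omega$ and $\pi$ are two ring homomorphisms $\bbC[z]\to A$ that agree on every generator $z_j$ (by construction for $j\le d$, and by the choice of $q_j$ for $j>d$), hence are equal; this says $i_d\omega(f)-f\in\ker\pi=I$ for all $f$. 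Property (i) then drops out: if $\omega(f)=0$ then $f=f-i_d\omega(f)\in I$.

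For property (iv) I would identify $\omega(I)$ with $i_d^{-1}(I)=I\cap\bbC[z_1,\dots,z_d]$. The inclusion $\omega(I)\subseteq i_d^{-1}(I)$ follows from (iii), since $f\in I$ gives $\pi(i_d\omega(f))=\pi(f)=0$, so $i_d\omega(f)\in I$; the reverse inclusion holds because any $g\in i_d^{-1}(I)$ satisfies $g=\omega(i_d(g))\in\omega(I)$ with $i_d(g)\in I$. Since $\bbC[z_1,\dots,z_d]/i_d^{-1}(I)$ is isomorphic to the image of $\pi\circ i_d$, namely all of $A$, the ideal $\omega(I)$ has codimension $\dim A<\infty$. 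I expect the only real content to be the stabilization argument yielding a finite generating set; the construction of $\omega$ and the checks (i)--(iv) are then formal, the single point needing care being that $\omega$ must be built as a homomorphism via images of generators --- not merely as a linear map --- so that the crucial identity $\pi\circ i_d\circ\omega=\pi$ can be verified generator by generator.
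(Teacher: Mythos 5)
Your proof is correct and follows essentially the same route as the paper: the stabilization of the increasing chain of subalgebras $\bbC[\bar z_1,\dots,\bar z_d]$ inside the finite-dimensional quotient $\bbC[z]/I$ to find $d$, then defining $\omega$ on generators so that $\pi\circ i_d\circ\omega=\pi$, from which (i)--(iii) follow formally. The only cosmetic differences are bookkeeping: the paper deduces (iii) from (i) and (ii) rather than the reverse, and proves (iv) via the induced surjection $\bbC[z]/I \to \bbC[z_1,\dots,z_d]/\omega(I)$, whereas you identify $\omega(I)=i_d^{-1}(I)$ and invoke the first isomorphism theorem --- both valid.
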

\begin{proof}
Let 
$$
\gamma:\bbC[z]\to \bbC[z]/I
$$
be the natural ring-epimorphism.  
We claim that there exists $d \in \bbN$ such that the homomorphism 
$$
\gamma \circ i_d : \bbC[z_1, \ldots, z_d] \to \bbC[z]/I
$$
is onto. To see this, note that 
$$
\gamma \circ i_1( \bbC[z_1]) \subset \gamma \circ i_2( \bbC[z_1, z_2])  \subset \cdots
$$
is an increasing chain of linear subspaces of $\bbC[z]/I$, and
$$
\cup_{d=1}^\infty \gamma \circ i_d( \bbC[z_1, \ldots, z_d] ) = \bbC[z]/I.
$$
Since, by assumption, $\bbC[z]/I$ is finite-dimensional as a vector space, the claim follows.

Now let us define $\omega : \bbC[z] \to \bbC[z_1, \ldots, z_d]$ as follows. 
For each $j \in \bbN$, let $p_j$ be the monomial $p_j(z)=z_j$. 
For $1 \leq j \leq d$, 
set $\omega(p_j)=p_j$, and for $j>d$ let 
$\omega(p_j)$ be any polynomial in $\bbC[z_1,\dots,z_d]$ such that
\begin{equation}
\gamma\circ i_d\circ \omega(p_j)=\gamma(p_j).
\label{eq:f10}
\end{equation}
Now extend $\omega$ to all of $\bbC[z]$ as a ring homomorphism.
The relation \eqref{eq:f10} extends to 
$$
\gamma\circ i_d\circ \omega(f)=\gamma(f), \quad f\in \bbC[z].
$$
From here we obtain property (i) of $\omega$, since $\ker \gamma=I$.   
Property (ii) holds by construction.
In particular, $\omega$ is onto. 
Property (iii) follows immediately from (i) and (ii). 

To verify property (iv), observe that since $\omega$ is onto, we get that $\omega(I)$ is an ideal, and hence the quotient map
$$
\widetilde\omega : \bbC[z]/I \to \bbC[z_1,\cdots, z_d]/\omega(I)
$$
is well-defined and onto. Since the left side here is of finite dimension, so is the right side. 
\end{proof}

\begin{proof}[Proof of Theorem~\ref{thm:helsonkronecker}(i)]
Suppose that a Helson form $[\cdot,\cdot]$ has finite rank.
Let $I=\ker[\cdot,\cdot]$ and let $\omega : \bbC[z] \to \bbC[z_1, \ldots, z_d]$ be as in Lemma~\ref{lma.alg}. 
For $1 \leq j < \infty$, let  $p_j$ be the monomial $p_j(z)=z_j$ and set $P_j = \omega(p_j)$. 
Now let $P: \bbC^d\to\bbC^\infty$ be the map  
$$
P(z_1, \ldots, z_d) = (z_1, \ldots, z_d, P_{d+1}(z_1, \ldots, z_d), \ldots, P_n(z_1, \ldots, z_d), \ldots).
$$
Then the map $\omega$ can be written as 
$$
\omega(f) = f\circ P.
$$
To see this, it suffices to check that the formula holds on monomials $p_j(z)=z_j$, and to note that both sides define ring-homomorphisms.

Now let $\Lambda(f)=[f,1]$. 
By Lemma~\ref{lma.alg}(iii), we have 
$$
\Lambda(f)=\Lambda\circ i_d\circ\omega(f), \quad f\in \bbC[z]. 
$$
By Lemma~\ref{lma.alg}(iv), the ideal $\omega(I)$ in $\bbC[z_1,\dots,z_d]$ has finite 
codimension. Clearly, $\Lambda\circ i_d$ vanishes on this ideal. 
Thus, by Lemma~\ref{lem:finitevarrep}, the linear functional $\Lambda\circ i_d$ on $\bbC[z_1,\dots,z_d]$
has the form 
$$
\Lambda\circ i_d (f)
=
\sum_\ell c^{(\ell)}\partial_{z}^{\sigma^{(\ell)}} f(z)|_{z=\lambda^{(\ell)}},
$$
where the sum is finite. 
To understand the structure of this expression, let us assume first for simplicity, 
that the sum reduces to one term of the first order:
$$
\Lambda\circ i_d(f)=\partial_{z_j}f(z)|_{z=\lambda}.
$$
Then, by the chain rule, 
$$
\partial_{z_j}(f\circ P)=\sum_{k=1}^\infty (\partial_{z_j}P_k)(\partial_{z_k}f)\circ P,
$$
and so 
$$
\Lambda(f)=D({c}) f(z)|_{z=P(\lambda)}, \quad c(k)=\partial_{z_j}P_k(\lambda). 
$$
For higher order terms, we get 
similar but more complicated formulas. However, 
it is clear by the chain and product rules 
that $\Lambda$ can be written on the form \eqref{eq:finiterankhankel2}. 
\end{proof}

\subsection{Proof of Theorem~\ref{thm:helsonkronecker}(ii), the ``if'' part}\label{sec:7.3}
It suffices to consider a single term in  \eqref{eq:finiterankhankel2}, 
$(f,g) \mapsto \bD({\bc})(fg)(\lambda)$, where 
$\bc\in \ell^2$ and $\lambda \in \bbD^\infty \cap \ell^2$.
With $h = fg$, 
note that
$$
\bD({\bc})h(\lambda) 
= 
\partial_{z_1} \cdots \partial_{z_M} h(\lambda + z_1c_1  + \cdots + z_Mc_M)\big|_{(z_1, \ldots, z_M) = 0}.
$$
Any $h \in H^1(\mathbb{T}^\infty)$ is analytic on the subset $\bbD^\infty \cap \ell^2$ 
of the Banach space $\ell^2$ \cite{CG86}. This means precisely that
$$
(z_1, \ldots, z_M) \mapsto h(\lambda + z_1c_1  + \cdots + z_Mc_M)
$$
is analytic in the variables $(z_1, \ldots, z_M)$, in a neighborhood of $0$. 
A routine argument with Cauchy's formula hence shows that
$$
|\bD({\bc})h(\lambda)| \leq C \|h\|_{H^1(\mathbb{T}^\infty)}\leq C\norm{f}_{H^2(\bbT^\infty)}\norm{g}_{H^2(\bbT^\infty)},
$$
demonstrating that $M(\hel)$ is bounded. This argument also proves 
(by the Hahn-Banach theorem) that $M(\hel)$ has a bounded symbol. 
\qed

\subsection{Proof of Theorem~\ref{thm:helsonkronecker}(ii), the ``only if'' part: Step 1} 

The remainder of Section~\ref{sec:frproof} is dedicated to the proof of the ``only if" part of Theorem~\ref{thm:helsonkronecker}(ii). 
The proof is mostly elementary but it contains many steps.
In this subsection, we consider the case when there is only one point $\lambda^{(\ell)}$ in the sum 
\eqref{eq:finiterankhankel2} and all differential operators in this sum are of the same order $m\in\bbN$.

For $m\in\bbN_0$, let $H^2_m(\bbT^\infty)$ be the subspace of $H^2(\bbT^\infty)$ spanned by all homogeneous 
polynomials of degree $m$. 
The main result of this subsection is the following lemma.
\begin{lemma}\label{lma.f15}
Let $m \in \bbN$, $\lambda\in \bbC^\infty$, and let $\Lambda$ be the linear functional on $\bbC[z]$ defined by
\begin{equation}
\Lambda(f)=\sum_k \bD({\bc^{(k)}})f|_{z=\lambda},
\label{eq:f8}
\end{equation}
where the sum is finite and $\ord(\bD({\bc^{(k)}}))=m$ for all $k$. 
Suppose that $\Lambda$ is bounded on $H^2_m(\bbT^\infty)$.  
Then 
$\Lambda$ has a (possibly different) representation of the form \eqref{eq:f8} such that $\bc^{(k)}\in\ell^2$ for every $k$.
\end{lemma}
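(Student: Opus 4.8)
The plan is to strip the statement down to an elementary fact about a single homogeneous polynomial on a finite-dimensional space, the point being that boundedness on $H^2_m(\bbT^\infty)$ only constrains the order-$m$ \emph{symbol} of $\Lambda$, i.e. its symmetric tensor. First I would record that, since each $\bD(\bc^{(k)})$ has order exactly $m$ and acts on homogeneous polynomials of degree $m$, the value $\bD(\bc^{(k)})f$ is a constant; the evaluation point $\lambda$ is therefore irrelevant at this degree. Testing on the rank-one powers $\ell_a^m$, where $\ell_a(z)=\sum_j a(j)z_j$ with $a$ finitely supported, the commutativity of the $D(c)$ gives $\Lambda(\ell_a^m)=m!\,T(a)$, where $T(a):=\sum_k\prod_{s=1}^m \jap{c^{(k)}_s,a}$ and $\jap{c,a}=\sum_j c(j)a(j)$ is the bilinear pairing. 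Using the elementary two-sided estimate $\|a\|_{\ell^2}^m\le \|\ell_a^m\|_{H^2(\bbT^\infty)}\le \sqrt{m!}\,\|a\|_{\ell^2}^m$, boundedness of $\Lambda$ yields the single consequence I will exploit: $|T(a)|\le C\|a\|_{\ell^2}^m$ for every finitely supported $a$.

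Next I would package the directions. Let $V=\Span\{c^{(k)}_s\}\subset\bbC^\infty$ (finite-dimensional), set $V_0=V\cap\ell^2$, and fix an algebraic complement $V_1$, so that $V_1\cap\ell^2=\{0\}$. Choosing a basis $v_1,\dots,v_p$ of $V_0$ and extending by $v_{p+1},\dots,v_N$ spanning $V_1$, the form factors as $T(a)=P(L(a))$, where $L(a)=(\jap{v_i,a})_{i=1}^N$ is onto $\bbC^N$ (by linear independence of the $v_i$) and $P$ is homogeneous of degree $m$. The estimate becomes $|P(y)|\le C\,\nu(y)^m$ for all $y\in\bbC^N$, where $\nu(y)=\inf\{\|a\|_{\ell^2}:L(a)=y,\ a\text{ finitely supported}\}$ is the quotient seminorm.

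The heart of the matter, and the step I expect to be the main obstacle, is to show that $\nu$ is blind to the $V_1$-coordinates: $\nu(y',y'')=\nu(y',0)$, equivalently $\nu(0,y'')=0$ for all $y''\in\bbC^{N-p}$. Concretely, the system $\jap{v_i,a}=0\ (i\le p)$, $\jap{v_i,a}=y''_i\ (i>p)$ must admit solutions of arbitrarily small $\ell^2$-norm. I would prove this by a windowing-plus-compactness argument: since every nonzero $w\in V_1$ fails to lie in $\ell^2$, one has $\sum_{j\in W}|w(j)|^2\to\infty$ along suitable finite windows $W$; by compactness of the unit sphere of $V_1$ together with continuity, one can select a single far window $W$ on which the family $\{v_i|_W\}_{i>p}$ has arbitrarily large smallest singular value, while the tails $v_i|_W\ (i\le p)$ of the $\ell^2$-vectors are arbitrarily small. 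Solving the system with support in such a window then produces solutions of vanishing norm. (This is, in dual language, the statement that $\nu^*(\mu)=\|\sum_i\mu_iv_i\|_{\ell^2}$ is finite only when $\sum_i\mu_iv_i\in V_0$; but I would present the self-contained windowing version.)

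Granting $\nu(y)=\nu(y',0)$, the estimate forces, for each fixed $y'$, the polynomial $y''\mapsto P(y',y'')$ to be bounded on all of $\bbC^{N-p}$ by the constant $C\nu(y',0)^m$, hence constant; so $P(y',y'')=P(y',0)=:P_0(y')$ depends only on the $V_0$-coordinates. Thus $T(a)=P_0\big((\jap{v_i,a})_{i\le p}\big)$ is a degree-$m$ form in the functionals $\jap{v_i,\cdot}$ with $v_i\in V_0\subset\ell^2$. Expanding $P_0$ into monomials and reading each monomial $\prod_{i}\jap{v_i,a}^{\gamma_i}$ ($|\gamma|=m$) as the symbol of the factorizable operator obtained by listing each $v_i$ with multiplicity $\gamma_i$, I obtain a factorizable representation of the symmetric tensor $T$ with all directions in $V_0\subset\ell^2$. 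Finally I would note that a factorizable order-$m$ operator is determined by its symmetric tensor, and the diagonal form $a\mapsto T(a)$ determines that tensor by polarization; hence the equality of forms upgrades to an equality of differential operators on all of $\bbC[z]$. Evaluating at $\lambda$ then reproduces the original functional $\Lambda$, now written as in \eqref{eq:f8} with every $\bc^{(k)}\in\ell^2$, completing the proof. The tensor bookkeeping and the passage from $P_0$ back to operators are routine; the only genuinely nontrivial ingredient is the vanishing of the quotient seminorm on the non-$\ell^2$ directions.
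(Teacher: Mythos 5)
Your overall route is correct but genuinely different from the paper's. The paper encodes $\Lambda$ as the symmetric tensor $\bc=\sum_k c_1^{(k)}\odot\cdots\odot c_m^{(k)}$, shows via the orthonormal basis $\{z^\kappa\}_{|\kappa|=m}$ and multinomial bookkeeping that boundedness on $H^2_m(\bbT^\infty)$ is \emph{equivalent} to $\bc\in\ell^2(\bbN^m)$, and then proves a purely algebraic decomposition theorem (Lemmas~\ref{lma.f1}--\ref{lma.f3}): every element of $\ell^2(\bbN^m)\cap\ell(\bbN)^{\odot m}$ is a finite sum of symmetric products of $\ell^2(\bbN)$ vectors, established by a slot-by-slot induction that eliminates linear dependencies among partial tensor products and uses invertible evaluation matrices. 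You instead test only on the diagonal powers $\ell_a^m$ (your two-sided norm estimate and the identity $\Lambda(\ell_a^m)=m!\,T(a)$ are both correct), factor $T$ through the finite-dimensional value space of the functionals $\jap{v_i,\cdot}$, split $V=V_0\oplus V_1$ with $V_0=V\cap\ell^2$, kill the $V_1$-dependence of the polynomial $P$ by Liouville, and recover the operator identity from the diagonal one by polarization --- which is legitimate, since an order-$m$ factorizable operator acts on every monomial of every degree through its symmetric tensor alone. Your approach avoids the paper's tensor-slot induction entirely and concentrates all the analysis in one statement about the quotient seminorm $\nu$; the paper's approach, in exchange, yields the sharper quantitative fact (boundedness on $H^2_m$ iff $\bc\in\ell^2(\bbN^m)$) and a reusable decomposition lemma.

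There is, however, a concrete gap in the step you single out as the heart of the matter, in the version you say you would present. The two properties you impose on the window $W$ --- that $\norm{v_i|_W}$ is small for $i\le p$ and that $\{v_i|_W\}_{i>p}$ has large smallest singular value --- do \emph{not} guarantee that the system $\jap{v_i|_W,a}=0$ $(i\le p)$, $\jap{v_i|_W,a}=y''_i$ $(i>p)$ is solvable with support in $W$ at all, let alone with small norm. Smallness of $\norm{v_i|_W}$ gives no control over the \emph{direction} of $v_i|_W$: projecting the large vectors onto the orthogonal complement of the span of the small ones can annihilate them. For instance, with $p=1$, $N=2$, nothing in your conditions prevents the chosen window from satisfying $v_2|_W=t\,v_1|_W$ with $\norm{v_1|_W}=\delta$ and $t=M/\delta$; both conditions hold, yet $\jap{v_1|_W,a}=0$ forces $\jap{v_2|_W,a}=0\neq y''$, so the system is infeasible, and one can easily construct $v_1\in\ell^2$, $v_2\notin\ell^2$ admitting windows of exactly this kind. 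What you actually need is a lower bound on the $\{v_i|_W\}_{i>p}$ \emph{after} projecting off $\Span\{v_i|_W\}_{i\le p}$, and that does not follow from compactness of the unit sphere of $V_1$ plus divergence of the restricted norms. Fortunately, the dual argument you relegate to a parenthesis is complete and short, and repairs the proof within your own framework: $\nu$ is everywhere finite (by surjectivity of $L$ on finitely supported sequences), hence a continuous seminorm on $\bbC^N$ with kernel a subspace $K$; the computation $\nu^*(\mu)=\norm{\sum_i\mu_i v_i}_{\ell^2}$ shows $\nu^*(\mu)<\infty$ exactly when $\mu''=0$; and since in finite dimensions $\{\mu:\nu^*(\mu)<\infty\}$ is precisely the annihilator $K^{\perp}$ (a functional vanishing on $K$ descends to the finite-dimensional quotient, where it is automatically bounded), reflexivity of annihilators gives $K=\{0\}\times\bbC^{N-p}$, i.e. $\nu(y',y'')=\nu(y',0)$. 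Present that version in place of the windowing one and the proof goes through.
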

Note that if $f \in H^2_m(\bbT^\infty)$ and $\ord(\bD({\bc^{(k)}}))=m$, then $f \mapsto \bD({\bc^{(k)}})f|_{z=\lambda}$ is independent of the point $\lambda$. We may therefore consider $f \mapsto \bD({\bc^{(k)}})f$ to be a functional on $H^2_m(\bbT^\infty)$. In the proof of Lemma~\ref{lma.f15} we shall relate this functional to the symmetric tensor generated by $\bc^{(k)}$.

For $m \geq 1$, let $\ell(\bbN^m)$ be the linear space of all sequences $\{c(j)\}_{j\in\bbN^m}$ of complex numbers
(without any metric structure). 
For $c_1,\dots,c_m\in \ell(\bbN)$, the tensor product $c_1\otimes\cdots\otimes c_m$
is defined in the standard way as the element of $\ell(\bbN^m)$ such that 
$$
(c_1\otimes\cdots\otimes c_m)(j)=c_1(j_1)\cdots c_m(j_m), \quad 
j=(j_1,\cdots,j_m).
$$
We denote by $\ell(\bbN)^{\otimes m}\subset \ell(\bbN^m)$ the linear subspace which consists of
all finite linear combinations of such tensor products.

First we need a lemma which is in itself trivial, but involves some unwieldy notation. Therefore, we state it separately:

\begin{lemma}\label{lma.f1}
Let $c^{(1)},\dots,c^{(L)}$ be linearly independent elements of $\ell(\bbN^m)$. 
Then there exist multi-indices $j^{(1)},\dots,j^{(L)}\in\bbN^m$ such that the $L\times L$ matrix
$$
\{c^{(k)}(j^{(\ell)})\}_{\ell,k=1}^L
$$
is invertible. 
\end{lemma}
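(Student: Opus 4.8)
The plan is to argue by induction on $L$, reducing everything to the elementary fact that a nonzero element of $\ell(\bbN^m)$ cannot vanish at every multi-index. I would regard each $c^{(k)}$ simply as a function $\bbN^m \to \bbC$, so that linear independence of $c^{(1)}, \ldots, c^{(L)}$ means precisely that no nontrivial linear combination $\sum_k a_k c^{(k)}$ is the identically-zero function. The base case $L=1$ is immediate: linear independence forces $c^{(1)} \neq 0$, so there is a multi-index $j^{(1)}$ with $c^{(1)}(j^{(1)}) \neq 0$, and the $1 \times 1$ matrix is invertible.

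For the inductive step I assume the claim for $L-1$. Since $c^{(1)}, \ldots, c^{(L-1)}$ are linearly independent, I first fix multi-indices $j^{(1)}, \ldots, j^{(L-1)}$ for which the $(L-1)\times(L-1)$ matrix $\{c^{(k)}(j^{(\ell)})\}_{\ell,k=1}^{L-1}$ is invertible. I then regard the determinant of the full $L \times L$ matrix as a function of the one remaining index $j^{(L)} =: j$, and expand it along its last row to obtain
$$
\det\{c^{(k)}(j^{(\ell)})\}_{\ell,k=1}^{L} = \sum_{k=1}^L A_k\, c^{(k)}(j),
$$
where the cofactors $A_1, \ldots, A_L$ depend only on the already-chosen rows $j^{(1)}, \ldots, j^{(L-1)}$. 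Crucially, $A_L$ equals the nonzero $(L-1)\times(L-1)$ determinant furnished by the inductive hypothesis. Thus the right-hand side is the evaluation at $j$ of the function $\sum_{k=1}^L A_k c^{(k)} \in \ell(\bbN^m)$, which has nonzero coefficient $A_L$; by linear independence this function is not identically zero, so some choice $j = j^{(L)}$ makes the determinant nonvanishing, completing the induction.

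There is essentially no genuine obstacle here, this being a statement of pure linear algebra in which the index set $\bbN^m$ plays no role beyond being a set. The only point demanding care is bookkeeping: keeping straight the roles of the row-indices $j^{(\ell)}$ and the column-functions $c^{(k)}$, and verifying that the top cofactor $A_L$ is exactly the inductively obtained invertible minor, so that the determinant, viewed as a linear combination of the $c^{(k)}$, is guaranteed to have a nonzero coefficient and hence cannot vanish identically.
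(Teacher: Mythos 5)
Your proof is correct, and it takes a different route from the paper's. The paper disposes of this lemma in two lines: it identifies $\bbN^m$ with $\bbN$, views $c^{(1)},\dots,c^{(L)}$ as the rows of an $L\times\infty$ matrix of rank $L$, and invokes the standard fact that row rank equals column rank, which hands over $L$ columns (multi-indices) forming an invertible $L\times L$ submatrix. You instead prove the selection statement from scratch by induction on $L$, using the cofactor expansion of the determinant along the last row: having fixed $j^{(1)},\dots,j^{(L-1)}$ with invertible minor, you observe that $j\mapsto\det\{c^{(k)}(j^{(\ell)})\}$ is the evaluation of the linear combination $\sum_{k=1}^L A_k c^{(k)}$, whose coefficient $A_L$ is the nonzero inductive minor, so linear independence forces a nonvanishing value at some $j^{(L)}$. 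Your key observation that $A_L$ is exactly the $(L-1)\times(L-1)$ determinant (with sign $(-1)^{2L}=+1$) is right, and the rest is sound. In effect your induction reproves the relevant direction of the rank theorem rather than citing it: what this buys is a self-contained and greedy construction, selecting the multi-indices one at a time, whereas the paper's version is shorter but leans on a quoted fact about matrices with infinitely many columns (harmless, since the column space sits inside $\bbC^L$, but still a fact one must trust in that generality). Both arguments correctly treat $\bbN^m$ as nothing more than an index set, as you note.
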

\begin{proof}
Let us identify $\bbN^m$ with $\bbN$; then the elements $c^{(1)},\dots,c^{(L)}$ are mapped
into sequences of complex numbers labelled by an index in $\bbN$. 
Now the statement of the lemma is immediately recognizable as the elementary theorem saying 
that the row rank of a matrix equals its column rank. 
\end{proof}

\begin{lemma}\label{lma.f2}
Let $c\in \ell^2(\bbN^m)\cap \ell(\bbN)^{\otimes m}$. 
Then $c$ can be represented as a finite linear sum of the form
\begin{equation}
c=\sum_{\ell} c_1^{(\ell)}\otimes\cdots\otimes c_m^{(\ell)},
\label{eq:f1}
\end{equation}
where $c_1^{(\ell)},\dots,c_m^{(\ell)}\in \ell^2(\bbN)$ for all $\ell$. 
\end{lemma}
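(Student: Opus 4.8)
The plan is to argue by induction on $m$, the base case $m=1$ being immediate since $\ell^2(\bbN^1)\cap\ell(\bbN)^{\otimes 1}=\ell^2(\bbN)$, so that $c$ already has the desired form as a single factor. For the inductive step (and assuming $c\neq0$, the zero case being the empty sum) I would regard $c$ as an element of the algebraic tensor product $\ell(\bbN)\otimes W$, where $W=\ell(\bbN)^{\otimes(m-1)}$, by grouping the first slot against the remaining $m-1$ slots. I would then fix a representation $c=\sum_{\ell=1}^L a^{(\ell)}\otimes b^{(\ell)}$ with $a^{(\ell)}\in\ell(\bbN)$ and $b^{(\ell)}\in W$ of the shortest possible length $L$. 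By the standard fact that a minimal representation of a tensor has linearly independent factors on both sides, both $\{a^{(\ell)}\}_{\ell=1}^L\subset\ell(\bbN)$ and $\{b^{(\ell)}\}_{\ell=1}^L\subset\ell(\bbN^{m-1})$ are linearly independent. Note that after this reduction the $b^{(\ell)}$ are no longer pure tensors, which is precisely why the induction hypothesis will be needed at the end.

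The core of the argument is to promote each factor into $\ell^2$ by slicing $c$ and solving a linear system. First I would record the elementary observation that every ``slice'' of $c$ is square-summable: for each fixed $i\in\bbN$ the sequence $c(i,\cdot)\in\ell(\bbN^{m-1})$ satisfies $\sum_{j'\in\bbN^{m-1}}|c(i,j')|^2\le\|c\|_{\ell^2(\bbN^m)}^2<\infty$, and similarly every slice $c(\cdot,\mathbf{j})\in\ell(\bbN)$, $\mathbf{j}\in\bbN^{m-1}$, lies in $\ell^2(\bbN)$. Now, using the linear independence of $\{a^{(\ell)}\}$ and applying Lemma~\ref{lma.f1} (with $m=1$), I obtain indices $i_1,\dots,i_L\in\bbN$ for which the matrix $A=\{a^{(k)}(i_\ell)\}_{\ell,k=1}^L$ is invertible. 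Since $c(i_\ell,\cdot)=\sum_{k=1}^L a^{(k)}(i_\ell)\,b^{(k)}$, inverting this system expresses each $b^{(k)}=\sum_\ell (A^{-1})_{k\ell}\,c(i_\ell,\cdot)$ as a finite linear combination of square-summable slices, whence $b^{(k)}\in\ell^2(\bbN^{m-1})$. Symmetrically, applying Lemma~\ref{lma.f1} to the linearly independent $\{b^{(\ell)}\}\subset\ell(\bbN^{m-1})$ produces multi-indices $j^{(1)},\dots,j^{(L)}\in\bbN^{m-1}$ with $\{b^{(k)}(j^{(\ell)})\}_{\ell,k}$ invertible; the corresponding system $c(\cdot,j^{(\ell)})=\sum_k b^{(k)}(j^{(\ell)})\,a^{(k)}$ then expresses each $a^{(k)}$ as a finite combination of the $\ell^2(\bbN)$-slices $c(\cdot,j^{(\ell)})$, so $a^{(k)}\in\ell^2(\bbN)$.

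Finally, each $b^{(k)}$ now lies in $\ell^2(\bbN^{m-1})\cap\ell(\bbN)^{\otimes(m-1)}$, so the induction hypothesis rewrites it as a finite sum of pure tensors with all factors in $\ell^2(\bbN)$. Tensoring each such term with the now square-summable head factor $a^{(k)}$ and summing over $k$ yields a representation of $c$ of the form \eqref{eq:f1} with all factors in $\ell^2(\bbN)$, completing the induction. The only point requiring genuine care is that the minimization step destroys the pure-tensor structure of the tails, so the argument must actually recurse on $m$ rather than terminate after a single splitting; the bookkeeping of the two invertible systems, via Lemma~\ref{lma.f1} and the square-summability of slices, is otherwise routine.
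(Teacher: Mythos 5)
Your proof is correct. The core mechanism is the same as the paper's --- square-summability of the slices of $c$, Lemma~\ref{lma.f1} to produce an invertible evaluation matrix, and inversion of the resulting finite linear system to promote factors into $\ell^2$ --- but the organization is genuinely different. The paper does not induct on $m$: it keeps a representation by pure $m$-fold tensors throughout and runs $m$ successive rounds, in the first round grouping slots $1,\dots,m-1$ (whose test vectors $a_1^{(\ell)}\otimes\cdots\otimes a_{m-1}^{(\ell)}$ are still pure tensors), eliminating linear dependence among them by hand, and extracting $a_m^{(\ell)}\in\ell^2(\bbN)$; it then regroups slots $\{1,\dots,m-2,m\}$ to treat slot $m-1$, and must check at each stage that the elimination procedure preserves the $\ell^2$ memberships already gained. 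You instead make a single bipartite split $\ell(\bbN)\otimes\ell(\bbN)^{\otimes(m-1)}$, invoke minimality of the representation length to obtain linear independence on \emph{both} sides at once (thereby avoiding the paper's repeated elimination-and-preservation bookkeeping), apply Lemma~\ref{lma.f1} twice --- once per side --- and then recurse, which is indeed forced because minimization destroys purity of the tails $b^{(k)}$, exactly as you note. Your route buys a cleaner independence argument and the additional conclusion $b^{(k)}\in\ell^2(\bbN^{m-1})$ (square-summability of the grouped tails, which the paper never needs or establishes); the paper's route buys freedom from induction and keeps every object a concrete one-variable sequence throughout, at the cost of the stage-by-stage dependence elimination. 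Both arguments are complete.
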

\begin{proof}
\emph{Step 1:}
To begin, let us prove that there exists a representation \eqref{eq:f1} with the last entry
$c_m^{(\ell)}\in\ell^2(\bbN)$ for all $\ell$. Later we will inductively deal with the other entries.
We start with a representation 
\begin{equation}
c=\sum_{\ell=1}^L a_1^{(\ell)}\otimes\cdots\otimes a_m^{(\ell)},
\label{eq:f2}
\end{equation}
where $a_1^{(\ell)},\dots,a_m^{(\ell)}\in \ell(\bbN)$; such a representation exists
by the assumption that $c\in\ell(\bbN)^{\otimes m}$. 
Consider the tensor products
\begin{equation}
a_1^{(\ell)}\otimes\cdots\otimes a_{m-1}^{(\ell)}, \quad \ell=1,\dots,L.
\label{eq:f3}
\end{equation}
First we want to reduce the problem to the case when the $L$ products of
\eqref{eq:f3} are linearly independent in $\ell(\bbN^{m-1})$. 
Suppose that they are linearly dependent. Then we can express one of them
as a linear combination of the other ones. 
Substituting this expression back into the sum \eqref{eq:f2}, we reduce this sum to a similar 
one with $L-1$ terms. Continuing this way, eventually we will be able to reduce the 
problem to the case when all the tensor products in \eqref{eq:f3} are linearly independent. 

Next, assuming that these reductions have already been made, so that the $L$ terms in 
\eqref{eq:f3} are linearly independent, let us apply Lemma~\ref{lma.f1} to these terms. 
This ensures that we can chose multi-indices
$j^{(1)},\dots,j^{(L)}\in\bbN^{m-1}$ such that the $L\times L$ matrix
$$
\{A_k^{(\ell)}\}_{\ell,k=1}^L,
\quad
A_k^{(\ell)}=a_1^{(\ell)}(j_1^{(k)})\dots a_{m-1}^{(\ell)}(j_{m-1}^{(k)}),
$$
is invertible. Next, our assumption $c\in \ell^2(\bbN^m)$ 
implies, in particular, that
$$
\sum_{j_m=1}^\infty \abs{c(j)}^2<\infty,
\quad
j=(j_1^{(k)},\dots,j_{m-1}^{(k)},j_m),
$$
for each of our chosen multi-indices $j^{(k)}$. Noting that
$$c(j) = \sum_{\ell=1}^L A_k^{(\ell)}a_m^{(\ell)}(j_m), \quad
j=(j_1^{(k)},\dots,j_{m-1}^{(k)},j_m),$$
we hence have that
$$
\sum_{\ell=1}^L A_k^{(\ell)}a_m^{(\ell)}\in\ell^2(\bbN),
\quad
k=1,\dots,L.
$$
Since the matrix $\{A_k^{(\ell)}\}$ is invertible, it follows
that $a_m^{(\ell)}\in \ell^2(\bbN)$ for every $\ell=1,\dots,L$. This finishes the first step of the proof.

\emph{Step 2:}
Now we start from a representation \eqref{eq:f2} already set up so that $a_m^{(\ell)}\in \ell^2(\bbN)$ for $\ell=1,\dots,L$. 
In this step we prove that there exists a possibly different representation \eqref{eq:f2} where
both $a_m^{(\ell)}\in \ell^2(\bbN)$ and $a_{m-1}^{(\ell)}\in \ell^2(\bbN)$ for every $\ell$. 
As in the previous step, we consider the tensor products
$$
a_1^{(\ell)}\otimes\cdots\otimes a_{m-2}^{(\ell)}\otimes a_m^{(\ell)}, \quad \ell=1,\dots,L.
$$
If these tensor products are linearly dependent, 
we eliminate the linear dependence as before, observing that the process of elimination preserves the property that $a_m^{(\ell)} \in \ell^2(\bbN)$ for every $\ell$. Once linear independence is established, we see that $a_{m-1}^{(\ell)}\in \ell^2(\bbN)$,
$\ell=1,\dots,L$, by using Lemma~\ref{lma.f1} exactly as in the previous step.

\emph{Step 3:}
We now repeat the process until we arrive at a representation with all $c_1^{(\ell)},\dots c_m^{(\ell)}\in \ell^2(\bbN)$. 
\end{proof}

To prepare for the proof of Lemma~\ref{lma.f15}, we next consider symmetric tensors. 
For $c_1,\dots,c_m\in \ell(\bbN)$, the symmetrization operator $\Sym$ is defined by
\begin{equation}
\Sym(c_1\otimes\dots\otimes c_m)
= c_1\odot\cdots\odot c_m =
\frac1{m!}\sum_{\sigma}c_{\sigma(1)}\otimes\cdots \otimes c_{\sigma(m)},
\label{eq:f5}
\end{equation}
where the sum is taken over all permutations $\sigma$ of the set $\{1,\dots,m\}$. 
This operator is idempotent, $\Sym(\Sym(c))=\Sym(c)$. 
We denote by $\ell(\bbN)^{\odot m}$ 
 the linear space 
which consists of all finite sums of  
symmetric products of the form \eqref{eq:f5}. Clearly, we have
$$
\ell(\bbN)^{\odot m}\subset \ell(\bbN)^{\otimes m}\subset \ell(\bbN^m).
$$

\begin{lemma}\label{lma.f3}
Let $c\in\ell^2(\bbN^m)\cap\ell(\bbN)^{\odot m}$.
Then $c$ can be represented as a finite sum of the form 
\begin{equation}
c=\sum_\ell c_1^{(\ell)}\odot\cdots\odot c_m^{(\ell)},
\label{eq:f6}
\end{equation}
where $c_1^{(\ell)},\dots,c_m^{(\ell)}\in \ell^2(\bbN)$ for all $\ell$. 
\end{lemma}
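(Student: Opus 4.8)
The plan is to bootstrap directly from Lemma~\ref{lma.f2} by symmetrizing. Since $\ell(\bbN)^{\odot m}\subset\ell(\bbN)^{\otimes m}$, the hypothesis $c\in\ell^2(\bbN^m)\cap\ell(\bbN)^{\odot m}$ in particular places $c$ in $\ell^2(\bbN^m)\cap\ell(\bbN)^{\otimes m}$. I would therefore first apply Lemma~\ref{lma.f2} to obtain a finite, purely tensorial, representation
$$
c=\sum_\ell c_1^{(\ell)}\otimes\cdots\otimes c_m^{(\ell)}, \qquad c_i^{(\ell)}\in\ell^2(\bbN).
$$

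Next I would apply the symmetrization operator $\Sym$ to both sides. The key point is that the left-hand side is unchanged: because $c$ lies in $\ell(\bbN)^{\odot m}$, it is a finite sum of symmetric products $c_1\odot\cdots\odot c_m=\Sym(c_1\otimes\cdots\otimes c_m)$, each of which is fixed by $\Sym$ on account of the idempotency $\Sym\circ\Sym=\Sym$ recorded after \eqref{eq:f5}; hence $\Sym(c)=c$. On the right-hand side, linearity of $\Sym$ together with the very definition \eqref{eq:f5} of the symmetric product gives $\Sym(c_1^{(\ell)}\otimes\cdots\otimes c_m^{(\ell)})=c_1^{(\ell)}\odot\cdots\odot c_m^{(\ell)}$. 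Equating the two sides yields precisely the desired representation \eqref{eq:f6}, with every factor in $\ell^2(\bbN)$.

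Finally, I would record the (trivial) observation that square-summability is preserved throughout: the symmetrization only averages over permutations of the tensor slots, which merely relabels the indices of a sequence in $\ell^2(\bbN^m)$ without affecting its $\ell^2$ norm, and the factors $c_i^{(\ell)}$ supplied by Lemma~\ref{lma.f2} were already in $\ell^2(\bbN)$. There is essentially no obstacle here; the only point requiring any care is the verification that membership in $\ell(\bbN)^{\odot m}$ genuinely forces $\Sym(c)=c$, which is immediate from idempotency. The entire substance of the lemma is thus carried by the antecedent Lemma~\ref{lma.f2}, and symmetrization simply transports its conclusion from the tensor setting to the symmetric-tensor setting.
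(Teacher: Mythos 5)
Your proposal is correct and follows exactly the paper's own argument: embed $\ell(\bbN)^{\odot m}$ into $\ell(\bbN)^{\otimes m}$, invoke Lemma~\ref{lma.f2} to get a tensorial representation with $\ell^2(\bbN)$ factors, then apply $\Sym$ to both sides, using $\Sym(c)=c$ (from idempotency and the definition \eqref{eq:f5}) on the left and $\Sym(c_1^{(\ell)}\otimes\cdots\otimes c_m^{(\ell)})=c_1^{(\ell)}\odot\cdots\odot c_m^{(\ell)}$ on the right. Your extra remarks about $\ell^2$ preservation are harmless but not needed, since the factors produced by Lemma~\ref{lma.f2} are already in $\ell^2(\bbN)$ and symmetrization introduces no new factors.
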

\begin{proof}
Since $\ell(\bbN)^{\odot m}\subset \ell(\bbN)^{\otimes m}$, by Lemma~\ref{lma.f2} there exists  a representation of $c$ as a finite sum
\eqref{eq:f1} such that $c_1^{(\ell)},\dots,c_m^{(\ell)}\in \ell^2(\bbN)$. 
Applying $\Sym$ to both sides of \eqref{eq:f1} and observing that $\Sym(c)=c$,
we arrive at \eqref{eq:f6}.
\end{proof}

\begin{proof}[Proof of Lemma~\ref{lma.f15}]
We start with some combinatorial preliminaries. 
The order $m\in\bbN$ is fixed throughout the proof. 
For $j\in\bbN^m$, it will be convenient to use the notation $z_j=\prod_{\ell=1}^m z_{j_\ell}$; 
this should not be confused with the earlier used notation $z^\kappa=\prod_{\ell=1}^\infty z_\ell^{\kappa_\ell}$
for $\kappa\in \bbN_0^{(\infty)}$. 
Observe that for $\kappa\in\bbN_0^{(\infty)}$ with $\abs{\kappa}=m$, we have
$$
\partial_{z_{j_1}}\cdots\partial_{z_{j_m}} z^\kappa=
\begin{cases}
\kappa!:=\kappa_1!\kappa_2!\dots, & \text{ if $z_j=z^\kappa$,}
\\
0& \text{ otherwise.}
\end{cases}
$$
Further, for any $j\in \bbN^m$, the set $\{j': z_{j'}=z_j\}$ coincides with the set of all permutations
of the coordinates of $j$. Hence the number of elements of this set is given by 
by the multinomial coefficient
$$
|\{j: z_j=z^\kappa\}| = \binom{|\kappa|}{\kappa}:=
\frac{\abs{\kappa}!}{\kappa!}. 
$$

With these preliminaries in mind, let 
$\bD({\ba})$ be a single factorizable differential operator of order $m$, $\ba=(a_1,\dots,a_m)$. 
Then, for any multi-index $\kappa$ with $\abs{\kappa}=m$, we have 
$$
\bD({\ba})z^{\kappa}
=
\kappa!\sum_{j: z_j=z^\kappa} (a_1\otimes \cdots \otimes a_m)(j)
=
m! (a_1\odot \cdots \odot a_m)(j_*),
$$
where $j_*$ is any element of the set $\{j: z_j=z^\kappa\}$. 
This can be alternatively rewritten as
\[
\bD({\ba})z^{\kappa}
=
m!\binom{m}{\kappa}^{-1} \sum_{j: z_j=z^\kappa} (a_1\odot \cdots \odot a_m)(j)
=
\kappa! \sum_{j: z_j=z^\kappa} (a_1\odot \cdots \odot a_m)(j);
\label{eq:f7a}
\]
all terms in the last sum are equal to each other. 

Next, let  $\Lambda$ be as in \eqref{eq:f8}, with coefficients $\bc^{(k)}=(c_1^{(k)},\dots,c_m^{(k)})$, and let
$$
\bc=\sum_k c_1^{(k)}\odot\cdots \odot c_m^{(k)}.
$$
Then, by linearity \eqref{eq:f7a} extends to
$$
\Lambda(z^\kappa)=\kappa! \sum_{j: z_j=z^\kappa} \bc(j). 
$$

Further, since the Hilbert space $H^2_m(\bbT^\infty)$ has the orthonormal basis $\{z^\kappa\}_{|\kappa|=m}$, 
the boundedness of $\Lambda$ on $H^2_m(\bbT^\infty)$ means that
$$
\sum_{\kappa: \abs{\kappa}=m}\abs{\Lambda(z^\kappa)}^2<\infty.
$$
This yields 
$$
\sum_{\kappa: \abs{\kappa}=m}(\kappa!)^2 \Abs{\sum_{j: z_j=z^\kappa} \bc(j)}^2<\infty.
$$
Since all terms in the sum over $j$ here are equal to each other, we have
$$
\Abs{\sum_{j: z_j=z^\kappa} \bc(j)}^2=\binom{\abs{\kappa}}{\kappa}\sum_{j: z_j=z^\kappa}\abs{\bc(j)}^2,
$$
and so we obtain 
$$
\sum_{\kappa: \abs{\kappa}=m}\sum_{j: z_j=z^\kappa} \abs{\bc(j)}^2
\leq
\sum_{\kappa: \abs{\kappa}=m}\sum_{j: z_j=z^\kappa} 
(\kappa!)^2\binom{\abs{\kappa}}{\kappa}\abs{\bc(j)}^2
<\infty.
$$
The double sum here is simply the sum over $j\in\bbN^m$. 
Hence, we have shown that $\bc\in \ell^2(\bbN^m)$. 
Applying Lemma~\ref{lma.f3}, we obtain that $\bc$ can be represented in the form \eqref{eq:f6}
with all coefficients in $\ell^2(\bbN)$; 
this means that $\Lambda$ can be represented on
the form \eqref{eq:f8} with all $\bc^{(k)}\in \ell^2(\bbN^m)$.
\end{proof}

\subsection{Proof of Theorem~\ref{thm:helsonkronecker}(ii), the ``only if'' part: Step 2} 
Here we complete the proof. Our main argument is that if we split the sum
\eqref{eq:finiterankhankel2} into separate parts corresponding to different points
$\lambda^{(\ell)}$ and to  differential operators of different orders, 
then each part will give rise to a bounded linear functional to which we can apply Lemma~\ref{lma.f15}.

Before going into details, we record for ease of reference three simple facts that underpin our argument.
\begin{proposition}\label{prp.auxiliary}
The following facts are true.
\begin{enumerate}[label={\rm ({\roman*})}]
\item
A point evaluation functional,
$$
f\mapsto f(\lambda), \quad f\in \bbC[z], \quad \lambda\in \bbC^\infty,
$$
is bounded on $H^2(\bbT^\infty)$ if and only if $\lambda\in \bbD^\infty\cap \ell^2$. 
\item
Let $f\mapsto \Lambda(f)$ be a bounded linear functional on $H^2(\bbT^\infty)$. 
Then for any $p\in\bbC[z]$, the linear functional $f\mapsto \Lambda(p f)$
is also bounded on $H^2(\bbT^\infty)$.
\item
For any factorizable differential operator $\bD(\bc)$ and any
$\lambda\in \bbC^\infty$, one has the ``commutation formula'' 
\begin{equation}
\bD({\bc})(f_1f_2)(\lambda)=f_1(\lambda)\bD({\bc})f_2(\lambda)+\bD({\mathbf b})f_2(\lambda),
\label{eq:commut}
\end{equation}
where $\ord(\bD(\mathbf b))<\ord(\bD(\bc))$, and  $\mathbf b$ depends on $\bc$,  $f_1$, $\lambda$.
\end{enumerate}
\end{proposition}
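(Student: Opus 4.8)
The plan is to treat the three facts separately, since they are independent and elementary. For (i), I would work in the orthonormal monomial basis $\{z^\kappa\}$ of $H^2(\bbT^\infty)$ from Section~\ref{sec:prelimhardy}. The point evaluation at $\lambda$ sends $z^\kappa \mapsto \lambda^\kappa$, so by the Riesz representation theorem it is bounded if and only if $\sum_\kappa |\lambda^\kappa|^2 < \infty$, in which case this sum is its squared norm. If some $|\lambda_j| \geq 1$, then already the monomials $z_j^n$ give $|\lambda_j^n| \not\to 0$ and the sum diverges, forcing $\lambda \in \bbD^\infty$. Assuming each $|\lambda_j| < 1$, the sum factors into geometric series, $\sum_\kappa |\lambda^\kappa|^2 = \prod_{j=1}^\infty (1 - |\lambda_j|^2)^{-1} = K_\infty(\lambda,\lambda)$, and a standard infinite-product estimate shows this is finite if and only if $\sum_j |\lambda_j|^2 < \infty$, i.e. $\lambda \in \ell^2$. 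This is exactly the condition under which the reproducing kernel at $\lambda$ exists, as already recorded in Section~\ref{sec:prelimhardy}.

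For (ii), the key is that multiplication by a fixed polynomial $p$ is bounded on $H^2(\bbT^\infty)$. Viewing $H^2(\bbT^\infty) \subset L^2(\bbT^\infty)$, the polynomial $p$ lies in $L^\infty(\bbT^\infty)$, so multiplication by $p$ is bounded on $L^2(\bbT^\infty)$ with norm $\|p\|_{L^\infty(\bbT^\infty)}$; moreover it preserves $H^2$, since the Fourier support of $pf$ remains in the nonnegative cone $\bbN_0^{(\infty)}$. Hence $M_p \colon H^2(\bbT^\infty) \to H^2(\bbT^\infty)$ is bounded, and $f \mapsto \Lambda(pf) = \Lambda(M_p f)$ is the composition of $M_p$ with the bounded functional $\Lambda$, therefore bounded.

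For (iii), I would iterate the ordinary product rule. Since each $D(c_i)$ is a first-order constant-coefficient operator, the operators $D(c_1), \ldots, D(c_M)$ commute, and applying the Leibniz rule $M$ times yields the expansion over subsets $S \subseteq \{1, \ldots, M\}$,
$$
\bD(\bc)(f_1 f_2) = \sum_{S} \Bigl(\prod_{i \in S} D(c_i) f_1\Bigr)\Bigl(\prod_{i \notin S} D(c_i) f_2\Bigr).
$$
Evaluating at $\lambda$, the term $S = \emptyset$ is precisely $f_1(\lambda)\,\bD(\bc)f_2(\lambda)$, while for each $S \neq \emptyset$ the factor $\prod_{i \in S} D(c_i) f_1$ evaluated at $\lambda$ is a scalar and $\prod_{i \notin S} D(c_i)$ is a factorizable operator of order $M - |S| < M$ built from the directions $c_i$. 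Collecting the remaining terms gives a differential operator $\bD(\mathbf b)$ of order strictly less than $M$, applied to $f_2$ and evaluated at $\lambda$, with coefficients depending on $\bc$, $f_1$ and $\lambda$; this is the asserted formula.

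None of the three facts presents a serious obstacle; the only point requiring a little care is the convergence analysis in (i), namely relating finiteness of the infinite product $\prod_j (1 - |\lambda_j|^2)^{-1}$ to the condition $\lambda \in \ell^2$, and checking that the $\bbD^\infty$ and $\ell^2$ conditions are genuinely both needed. A secondary point worth flagging in (iii) is that the lower-order remainder need not itself be factorizable, so $\bD(\mathbf b)$ should be read as a general differential operator of order $< M$ (a finite linear combination of factorizable operators); this is all that is needed for the inductive, order-by-order application of Lemma~\ref{lma.f15} in Step~2.
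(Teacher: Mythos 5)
Your proposal is correct and takes essentially the same approach as the paper, which disposes of the proposition in one line each: (i) via the power series representation $f(\lambda)=\sum_\kappa \jap{f,z^\kappa}_{H^2}\lambda^\kappa$ (the argument you flesh out with Parseval and the product $\prod_j(1-|\lambda_j|^2)^{-1}$), (ii) via the multiplier estimate $\norm{pf}_{H^2}\leq\norm{p}_{L^\infty}\norm{f}_{H^2}$, and (iii) as ``a direct calculation,'' i.e.\ your Leibniz expansion. Your closing caveat that the remainder $\bD(\mathbf b)$ in \eqref{eq:commut} must be read as a finite linear combination of factorizable operators of order $<M$, rather than a single factorizable operator, is accurate and matches exactly how the paper later uses the formula (only the vanishing of all lower-order terms is ever invoked).
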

\begin{proof}
(i) goes back to \cite{CG86} and can also be seen directly from the power series representation
$$
f(\lambda)=\sum_\kappa \jap{f,z^\kappa}_{H^2}\lambda^\kappa.
$$
(ii) is immediate from the estimate $\norm{pf}_{H^2}\leq \norm{p}_{L^\infty}\norm{f}_{H^2}$ and (iii) is a direct calculation. 
\end{proof}

First we consider the case of only one point $\lambda^{(\ell)}$ in the sum 
\eqref{eq:finiterankhankel2}.

\begin{lemma}\label{lma.f13}
Let $\lambda\in\bbC^\infty$ and let $\Lambda$ be a nonzero linear functional on $\bbC[z]$, bounded 
on $H^2(\bbT^\infty)$, of the form
\begin{equation}
\Lambda=\sum_{m=0}^M \Lambda^m, \quad
\Lambda^m(f)=\sum_{k} \bD({\bc^{(m,k)}})f(\lambda),
\label{eq:f13}
\end{equation}
where all sums are finite and $\ord(\bD({\bc^{(m,k)}}))=m$ for every $m$ and $k$.
Then $\lambda\in\bbD^\infty\cap \ell^2$. Furthermore, all terms $\Lambda^m$ are bounded on $H^2(\bbT^\infty)$ and can be represented as in \eqref{eq:f13} with all $\bc^{(m,k)}\in\ell^2(\bbN)$. 
\end{lemma}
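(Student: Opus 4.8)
The plan is to isolate, for the single base point $\lambda$, the terms of each fixed differentiation order $m$ and prove that each of them is separately bounded, after which Lemma~\ref{lma.f15} supplies the $\ell^2$ representation. The mechanism for this isolation is multiplication by the linear factors $z_j-\lambda_j$: these are bounded multipliers on $H^2(\bbT^\infty)$ (of $L^\infty$-norm $1+\abs{\lambda_j}$), so that $f\mapsto\Lambda(pf)$ stays bounded for every polynomial $p$ by Proposition~\ref{prp.auxiliary}(ii). Letting $M$ be the largest order with $\Lambda^M\neq0$, I would first dispose of the point $\lambda$ and then peel off the orders from the top by downward induction.

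The starting point is an elementary computation with the product rule. If $\bD(\bc)$ has order $M$, then in $\bD(\bc)\big(\prod_{r=1}^M (z_{j_r}-\lambda_{j_r})\,f\big)(\lambda)$ each of the $M$ first-order factors must differentiate a distinct linear factor $z_{j_r}-\lambda_{j_r}$, since any undifferentiated factor vanishes at $\lambda$; this leaves no derivative for $f$. Summing over the assignments and symmetrizing, and noting that an operator of order $m<M$ cannot annihilate all $M$ linear factors, I obtain the key identity
$$
\Lambda\Big(\prod_{r=1}^M (z_{j_r}-\lambda_{j_r})\,f\Big)=M!\,\bc(j_1,\dots,j_M)\,f(\lambda),\qquad \bc=\sum_k c_1^{(M,k)}\odot\cdots\odot c_M^{(M,k)},
$$
valid for every $f\in\bbC[z]$ and every $(j_1,\dots,j_M)\in\bbN^M$, where $\bc$ is the symmetric tensor representing $\Lambda^M$. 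Since $\Lambda^M\neq0$ forces $\bc(j_1,\dots,j_M)\neq0$ for some indices, the left-hand side is then a nonzero constant multiple of the point evaluation $f\mapsto f(\lambda)$; being bounded by Proposition~\ref{prp.auxiliary}(ii), Proposition~\ref{prp.auxiliary}(i) yields $\lambda\in\bbD^\infty\cap\ell^2$.

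With $\lambda\in\ell^2$ secured, I would next show $\bc\in\ell^2(\bbN^M)$ by testing $\Lambda$ against the polynomials $h_\xi=\sum_{\mathbf j}\xi(\mathbf j)\prod_{r=1}^M(z_{j_r}-\lambda_{j_r})$ for finitely supported arrays $\xi$. The identity above (with $f=1$) gives $\Lambda(h_\xi)=M!\sum_{\mathbf j}\xi(\mathbf j)\bc(\mathbf j)$, so by duality the claim reduces to the uniform Riesz-type estimate $\Norm{h_\xi}_{H^2(\bbT^\infty)}\leq C(M,\lambda)\Norm{\xi}_{\ell^2(\bbN^M)}$. This estimate is the main obstacle, and it is exactly where $\lambda\in\ell^2$ enters: expanding each product over subsets $S\subseteq\{1,\dots,M\}$ splits $h_\xi$ into homogeneous pieces in which the indices outside $S$ are contracted against $\lambda$, and each piece is controlled using orthonormality of the monomials $z^\kappa$ together with $\Norm{\lambda^{\otimes t}}_{\ell^2}=\Norm{\lambda}_{\ell^2}^t$; the finitely many subsets then sum to a finite constant. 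Once $\bc\in\ell^2(\bbN^M)$, equivalently $\Lambda^M$ is bounded on $H^2_M(\bbT^\infty)$, Lemma~\ref{lma.f15} provides a representation of $\Lambda^M$ with all directions in $\ell^2(\bbN)$, and the ``if'' part argument of Section~\ref{sec:7.3} (using $\lambda\in\bbD^\infty\cap\ell^2$) shows this $\Lambda^M$ is bounded on all of $H^2(\bbT^\infty)$.

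Finally, $\Lambda-\Lambda^M$ is again a bounded functional of the form \eqref{eq:f13}, based at the same $\lambda$ but of top order at most $M-1$; repeating the previous paragraph for it (the point-evaluation step is no longer needed, as $\lambda\in\bbD^\infty\cap\ell^2$ is already known) handles order $M-1$, and continuing downward to order $0$ — where $\Lambda^0$ is a multiple of point evaluation, bounded since $\lambda\in\bbD^\infty\cap\ell^2$ — completes the proof. The only delicate point is the uniform bound $\Norm{h_\xi}\leq C(M,\lambda)\Norm{\xi}$; the crude alternative of estimating each $\Lambda\big((z_j-\lambda_j)\,\cdot\,\big)$ separately fails, because $\Norm{z_j-\lambda_j}_{L^\infty}\geq1$ is not summable over $j$.
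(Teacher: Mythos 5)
Your proof is correct. Its first half is essentially the paper's own: the paper also isolates the point evaluation by choosing $p(z)=(z-\lambda)^\kappa$ with $\abs{\kappa}=M$ and $\Lambda(p)\neq 0$, and uses the commutation formula (Proposition~\ref{prp.auxiliary}(iii)) to get $\Lambda(pf)=\Lambda(p)f(\lambda)$ --- your identity $\Lambda\bigl(\prod_{r=1}^M(z_{j_r}-\lambda_{j_r})f\bigr)=M!\,\bc(\mathbf{j})f(\lambda)$ is the same device with the tuple $\mathbf{j}$ realizing $\kappa$, and your product-rule bookkeeping (including repeated indices, and the fact that $\Lambda^M\neq0$ forces $\bc\not\equiv0$) checks out. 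Where you genuinely diverge is the induction. The paper inducts \emph{upward} on the order $m$: on the homogeneous subspace $H^2_m(\bbT^\infty)$ every term of order greater than $m$ vanishes identically, so $\Lambda^m=\Lambda-\sum_{s<m}\Lambda^s$ there; boundedness of $\Lambda^m$ on $H^2_m(\bbT^\infty)$ is then free, Lemma~\ref{lma.f15} produces the $\ell^2$ directions, and the argument of Section~\ref{sec:7.3} upgrades to boundedness on all of $H^2(\bbT^\infty)$. You instead induct \emph{downward}, extracting the top-order symmetric tensor $\bc$ by testing against $h_\xi$ and proving the Riesz-type estimate $\norm{h_\xi}_{H^2(\bbT^\infty)}\leq C(M,\norm{\lambda}_{\ell^2})\norm{\xi}_{\ell^2(\bbN^M)}$; this estimate is true and your sketch works: expanding over subsets $S$, contracting the complementary slots against $\lambda$ gives, by Cauchy--Schwarz, an array $\eta_S$ with $\norm{\eta_S}_{\ell^2}\leq\norm{\xi}_{\ell^2}\norm{\lambda}_{\ell^2}^{M-\abs{S}}$, and collapsing tuples onto monomials $z^\kappa$ costs at most $\sqrt{\abs{S}!}$ since the multiplicity of $z^\kappa$ among tuples is $\binom{\abs{\kappa}}{\kappa}\leq\abs{S}!$. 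Duality then yields $\bc\in\ell^2(\bbN^M)$ directly, after which Lemma~\ref{lma.f3} (or Lemma~\ref{lma.f15}, via the equivalence of $\bc\in\ell^2(\bbN^M)$ with boundedness on $H^2_M(\bbT^\infty)$) supplies the decomposition, and subtraction plus Section~\ref{sec:7.3} closes the recursion. What the paper's route buys is economy --- no test-function estimate is needed because homogeneity kills all higher-order terms for free; what your route buys is a direct quantitative derivation of $\bc\in\ell^2(\bbN^M)$ from the boundedness of the full functional, in effect reproving the core of Lemma~\ref{lma.f15}'s proof, and your closing observation is apt: the crude factor-by-factor multiplier bound would indeed fail, which is precisely the difficulty the paper's homogeneous-restriction trick sidesteps rather than confronts.
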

\begin{proof}
Without loss of generality, let us assume that the highest order term is non-zero: $\Lambda^M\not=0$. 
First let us prove that $\lambda\in\bbD^\infty\cap\ell^2$. 
If $M=0$, then this follows from Proposition~\ref{prp.auxiliary}(i). 
Suppose instead that $M\geq1$.  
Since $\Lambda^M\neq0$, there is a multi-index $\kappa$,  $\abs{\kappa}=M$, such that $\Lambda(p)\neq0$
for the polynomial $p(z)=(z-\lambda)^\kappa$.
Applying the commutation formula \eqref{eq:commut} with $f_1=f$, $f_2=p$, we obtain that
$$
\Lambda(pf)=\Lambda(p)f(\lambda),
$$
since $\bD({\mathbf b})p(\lambda)=0$ whenever $\ord (\bD({\mathbf b}))<M$. 
By Proposition~\ref{prp.auxiliary}(ii) the functional $f\mapsto \Lambda(pf)$ is bounded on $H^2(\bbT^\infty)$, and hence it follows 
that $\lambda\in\bbD^\infty\cap\ell^2$. 

Next, let us prove that each $\Lambda^m$ is bounded and can be represented as in \eqref{eq:f13}
with all $\bc^{(m,k)}\in\ell^2$. 
We have already seen that $\Lambda^0$ is bounded. 
Assuming that $\Lambda^0,\dots,\Lambda^{m-1}$ are bounded, for any homogeneous
polynomial $f$ of degree $m$ we have
$$
\Lambda^m(f)=\Lambda(f)-\sum_{s=0}^{m-1}\Lambda^s(f).
$$
The right hand side of this expression is bounded on $H^2(\bbT^\infty)$ by assumption. 
Hence $\Lambda^m$ is bounded on the subspace $H^2_m(\bbT^\infty)$. 
By Lemma~\ref{lma.f15}, we have a representation  \eqref{eq:f13} for $\Lambda^m$
with all coefficients $\bc^{(m,k)}\in\ell^2$. 
By the argument of Section~\ref{sec:7.3}, it follows that $\Lambda^m$ is bounded on the whole space $H^2(\bbT^\infty)$. 
Thus, by induction, all $\Lambda^m$ are bounded and can be written on the required form. 
\end{proof}

\begin{proof}[Proof of Theorem~\ref{thm:helsonkronecker}(ii), the ``only if'' part]

Suppose that we have a representation \eqref{eq:finiterankhankel2} such that 
the corresponding form 
$$
\Lambda(f)=[f,1]
$$
is bounded on $H^2(\bbT^\infty)$. 
Let us change our notation slightly and rewrite $\Lambda$ on the form 
\[
\Lambda=\sum_{m=0}^M\sum_\ell \Lambda^m_\ell, 
\quad\text{ with }\quad
\Lambda_\ell^m(f):=\sum_{k} \bD({\bc^{(m,\ell,k)}})f(\lambda^{(\ell)}),
\label{eq:f12}
\]
where the points $\lambda^{(\ell)}$ are \emph{distinct}, 
all sums are finite and $\ord(\bD({\bc^{(m,\ell,k)}}))=m$ for every $m$, $\ell$, and $k$.
Let us prove that every term $\Lambda_\ell^m$ is bounded on $H^2(\bbT^\infty)$.
We will use induction in $M$.

Suppose first that $M=0$, i.e. that $\Lambda$ is a sum of evaluation functionals.
If there is only one point $\lambda^{(\ell)}$, the statement follows from Proposition~\ref{prp.auxiliary}(i).  
Suppose that there is more than one point $\lambda^{(\ell)}$. 
Fix some value of $\ell$, say $\ell=1$. 
For each $\ell\not=1$, choose $j_\ell$ such that $\lambda^{(\ell)}_{j_\ell}\not=\lambda^{(1)}_{j_\ell}$, and let
\begin{equation}
p(z)=\prod_{\ell\not=1}(z_{j_\ell}-\lambda^{(\ell)}_{j_\ell}).
\label{eq:f14}
\end{equation}
Then $p(\lambda^{(1)})\not=0$ and $p(\lambda^{(\ell)})=0$ for all $\ell\not=1$. 
It follows that 
$$
\Lambda(pf)=p(\lambda^{(1)})\Lambda_1^0(f),
$$
and so, by Proposition~\ref{prp.auxiliary}(ii), the functional $\Lambda_1^0$ is bounded. 
Applying this argument to every point $\lambda^{(\ell)}$ we obtain
that $\Lambda^0_\ell$ is bounded for every $\ell$. 

Now assume that the required statement is proven for some $M>0$; we will then prove it for $M$ replaced by $M+1$. 
Let us first prove that $\Lambda_1^{M+1}$ is bounded, assuming that it is non-zero.
Let $p$ be the  polynomial of \eqref{eq:f14} and consider its power $p^{M+2}$. 
Then it is clear that 
$$
\Lambda_\ell^m(p^{M+2}f)=0,\quad \forall \ell\not=1, \quad  m=0,1,\dots,M+1.
$$
It follows that 
$$
\Lambda(p^{M+2}f)=\sum_{m=0}^{M+1}\Lambda_1^m (p^{M+2}f).
$$
By the commutation formula \eqref{eq:commut},
the last functional can be written as 
\begin{equation}
\sum_{m=0}^{M+1}\Lambda_1^m (p^{M+2}f)
=
p^{M+2}(\lambda^{(1)})\Lambda_1^{M+1}(f)
+\wt\Lambda_1(f),
\label{eq:f7}
\end{equation}
where $\ord(\wt\Lambda_1)\leq M$. 
Since the linear functional 
$f\mapsto \Lambda(p^{M+2}f)$ is bounded by Proposition~\ref{prp.auxiliary}(ii),
the linear functional on the right hand side of \eqref{eq:f7} is also bounded. 
Applying Lemma~\ref{lma.f13}, we find that the top order functional $\Lambda_1^{M+1}$ is bounded. 

Repeating the above argument for every $\ell$, we obtain that all the highest order terms $\Lambda_\ell^{M+1}$
are bounded. Subtracting them from $\Lambda$, we obtain that the functional
$$
\sum_{m=0}^M\sum_\ell \Lambda_\ell^{m}
$$
is bounded.
By the inductive hypothesis, it follows that all $\Lambda_\ell^m$ are bounded. 

Now applying Lemma~\ref{lma.f13} to each term $\Lambda_\ell^m$, we get that 
$\lambda^{(\ell)}\in \bbD^\infty\cap\ell^2$ and that  $\Lambda_\ell^m$ can be rewritten on the required form \eqref{eq:f12}
with $\bc^{(m,\ell,k)}\in \ell^2$. This yields the desired representation of $\Lambda$. 
\end{proof}

\appendix
\section{Carleson measures on $(-1,1)^\infty$}

Let $\mu$ be a positive Borel measure on $\bbR^\infty$, as in Section~\ref{sec.c2}. 
 The measures $\mu$ considered in Section~\ref{sec:bounded} are concentrated on $(-1,1)^\infty$, which means that $\mu(\bbR^\infty \setminus (-1,1)^\infty) = 0$. In this case $\mu$ may be considered as a Borel measure on $(-1,1)^\infty$, equipped with the usual product topology. This is so because the relative sigma-algebra on $(-1,1)^\infty$, induced by the Borel sets of $\bbR^\infty$, coincides with the Borel sigma-algebra of $(-1,1)^\infty$. 
\begin{proposition} \label{prop:polydense}
Let $\mu$ be a finite Borel measure on $(-1,1)^\infty$. Then functions of the form $t \mapsto f((t_1, \ldots, t_d))$, $t \in (-1,1)^\infty$, $d < \infty$, $f$ continuous and compactly supported in $(-1,1)^d$, are dense in $L^2(\mu)$. In particular,  polynomials 
$$f(t) = \sum_{\kappa} a_\kappa t^\kappa, \qquad a_\kappa \in \bbC,$$
span a dense set in $L^2(\mu)$.
\end{proposition}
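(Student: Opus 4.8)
The plan is to reduce the infinite-dimensional statement to finitely many coordinates by a measure-theoretic approximation, and then to dispatch each finite-dimensional marginal by classical density theorems. First I would record the structural preliminaries: since $(-1,1)^\infty$ is a countable product of second-countable spaces, its Borel $\sigma$-algebra $\calB$ coincides with the product $\sigma$-algebra, that is, with $\sigma\bigl(\bigcup_{d}\calF_d\bigr)$, where $\calF_d$ denotes the $\sigma$-algebra generated by the first $d$ coordinate projections. The algebras $\calF_d$ increase with $d$. I would also note that a finite Borel measure on the Polish space $(-1,1)^\infty$ is automatically Radon, which is what legitimizes the regularity used later.

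The heart of the matter is the second step: showing that $\calF_d$-measurable $L^2$-functions — equivalently, functions depending only on $t_1,\ldots,t_d$ — are dense in $L^2(\mu)$ as $d\to\infty$. I would argue by a Dynkin system. Let $\calS$ be the closed linear span of $\bigcup_d L^2(\mu,\calF_d)$ in $L^2(\mu)$, and let $\mathcal{D}=\{A\in\calB \, : \, \1_A\in\calS\}$. Since $\calS$ is a closed subspace containing the constants, $\mathcal{D}$ contains $(-1,1)^\infty$ and is closed under complements; and because $\mu$ is finite, if $A_n\uparrow A$ then $\1_{A_n}\to\1_A$ in $L^2(\mu)$ by dominated convergence, so $\mathcal{D}$ is closed under increasing unions. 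Thus $\mathcal{D}$ is a $\lambda$-system containing the $\pi$-system (indeed algebra) $\bigcup_d\calF_d$, and the $\pi$--$\lambda$ theorem gives $\mathcal{D}=\calB$. Hence every Borel indicator, and therefore every simple function, lies in $\calS$; as simple functions are dense in $L^2(\mu)$, we conclude $\calS=L^2(\mu)$. (Equivalently, one may invoke $\mathbb{E}[g\mid\calF_d]\to g$ in $L^2(\mu)$ by martingale convergence.)

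For the third step I would fix $d$ and pass to the pushforward $\nu_d:=\mu\circ\pi_d^{-1}$ under the projection $\pi_d\colon(-1,1)^\infty\to(-1,1)^d$, a finite Borel measure on $(-1,1)^d$ for which $f\mapsto f\circ\pi_d$ is an isometry of $L^2(\nu_d)$ onto $L^2(\mu,\calF_d)$. It then suffices to approximate in $L^2(\nu_d)$. For the compactly supported assertion I use that $(-1,1)^d$ is locally compact, $\sigma$-compact and Hausdorff, so that $C_c((-1,1)^d)$ is dense in $L^2(\nu_d)$ by the standard regularity argument (inner and outer regularity of $\nu_d$ together with Urysohn's lemma); pulling back through $\pi_d$ yields exactly the functions $t\mapsto f((t_1,\ldots,t_d))$ in the statement. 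For the polynomial assertion I instead regard $\nu_d$ as a finite Borel measure on the compact cube $[-1,1]^d\supset(-1,1)^d$; then $C([-1,1]^d)$ is dense in $L^2(\nu_d)$, and by the Stone--Weierstrass theorem polynomials are uniformly dense in $C([-1,1]^d)$, hence dense in $L^2(\nu_d)$. Combining either approximation with the second step proves both parts.

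The only genuine obstacle is the second step; the finite-dimensional approximations are entirely classical. Viewing $\nu_d$ on the compact closure $[-1,1]^d$ is the clean way to sidestep the potential difficulty that $\nu_d$ may carry mass near the boundary of $(-1,1)^d$, where polynomials are not uniformly small — working on the compact cube makes the uniform Stone--Weierstrass estimate directly applicable without any tail control.
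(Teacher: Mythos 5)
Your proof is correct and takes essentially the same route as the paper: reduction to finitely many coordinates via the algebra of cylinder sets (your $\pi$--$\lambda$ argument simply supplies a proof of the density of cylinder-set indicators that the paper asserts as standard), then regularity of the finite-dimensional marginal measure to get density of $C_c((-1,1)^d)$, and Stone--Weierstrass for the polynomial claim. The only difference is expository: you approximate $C([-1,1]^d)$ on the closed cube directly, which makes explicit the boundary point that the paper's one-line appeal to Stone--Weierstrass leaves implicit.
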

\begin{proof}
Let $\Gamma$ be the \textit{ring} of sets generated by the Borel cylinder sets of $(-1,1)^\infty$. Then the linear span of characteristic functions of sets in $\Gamma$ is dense in $L^2(\mu)$. Any such characteristic function only depends on, say, the first $d < \infty$ variables. The relative (projected) measure $\mu_d$ on $(-1,1)^d$ is a finite Borel measure on the locally compact space $(-1,1)^d$, and is hence a regular measure. Therefore any characteristic function of the above type may be approximated, in $L^2(\mu)$, by functions of the type mentioned in the statement. The last claim now follows by the Stone-Weierstrass theorem.
\end{proof}
Let us now discuss the natural testing condition for Carleson measures $\mu$ for $H^2(\bbT^\infty)$, in the case
that $\mu$ is supported on $(-1,1)^\infty$. 
As in \eqref{e0}, we extend the finite-variable reproducing kernel $K_d(s,t)$ to $s,t\in\bbD^\infty$. 
Fix $s \in (-1,1)^\infty$ and for $d \geq 1$ consider the function $z\mapsto K_d(z,s)$, where $z\in\bbD^\infty$.  
Clearly, this function can be approximated by polynomials in the variables $z_1, \ldots, z_d$, 
with uniform convergence in the entire polydisc $\bbD^\infty$. 
Hence, if $\mu$ is Carleson measure for $H^2(\bbT^\infty)$, the defining 
inequality \eqref{d1} for Carleson measures extends to hold for $f(z)=K_{d}(z,s)$, implying that
\begin{align}
\label{eq:reprkernel}
\int_{(-1,1)^\infty} |K_{d}(t,s)|^2 \, d\mu(t) &\leq C\|K_{d}(\cdot,s)\|^2_{H^2(\bbT^\infty)} \nonumber \\ 
&= CK_d(s,s) = C\prod_{j=1}^d \frac{1}{1-s_j^2}.
\end{align}
This leads to the following proposition. 
\begin{proposition} \label{prop:carlcond}
Suppose that $\mu$ is a measure on $(-1,1)^\infty$ 
which is a Carleson measure for $H^2(\bbT^\infty)$. 
For any $s \in (-1,1)^\infty$, consider the set
$$I_s = \{t \in (-1,1)^\infty \, ; \, \forall j \geq 1 : |t_j| \geq |s_j| \; \mathrm{and} \; t_j s_j \geq 0\}.$$
Then there is $C > 0$ such that the Carleson condition
\[ \label{eq:carlcond} \mu(I_s) \leq  C\prod_{j=1}^\infty (1-s_j^2), \]
holds, where if $s \notin \ell^2(\bbN)$ the right-hand side is to be understood as $0$. 
\end{proposition}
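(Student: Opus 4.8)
The plan is to adapt the implication (2)$\Rightarrow$(3) from the proof of Theorem~\ref{thm.e2} to the infinite-variable setting, testing the Carleson condition on the truncated reproducing kernels $K_d(\cdot,s)$ and then letting $d\to\infty$. The crucial structural point is that the constant $C$ in \eqref{eq:reprkernel} is the Carleson constant of $\mu$, and is therefore \emph{uniform in $d$}; this uniformity is exactly what makes the limiting argument go through.

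First I would record the pointwise lower bound on the kernel over the set $I_s$. Fix $s\in(-1,1)^\infty$ and take $t\in I_s$. For every $j$ the two defining conditions $t_js_j\geq0$ and $|t_j|\geq|s_j|$ give $t_js_j=|t_j|\,|s_j|\geq s_j^2$, so that $0<1-t_js_j\leq 1-s_j^2$. Since $K_d$ depends only on the first $d$ coordinates and all its factors are real and positive on $I_s$, it follows that for every $d\geq1$,
$$
K_d(t,s)=\prod_{j=1}^d\frac{1}{1-t_js_j}\geq\prod_{j=1}^d\frac{1}{1-s_j^2}>0,
\qquad t\in I_s,
$$
and hence $|K_d(t,s)|^2\geq\prod_{j=1}^d(1-s_j^2)^{-2}$ on $I_s$.

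Next I would combine this with \eqref{eq:reprkernel}. Since $I_s\subset(-1,1)^\infty$, for each fixed $d$ we obtain
$$
\prod_{j=1}^d(1-s_j^2)^{-2}\,\mu(I_s)
\leq\int_{I_s}|K_d(t,s)|^2\,d\mu(t)
\leq\int_{(-1,1)^\infty}|K_d(t,s)|^2\,d\mu(t)
\leq C\prod_{j=1}^d(1-s_j^2)^{-1},
$$
the last step being precisely \eqref{eq:reprkernel}. Dividing through yields
$$
\mu(I_s)\leq C\prod_{j=1}^d(1-s_j^2)
$$
for \emph{every} $d\geq1$, with one and the same constant $C$. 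Finally I would pass to the limit: the partial products are decreasing in $d$ (each factor lies in $(0,1]$), so their limit equals their infimum, and taking this infimum gives $\mu(I_s)\leq C\prod_{j=1}^\infty(1-s_j^2)$, which is \eqref{eq:carlcond}. To match the stated convention I would invoke the elementary product criterion: for $a_j=s_j^2\in[0,1)$ one has $\prod_{j=1}^\infty(1-a_j)>0$ when $\sum_j a_j<\infty$ and $\prod_{j=1}^\infty(1-a_j)=0$ when $\sum_j a_j=\infty$. Thus the right-hand side is a genuine positive number exactly when $s\in\ell^2(\bbN)$, and is $0$ otherwise, in which case the inequality forces $\mu(I_s)=0$.

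I do not expect a serious obstacle here, since the argument is a direct limiting version of the finite-dimensional computation and, because the bound holds for every finite $d$, no continuity-of-measure step is even required. The only points demanding a little care are the uniformity of the Carleson constant across truncations (already guaranteed by \eqref{eq:reprkernel}) and the correct interpretation of the infinite product in the non-$\ell^2$ case, both of which are handled above.
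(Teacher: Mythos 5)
Your proposal is correct and follows essentially the same route as the paper: test \eqref{eq:reprkernel} against the lower bound $|K_d(t,s)|^2 \geq \prod_{j=1}^d (1-s_j^2)^{-2}$ on $I_s$, divide, and let $d \to \infty$. Your additional remarks --- the verification $t_j s_j \geq s_j^2$, the uniformity of the Carleson constant in $d$, the monotonicity of the partial products, and the dichotomy $\prod_j(1-s_j^2) > 0 \Leftrightarrow s \in \ell^2(\bbN)$ --- are exactly the details the paper leaves implicit.
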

\begin{proof}
Note that when $s \in (-1,1)^\infty$ we have
$$|K_{d}(t,s)|^2 \geq \prod_{j=1}^d (1-s_j^2)^{-2}, \qquad t \in I_s.$$
From \eqref{eq:reprkernel} we deduce that 
$$\mu(I_s) \leq  C\prod_{j=1}^d (1-s_j^2).$$
In the limit as $d \to \infty$ we obtain \eqref{eq:carlcond}. 
\end{proof}
In the classical context of the Hardy space $H^2(\bbT)$, testing on reproducing kernels is sufficient to guarantee that a measure is Carleson. In our relatively simple situation of measures on $(-1,1)^\infty$, one could conjecture that $\mu$ is Carleson for $H^2(\bbT^\infty)$ if \eqref{eq:reprkernel} holds. However, this turns out to be false.
\begin{proposition} \label{prop:notcarleson}
There is a positive measure $\mu$, concentrated on $(-1,1)^\infty$, which satisfies
$$
\sup_{\substack{s \in (-1,1)^\infty \\ d\geq1}} \left( \prod_{j=1}^d (1-s_j^2) \int_{(-1,1)^\infty} |K_{d}(t,s)|^2 \, d\mu(t) \right) < \infty
$$
but which is not a Carleson measure for $H^2(\bbT^\infty)$.
\end{proposition}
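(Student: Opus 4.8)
The plan is to build $\mu$ by superposing, on disjoint blocks of coordinates, scaled tensor powers of a single one-variable measure that already exhibits a gap between the reproducing-kernel test and the genuine Carleson embedding. The underlying mechanism is that on $(-1,1)^d$ the window/reproducing-kernel condition and boundedness of the embedding are equivalent (Theorem~\ref{thm.e2}), but the equivalence constant grows geometrically in $d$: in the implication (5)$\Rightarrow$(1) the loss is exactly the norm $\pi^d$ of the tensor Hilbert matrix $H_1(\gamma)^{\otimes d}$, whereas the reproducing-kernel test only feels a factor $2^d$. I would exploit precisely this $2^d$-versus-$\pi^d$ discrepancy.

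The building block is Lebesgue measure on $(0,1)$. First I would record the elementary identity $\int_0^1 (1-ts)^{-2}\,dt = (1-s)^{-1}$, which produces, per variable, the reproducing-kernel test factor $(1-s^2)(1-s)^{-1}=1+s\le 2$, while the associated one-variable Hankel matrix is the Hilbert matrix $\{1/(1+j+k)\}$, of norm $\pi$. Concretely, fix disjoint finite sets $B_n\subset\bbN$ with $\abs{B_n}=n$, and let $\mu_n$ be the probability measure on $(-1,1)^\infty$ equal to the $n$-fold product of Lebesgue measure on $(0,1)$ in the coordinates belonging to $B_n$ and to the unit point mass $\delta_0$ in every other coordinate. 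Choosing a ratio $\rho$ with $2<\rho<\pi$ (e.g. $\rho=3$, legitimate since $\pi>3$), I set
\[
\mu=\sum_{n=1}^\infty \rho^{-n}\,\mu_n,
\]
a finite positive measure on $(-1,1)^\infty$ of total mass $\sum_n\rho^{-n}$.

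Next I would verify the two required properties. For the testing condition, since all integrands are non-negative I may integrate term by term; extending $K_d$ as in \eqref{e0} and using the identity above, for each $n$ one computes
\[
\prod_{j=1}^d (1-s_j^2)\int_{(-1,1)^\infty}\abs{K_d(t,s)}^2\,d\mu_n(t)
=
\prod_{\substack{j\in B_n\\ j\le d}}(1+s_j)\prod_{\substack{j\notin B_n\\ j\le d}}(1-s_j^2)
\le 2^{\abs{B_n\cap\{1,\dots,d\}}}\le 2^n,
\]
the point masses contributing only factors bounded by $1$. Summing against the weights gives the uniform bound $\prod_{j\le d}(1-s_j^2)\int\abs{K_d(t,s)}^2\,d\mu\le\sum_n\rho^{-n}2^n<\infty$, independently of $s$ and $d$. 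For the failure of the Carleson property I would test the embedding $\calN_\infty$ on polynomials $f$ depending only on the $B_n$-coordinates. Because the blocks are disjoint, every $\mu_m$ with $m\ne n$ places a point mass at $0$ in those coordinates, so such terms contribute only $\abs{f(0)}^2\ge 0$; hence $\int\abs{f}^2\,d\mu\ge\rho^{-n}\int_{(0,1)^n}\abs{f}^2\,dt$. Taking the supremum over such $f$ and using that the embedding norm of the $n$-fold Lebesgue product equals $\norm{H_1(\gamma)^{\otimes n}}^{1/2}=\pi^{n/2}$ (the Remark following Theorem~\ref{thm.e2}), I obtain $\norm{\calN_\infty}^2\ge\rho^{-n}\pi^n=(\pi/\rho)^n$ for every $n$, which tends to infinity. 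Thus $\calN_\infty$ is unbounded and $\mu$ is not a Carleson measure for $H^2(\bbT^\infty)$.

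The essential idea, and the only non-routine step, is the identification of the building block together with the numerical gap $\pi/2>1$ between the reproducing-kernel test ($1+s\le 2$) and the embedding norm ($\pi$) in a single variable; once this is in place, any $\rho\in(2,\pi)$ simultaneously keeps the tensorised test summable and forces the tensorised embedding norms to diverge. The remainder is bookkeeping: justifying the term-by-term integration (Tonelli, all integrands non-negative), checking that $\mu$ is a genuine finite Borel measure concentrated on $(-1,1)^\infty$, and observing that it suffices to test the embedding on polynomials supported on a single block.
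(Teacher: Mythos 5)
Your proof is correct, and it takes a genuinely different route from the paper's. The paper argues non-constructively: it introduces the Banach space $X$ of measures with finite testing norm, invokes the closed graph theorem to extract a hypothetical uniform bound $\norm{M(\hel_\mu)}\leq C\norm{\mu}_X$, and refutes that bound with the sequence of tensor powers $\mu_N=\nu^{\otimes N}\times\prod_{j>N}\delta_0$, $d\nu(t)=\tfrac{t}{2}\,dt$, for which $\norm{\mu_N}_X=1$ while $\norm{M(\hel_{\mu_N})}=(\pi/2)^N$ --- so the counterexample measure itself is never exhibited (and the proof requires verifying completeness of $X$ and closedness of $\mu\mapsto M(\hel_\mu)$). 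You instead build an explicit measure by placing the tensor blocks on disjoint coordinate sets and summing with geometric weights $\rho^{-n}$, $2<\rho<\pi$; the driving mechanism is the same quantitative gap (per-variable testing constant, for you $1+s\leq 2$ with Lebesgue measure on $(0,1)$ versus the paper's exact constant $1$ with $\tfrac{t}{2}\,dt$, against the Hilbert matrix norm $\pi$ resp. $\pi/2$, tensorized via $\norm{H_1(\gamma)^{\otimes n}}=\pi^n$). Your details check out: the identity $\int_0^1(1-ts)^{-2}\,dt=(1-s)^{-1}$ is valid for all $s\in(-1,1)$, the point-mass coordinates contribute factors $(1-s_j^2)\leq 1$, Tonelli justifies the termwise integration, and identifying the $n$-variable embedding norm with $\norm{H_n(\han)}^{1/2}=\pi^{n/2}$ is legitimate since $H_n(\han)=\calN_n^*\calN_n\geq 0$ and the supremum of the quadratic form over finitely supported sequences attains the operator norm. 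What your approach buys is an explicit counterexample with no soft functional analysis; what the paper's buys is a more compact construction at the cost of being an existence proof. Two minor remarks: disjointness of the blocks $B_n$ is a convenience rather than a necessity (nested blocks would also work, since your lower bound only drops non-negative cross terms, and the testing sum $\sum_n\rho^{-n}2^{\min(n,d)}$ still converges), and finiteness of $\mu$ --- which you note directly --- also follows from the testing condition at $s=0$, $d=1$.
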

\begin{proof}
If $\mu$ is a finite complex measure on $(-1,1)^\infty$, it induces a Helson matrix $M(\hel_\mu)$ by the formula \eqref{eq:mmtpoly}.
By Theorem~\ref{thm.d2}, if $\mu$ is a Carleson measure for $H^2(\bbT^\infty)$, then $M(\hel_\mu) \colon  \ell^2(\bbN) \to \ell^2(\bbN)$ is bounded.

Let $X$ be  the Banach space of complex measures on $(-1,1)^\infty$ which have finite norm
$$
\|\mu\|_X = \sup_{s \in (-1,1)^\infty, d\geq1} \left( \prod_{j=1}^d (1-s_j^2) \int_{(-1,1)^\infty} |K_{d}(t,s)|^2 \, d|\mu|(t)\right) < \infty.
$$
We leave it to the reader to verify that $X$ is complete under this norm. 

We prove the proposition by contradiction. 
Suppose it were the case that $|\mu|$ is a Carleson measure whenever $\mu$ has finite $X$-norm. 
The map $\mu \mapsto M(\hel_\mu)$ is then an everywhere defined linear map of $X$ to $B(\ell^2(\bbN))$, the Banach space of bounded linear operators on $\ell^2(\bbN)$. It is clear that the map $\mu \to M(\hel_\mu)$ is closed. Therefore, it follows from the closed graph theorem that there is an absolute constant $C < \infty$ such that
$$
\|M(\hel_\mu)\|_{B(\ell^2(\bbN))} \leq C\|\mu\|_X.
$$
We shall show by example that this is false.

Let $\nu$ be the measure on $(0,1)$ such that $d \nu(t) = \frac{t}{2}dt$. For $N \geq 1$, let $\mu_N$ be the measure 
$$\mu_N = \prod_{j=1}^N \nu \times \prod_{j=N+1}^\infty \delta_0$$
on $[0,1)^\infty$, where $\delta_0$ denotes the Dirac measure at $0$.

The moment sequence associated with $\mu_N$ is 
$$
\hel_N(n) 
= 
\int_{[0,1)^\infty} t^{\kappa(n)} \, d\mu_N(t) 
= 
\prod_{j=1}^N \frac{1}{2(\kappa_j + 2)} \prod_{j=N+1}^\infty \delta(\kappa_j), \qquad n = p^{\kappa} \geq 1.
$$
Let $\han(\kappa)=1/(2(\kappa+2))$; the norm of the Hankel matrix $H_1(\han)$ is $\|H_1(\han)\|_{B(\ell^2(\bbN))} = \pi/2$ \cite{Mag51}. 
Also, let $H_1(\delta)$ be the trivial Hankel matrix, corresponding to the Kronecker sequence $\delta$. The norm of $H_1(\delta)$ is $1$. 
Since $\hel_N$ is a multiplicative sequence satisfying $\{\hel_N(q^{j+k})\}_{j,k=0}^\infty = H_1(\beta)$ for the first $N$ primes $q$, and $\{\hel_N(q^{j+k})\}_{j,k=0}^\infty = H_1(\delta)$ for primes $q > p_N$, it follows that the Helson matrix $M(\hel_N)$ decomposes into the tensor product
$$
M(\hel_N) 
= 
\underbrace{H_1(\han) \otimes H_1(\han) \otimes \cdots \otimes H_1(\han)}_{N \; \mathrm{factors}} \otimes H_1(\delta) \otimes H_1(\delta) \otimes \cdots. 
$$
We conclude that
$$\|M(\hel_N)\|_{B(\ell^2(\bbN))} = \left(\frac{\pi}{2}\right)^N,$$
see \cite[Lemma 2]{BP15}. On the other hand, a straightforward calculation shows that
$$\int_0^1 \frac{1}{(1-st)^2} \, d\nu(t) = \frac{1}{2s^2}\left(\log(1-s) + \frac{1}{1-s} - 1\right) \leq \frac{1}{1-s^2} \quad s \in (0,1),$$
from which it is clear that $\|\mu_N\|_X = 1$. Hence
$$\lim_{N \to \infty} \frac{\|M(\hel_N)\|_{B(\ell^2(\bbN))}}{\|\mu_N\|_X} = \infty,$$
finishing the proof. 
\end{proof}

\bibliographystyle{amsplain} 
\bibliography{moment2} 

\end{document}